\documentclass[11pt]{amsart}

\usepackage[T1]{fontenc}
\usepackage{lmodern}
\usepackage{amsmath,amssymb,mathtools}
\usepackage{graphicx}
\usepackage{float}
\usepackage{flafter}
\usepackage{booktabs}
\usepackage{array}
\usepackage{longtable}
\usepackage{enumitem}
\usepackage{needspace}
\usepackage[hidelinks]{hyperref}
\usepackage[nameinlink,noabbrev]{cleveref}

\graphicspath{{figures/cropped/}{media/media/}}
\numberwithin{equation}{section}

\newtheorem{theorem}{Theorem}[section]
\newtheorem{proposition}[theorem]{Proposition}
\newtheorem{lemma}[theorem]{Lemma}
\newtheorem{corollary}[theorem]{Corollary}
\newtheorem*{lemma*}{Lemma}
\newtheorem*{corollary*}{Corollary}
\theoremstyle{definition}
\newtheorem{definition}[theorem]{Definition}
\theoremstyle{remark}
\newtheorem{remark}[theorem]{Remark}
\newtheorem*{remark*}{Remark}

\newcommand{\TV}{\operatorname{TV}}
\newcommand{\Adm}{\operatorname{Adm}}
\newcommand{\Cross}{\operatorname{Cross}}
\newcommand{\Col}{\operatorname{Col}}
\newcommand{\Tet}{\operatorname{Tet}}

\title[Reconstructing Turaev--Viro invariants]{Reconstructing Turaev--Viro Invariants from Four-Fold Simple Branched Covering Presentations}
\author{Eri Hatakenaka}
\date{}

\begin{document}
\raggedbottom

\begin{abstract}
We construct a finite state sum from a link diagram labelled by transpositions.  The diagram presents a closed $3$-manifold as a four-fold simple branched cover, and the state sum equals the $\mathrm{SU}(2)$-type Turaev--Viro invariant of that manifold.  For each admissible colouring of the diagram's edges and complementary regions, a global factor is combined with crossing weights obtained by summing over compatible local colourings.  We then sum these products over all diagram colourings.  The formula is proved by assembling crossing-wise local triangulations and matching the two state sums.
\end{abstract}

\maketitle

\nocite{hatakenaka-2010,bobtcheva-piergallini-2004,turaev-viro-1992,
  burton-maria-spreer-2018,roberts-1995,kauffman-lins-1994,
  turaev-1992-shadow,carbone-2000,benedetti-petronio-1995,
  benedetti-petronio-1996,hatakenaka-nosaka-2012,nosaka-2014,
  ishii-oshiro-2022,rourke-sanderson-1982,lackenby-2021}
\section{Introduction}\label{sec:introduction}

This paper constructs a finite state sum that computes the Turaev--Viro invariant directly from a transposition-labelled link diagram presenting a four-fold simple branched cover.  It combines global diagram colourings with finite local sums at the crossings.  A closed generalized triangulation $K(D)$, assembled from crossing-wise pullbacks in the second half of the paper, is used only to prove agreement with the Turaev--Viro state sum and is not needed to define the diagrammatic state sum.

The Turaev--Viro invariant is an invariant of closed $3$-manifolds defined from edge colourings of a triangulation and quantum $6j$-symbols~\cite{turaev-viro-1992}.  We work with the $\mathrm{SU}(2)$-type Turaev--Viro invariant associated with $U_q(\mathfrak{sl}_2)$, with the conventions specified in Section~5.  On the other hand, every closed oriented $3$-manifold can be represented as a four-fold simple branched cover of $S^3$ whose branch set is a link labelled by transpositions.  We refer to a presentation by such a transposition-labelled link diagram as a \emph{covering presentation}.  The covering moves MI and MII in \cref{fig:covering-moves} preserve the oriented covering manifold represented by a four-fold simple branched covering presentation~\cite[Figure~1]{bobtcheva-piergallini-2004}.  Labelled isotopy is represented diagrammatically by labelled Reidemeister moves~\cite[Section~1]{bobtcheva-piergallini-2004}.  Here a \emph{labelled Reidemeister move} means an ordinary Reidemeister move in which the transposition labels on the strand segments in the move disk are matched according to the Wirtinger relations.

\begin{figure}[H]
  \centering
  \includegraphics[width=.63\textwidth]{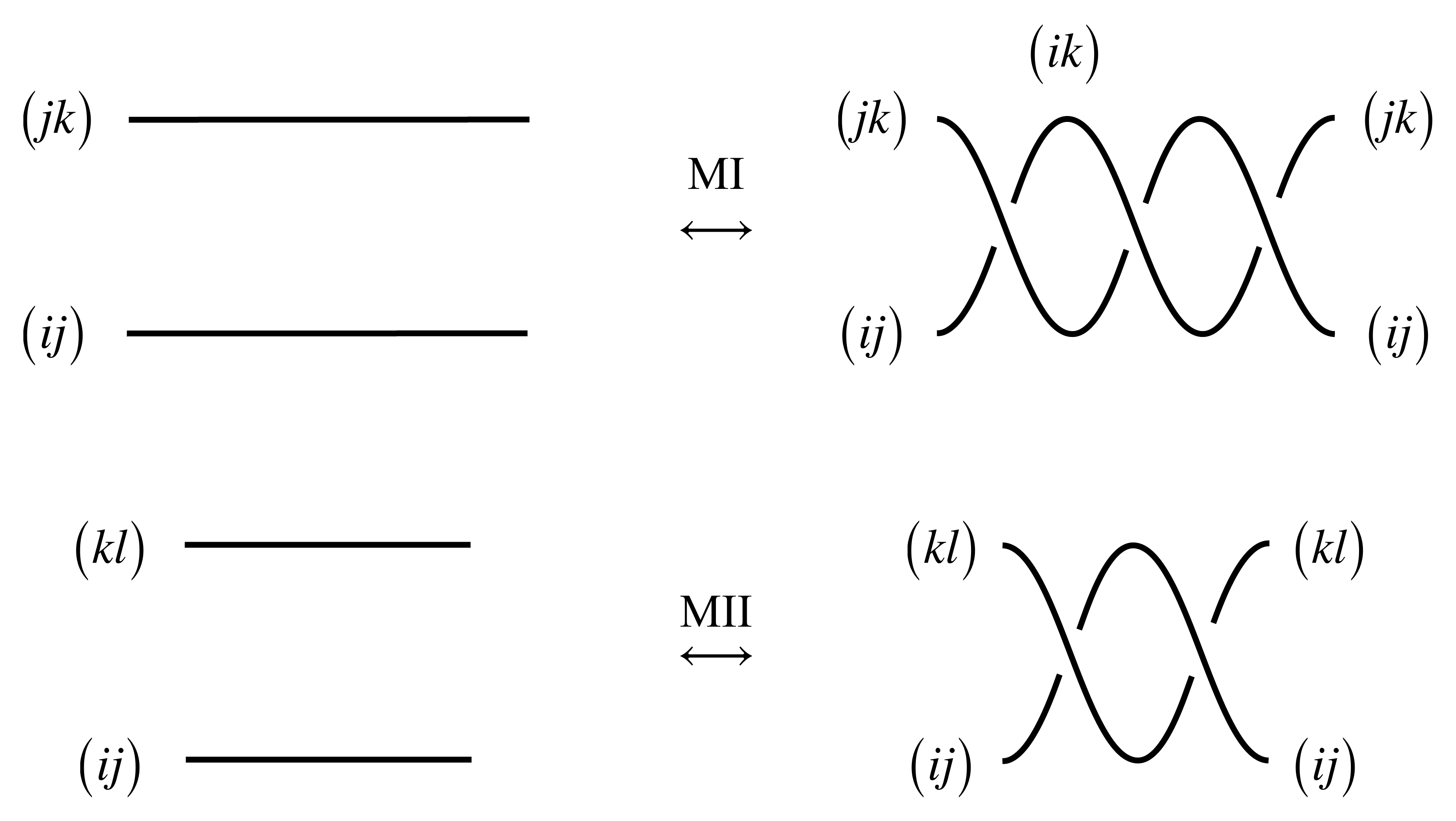}
  \caption{Covering moves MI and MII~\cite[Figure~1]{bobtcheva-piergallini-2004}.  In MI, the indices $i,j,k$ are distinct; in MII, $\{i,j\}\cap\{k,l\}=\varnothing$.  In both moves the labelled tangles agree outside the displayed $3$-ball.}
  \label{fig:covering-moves}
\end{figure}

Hatakenaka~\cite{hatakenaka-2010} introduced a framework for state sums based on covering presentations, applied it to the Dijkgraaf--Witten invariant, and associated a banana $3$-ball with each crossing.  The present construction starts from that setting but does not simply substitute Turaev--Viro weights into the map $R$ and weight $X$ of~\cite{hatakenaka-2010}.  Instead, it specifies the local triangulations, boundary labels, ordered face pairings, and two levels of summation required by the quantum $6j$-symbols.  A first-stage admissible colouring $\chi$ fixes the colours on the whole diagram; the compatible local colourings at each crossing $c$ are then summed to give $W_c(\chi)$.  Definition~4.2 multiplies these crossing weights by the global factor $W_D(\chi)$ and sums over $\chi$ to define $Z_{r,s}(D)$.  Proposition~4.3 gives the equivalent simultaneous expansion, and Section~8.3 applies the construction to $L(2,1)$.

Known descriptions of the Turaev--Viro invariant use skein theory and chain-mail links~\cite{roberts-1995}, recoupling theory and special spines~\cite{kauffman-lins-1994}, shadows~\cite{turaev-1992-shadow}, surgery or Heegaard presentations~\cite{carbone-2000}, or $o$-graphs~\cite{benedetti-petronio-1995,benedetti-petronio-1996}.  Our state sum instead begins directly with the transposition-labelled covering presentation.  Related constructions from branched covers and diagram colourings appear in~\cite{hatakenaka-nosaka-2012,nosaka-2014,ishii-oshiro-2022}, but use different colourings and weights.

Throughout the paper, $M(D)$ denotes the closed $3$-manifold represented by the labelled diagram $D$, $\TV_{r,s}(M(D))$ the Turaev--Viro invariant with the conventions of Section~5, and $Z_{r,s}(D)$ the diagrammatic state sum defined in Sections~4 and~8.

\begin{theorem}[Reconstruction theorem]\label{thm:main}
Let $D$ be a diagram of a four-fold simple branched covering presentation whose monodromy action is transitive.  If the projection graph $\Gamma_D$ is connected, define $Z_{r,s}(D)$ as in Section~4; if it is disconnected, use the connectedization described in Section~8.  Then, for every Turaev--Viro parameter $(r,s)$ specified in Section~3.3, the diagrammatic state sum is equal to the Turaev--Viro invariant of the manifold represented by $D$:
\[
  Z_{r,s}(D)=\TV_{r,s}(M(D)).
\]
\end{theorem}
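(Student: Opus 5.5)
The plan is to realize the diagrammatic state sum as a regrouping of the Turaev--Viro state sum on one explicit closed generalized triangulation $K(D)$ of $M(D)$. Assume first that $\Gamma_D$ is connected. The construction of $K(D)$ is crossing-wise. Over each crossing $c$ the banana $3$-ball $B_c$ of \cite{hatakenaka-2010} is subdivided by a fixed local triangulation. Its four-fold simple branched cover, determined by the labels of the strands at $c$, is pulled back and triangulated. The boundary faces of these local pieces are labelled by the edges and complementary regions of $D$ adjacent to $c$, together with the sheet indices $1,\dots,4$. The pieces are then glued along the ordered face pairings dictated by the edges of $\Gamma_D$ and by the regions. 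The first step is to show that the quotient $K(D)$ is a closed generalized triangulation whose underlying space is homeomorphic to $M(D)$. For this I would exhibit $S^3$ as the union of the balls $B_c$, which needs $\Gamma_D$ connected so that the balls and the region cells fill $S^3$. Transitivity of the monodromy makes the lifted complex connected. Checking that the face pairings are orientation-compatible and that the links of vertices are spheres (or that the ideal or vertex contributions are handled by the generalized-triangulation version of TV) then gives a triangulation of the branched cover.

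The second step is the identification of states. Each edge of $K(D)$ is of one of two kinds. Either it lies over an edge or region of $D$ shared between crossings (a \emph{global} edge), or it is interior to a single pulled-back ball (a \emph{local} edge). I would show that an admissible TV colouring of $K(D)$ restricts on global edges to exactly a first-stage admissible colouring $\chi$, and that, given $\chi$, the local colourings at distinct crossings are independent and range over the compatible local colourings at $c$. TV admissibility is local to faces, and each face lies in one piece or on a shared boundary coloured by $\chi$. The TV weight factors as a product over tetrahedra, which are grouped by crossing, times edge weights $\prod_e \dim_q(\text{colour})$ and the global normalization $w^{-2V}$. The edge weights of local edges and the tetrahedron $6j$-symbols in $B_c$ are absorbed into $W_c(\chi)$. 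The edge weights of global edges, which must each be counted exactly once despite appearing in several pieces, together with the normalization power, form $W_D(\chi)$. Summing first over local colourings and then over $\chi$ gives exactly Definition~4.2. Proposition~4.3, the simultaneous expansion, is precisely this unfactored TV sum, so it can be cited for the reorganization.

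The disconnected case is handled as follows. The connectedization of Section~8 modifies $D$ by labelled Reidemeister moves, or by adding trivially labelled crossings, to obtain $D'$ with $\Gamma_{D'}$ connected and $M(D')\cong M(D)$. The theorem for $D'$ together with the topological invariance of $\TV_{r,s}$ then finishes the proof. Independence of the choices made in the construction also follows from invariance of TV, so no move-by-move invariance of $Z_{r,s}$ needs to be checked.

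The main obstacle is the first step: verifying that the glued complex $K(D)$ really is a triangulation of $M(D)$, and computing its vertex count for the normalization. In particular, the vertex count must make the global factor $W_D(\chi)$ depend only on $D$ and $\chi$. I would attack this by computing an Euler-characteristic or vertex count explicitly in terms of crossings, edges and regions of $D$. I would also check the gluing locally near the preimages of each edge and region cell, using the Wirtinger compatibility of labels to see that the sheets match around every edge of the branch set.
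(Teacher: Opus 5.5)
Your proposal follows essentially the paper's route: glue crossing-wise lifted banana triangulations along the lifted boundary triangles over graph edges and regions to form $K(D)$, identify $|K(D)|$ with $M(D)$ through the pullback structure, biject $\Adm(K(D))$ with pairs $(\chi,\{\beta_c\})$, split the weights into crossing-local and global parts, use the vertex count $8+3m+2n=8+8n$ for $\eta_{r,s}^{N(D)}$, and handle disconnected $\Gamma_D$ by Reidemeister~II connectedization plus Turaev--Viro invariance. One bookkeeping correction is needed: with the conventions of Section~5 the state sum also carries triangle weights $\Theta$, and $\eta_{r,s}^{N(D)}$ is a separate prefactor rather than part of $W_D(\chi)$; the $\Theta$ of the interior triangles (the triples in $\mathcal A_t$) belong to $w_c$, and the $\Theta$ of the boundary triangles $(p_i(e),q_{\mu_R(i)}(e),b_i(R))$ belong to $W_D(\chi)$, each exactly once.
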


The proof proceeds as follows.  We place a finite local triangulation at each crossing and glue these triangulations along the graph edges to construct a closed generalized triangulation $K(D)$.  We show that an admissible edge colouring of $K(D)$ decomposes uniquely into a first-stage admissible colouring $\chi$ and one local colouring at each crossing.  We then verify that every edge, triangle, tetrahedron, and vertex factor in the Turaev--Viro state sum occurs exactly once in the diagrammatic state sum.  This gives $Z_{r,s}(D)=\TV_{r,s}(K(D))$.  Finally, a PL homeomorphism identifies $|K(D)|$ with $M(D)$, and the reconstruction theorem follows.  When the projection graph is disconnected, Section~8 connects it by labelled Reidemeister II moves and proves that the result is independent of the choices made in this procedure.

Sections~2--4 define the diagram colourings, crossing weights, and diagrammatic state sum.  Sections~5--7 construct the corresponding generalized triangulation and prove the reconstruction theorem by comparing the two state sums.  Section~8 gives consequences and the calculation for $L(2,1)$, and Section~9 concludes the paper.

\section{Covering diagrams and first-stage admissible colourings}\label{sec:covering-colouring}

This section introduces transposition-labelled link diagrams, the finite colour set, and the first-stage admissible colourings that form the first sum in the diagrammatic state sum.  We begin with the covering presentations and their Wirtinger relations; all data are defined on the planar diagram, and their three-dimensional meaning is explained in Section~6.

\subsection{Four-fold simple branched covering presentations}\label{subsec:covering-presentations}

Let $D$ be a link diagram on $S^2$, and let $\Cross(D)$ denote its set of crossings.  Replace every crossing by a $4$-valent vertex and denote the resulting planar graph by $\Gamma_D$.  Each edge of $\Gamma_D$ corresponds to a crossing-to-crossing segment obtained by removing the crossings from $D$.  We call it a \emph{graph edge} and denote the set of graph edges by $E(D)$.  We write $\mathcal{R}(D)$ for the set of complementary regions of $S^2\setminus\Gamma_D$.  When the two sides of a graph edge $e$ must be distinguished, denote the incident complementary regions by $R_1(e)$ and $R_2(e)$; the numbering is arbitrary, and the two regions may coincide.  See \cref{fig:diagram-graph}.

\begin{figure}[H]
  \centering
  \includegraphics[width=.82\textwidth]{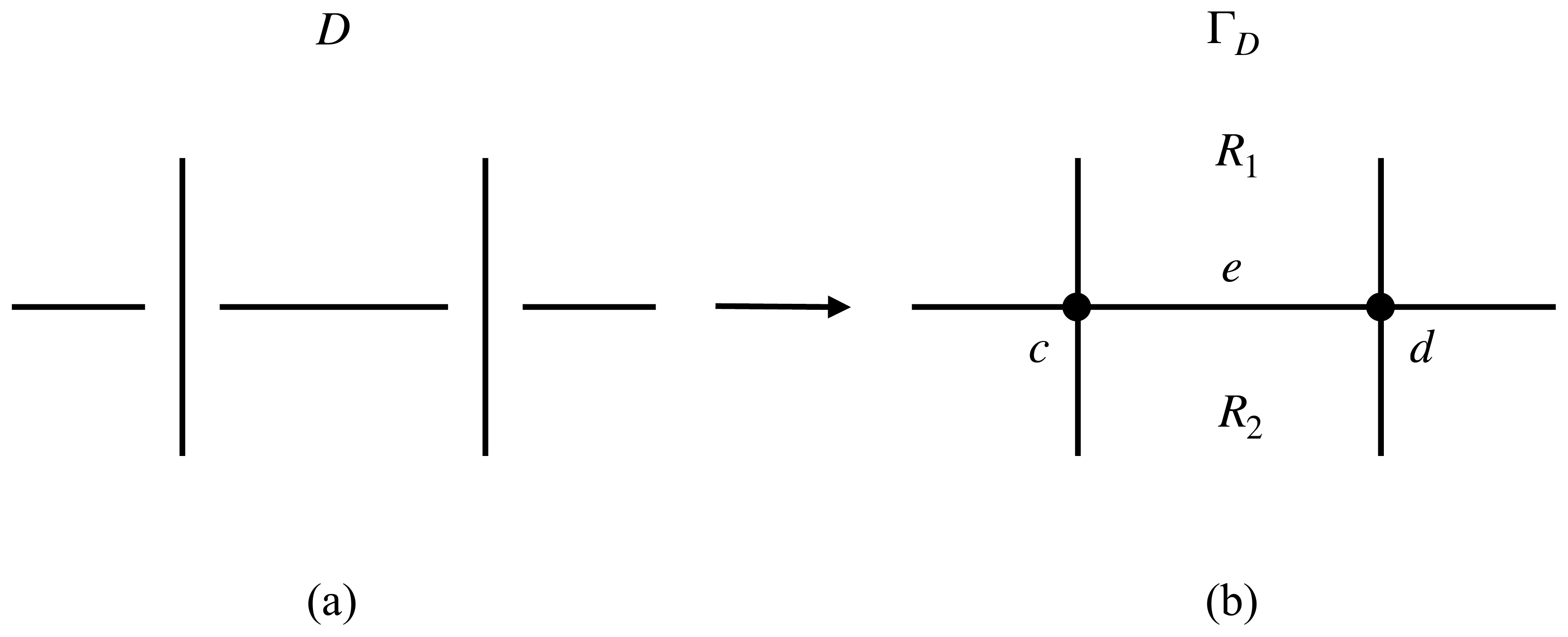}
  \caption{A local part of the link diagram $D$ and the planar graph $\Gamma_D$.  The regions $R_1(e)$ and $R_2(e)$ lie on the two sides of the graph edge $e$.}
  \label{fig:diagram-graph}
\end{figure}

Assign a transposition $\tau_e\in S_4$ to every graph edge $e\in E(D)$.  At a crossing, the two graph edges belonging to the over-strand carry the same transposition $\sigma$.  If one side of the under-strand carries $\tau$, the Wirtinger relation requires the other side to carry $\sigma\tau\sigma^{-1}$; see \cref{fig:wirtinger}.  Since $\sigma$ is a transposition, $\sigma^{-1}=\sigma$, and the latter label may also be written $\sigma\tau\sigma$.  We retain the notation $\sigma\tau\sigma^{-1}$ to display the conjugation in the Wirtinger relation.

\begin{figure}[H]
  \centering
  \includegraphics[width=.38\textwidth]{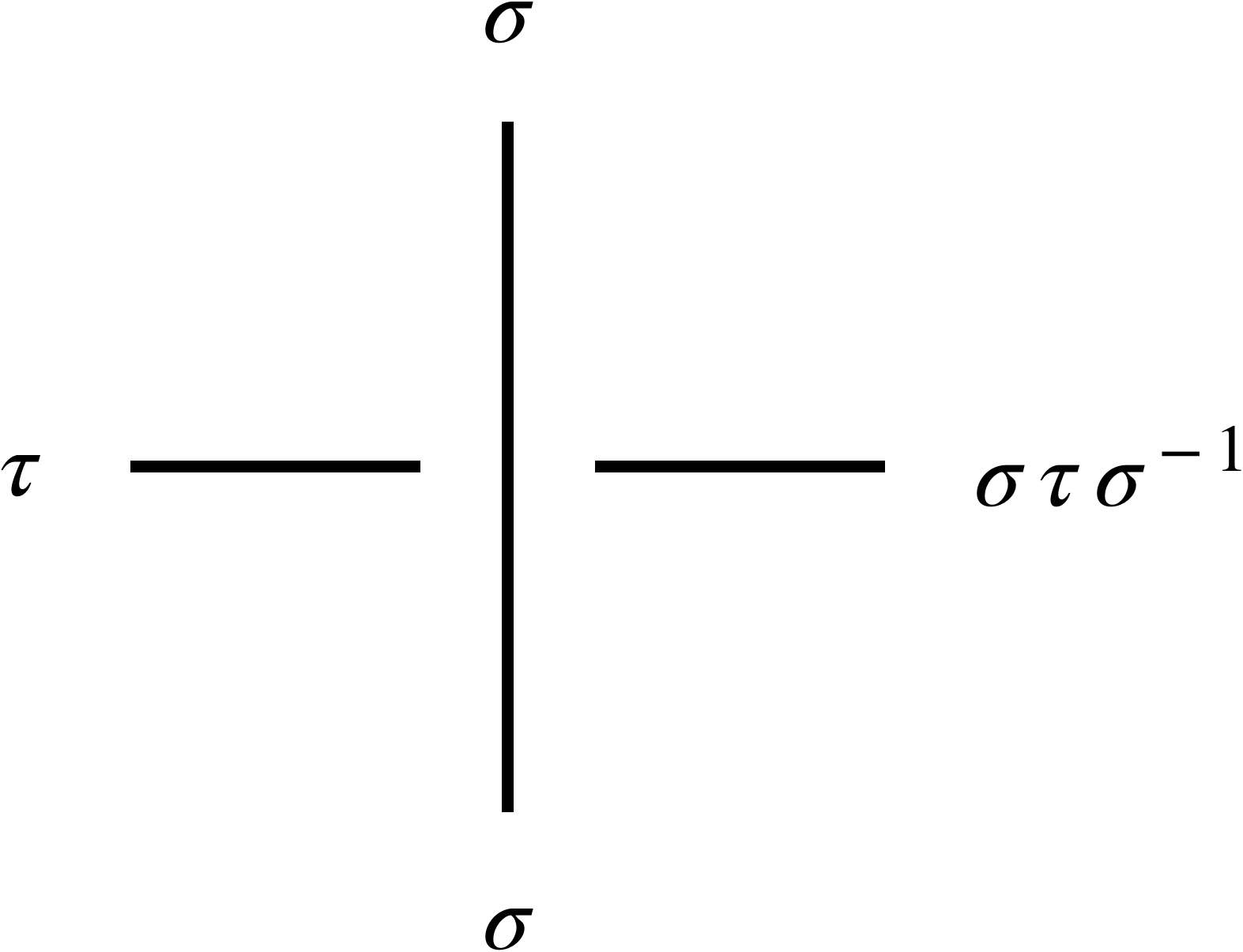}
  \caption{The Wirtinger relation at a labelled crossing.  The two sides of the over-strand are both labelled by $\sigma$.  If one side of the under-strand is labelled by $\tau$, the other is labelled by $\sigma\tau\sigma^{-1}$.}
  \label{fig:wirtinger}
\end{figure}

These conditions determine a monodromy homomorphism from the fundamental group of the link complement to $S_4$, and hence a four-fold simple branched cover
\[
  p_D\colon M(D)\longrightarrow S^3
\]
as in~\cite[Section~2]{hatakenaka-2010}.  We assume that the subgroup $G_D$ generated by the transposition labels acts transitively on $\{1,2,3,4\}$.  This condition is equivalent to the connectedness of $M(D)$.

\subsection{The finite colour set and admissibility}\label{subsec:colour-set}

We next fix the finite colour set used both in the first stage and inside the crossings, together with the admissibility condition for a triple of colours.

Fix an integer $r\geq 3$ and set
\[
  C_r:=\{0,1,\ldots,r-2\}.
\]
We use the doubled-colour convention $x=2\lambda$, where $\lambda$ is the
half-integer label of Turaev--Viro~\cite[Section~7.1]{turaev-viro-1992}; this
is the integer encoding also used in
\cite[Section~2.1]{burton-maria-spreer-2018}.
The second integer $s$ needed for the Turaev--Viro weights is introduced in Section~3.3.  A triple $(x,y,z)\in C_r^3$ is \emph{admissible} if
\[
\begin{gathered}
  x+y+z \equiv 0 \pmod 2,\\
  x\leq y+z,\qquad y\leq z+x,\qquad z\leq x+y,\\
  x+y+z\leq 2r-4.
\end{gathered}
\]

\Needspace{7\baselineskip}
\subsection{State-sum colours and first-stage admissible colourings}\label{subsec:first-stage-colouring}

We now place finitely many colours on the graph edges and complementary regions.  This gives the first-stage admissible colourings before any colours inside the crossings are chosen.

We use $\{1,2,3,4\}$ as two sets of sheet labels.  For every graph edge $e\in E(D)$, assign a colour $p_i(e)$ to the first sheet label $i$ and a colour $q_j(e)$ to the second sheet label $j$.  For every complementary region $R\in\mathcal{R}(D)$, assign a colour $b_i(R)$ to the first sheet label $i$.  The two sets of labels have different roles, but over the base region $R_0$ fixed below, the first sheet label $i$ corresponds to the second sheet label $i$.  Over a general region $R$, the correspondence is recorded by the region transport $\mu_R$ defined below.  These colours are illustrated in \cref{fig:first-stage-colours}.

\begin{figure}[H]
  \centering
  \includegraphics[width=.64\textwidth]{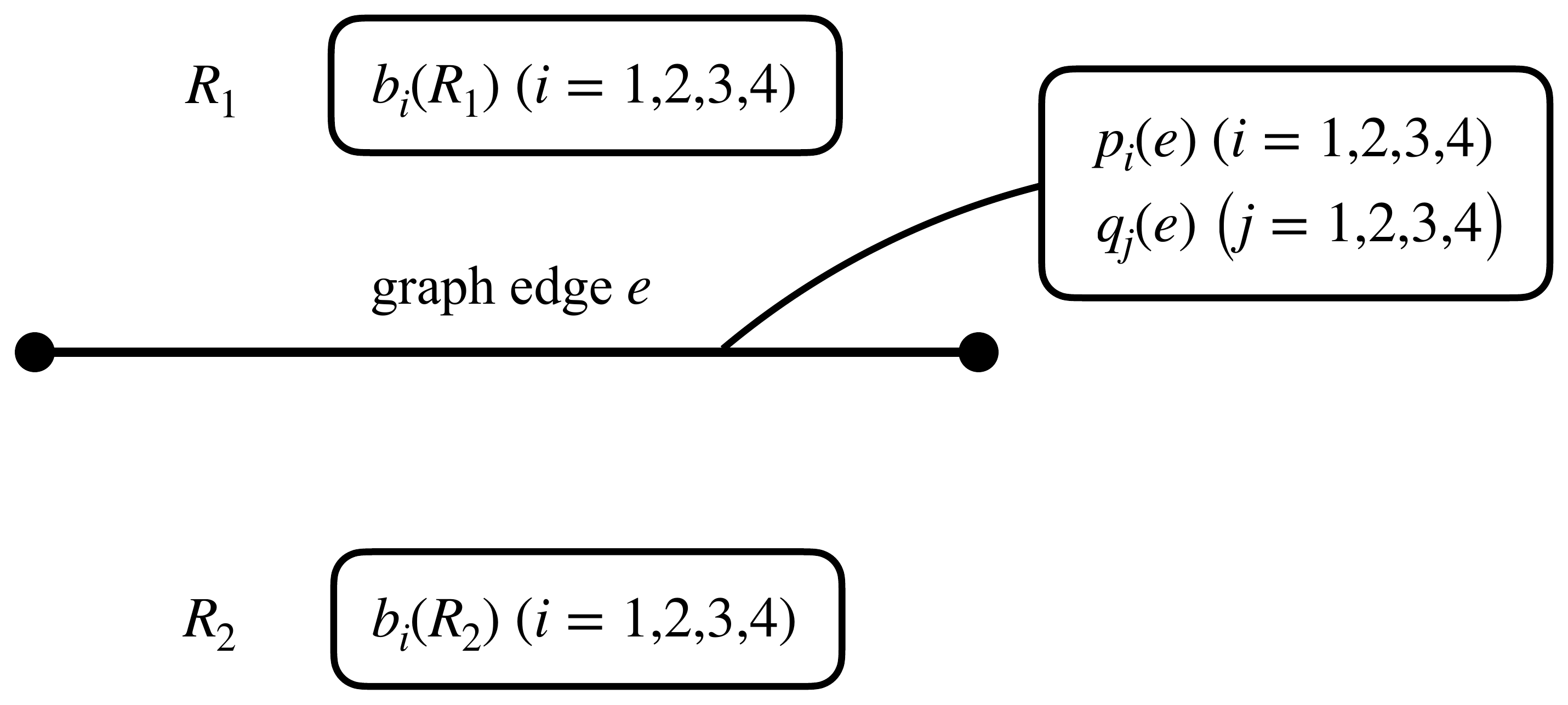}
  \caption{First-stage colours on a graph edge $e$ and its two incident regions.  The indices $i$ and $j$ denote first and second sheet labels, respectively.}
  \label{fig:first-stage-colours}
\end{figure}

Until Section~7, we assume that $\Gamma_D$ is connected.  The disconnected case is treated in Section~8 by applying the following definitions to a connectedized diagram.  Fix one base region $R_0$.  Elements of $S_4$ act as maps on $\{1,2,3,4\}$, and products mean composition of maps, with
\[
  (\alpha\circ\beta)(i)=\alpha\bigl(\beta(i)\bigr).
\]
For $R\in\mathcal{R}(D)$, choose a \emph{dual path} from $R_0$ to $R$, that is, a path in the dual graph of $\Gamma_D$.  If the dual path crosses, in order, the graph edges $e_1,\ldots,e_m$, define
\[
  \mu_R:=\tau_{e_1}\circ\cdots\circ\tau_{e_m}\in S_4,
  \qquad
  \mu_{R_0}:=\mathrm{id}
\]
for the empty dual path.  The same $\tau_e$ is used in either crossing direction because a transposition satisfies $\tau_e^{-1}=\tau_e$.  We use the right-update convention: if the current correspondence is $\mu$ and the dual path is extended across an edge $e$, the new correspondence sends $i$ to $\mu(\tau_e(i))$ and is therefore $\mu\circ\tau_e$.  Thus the displayed factors are written in the order in which the edges are crossed.

\begin{lemma}[Well-definedness of the region transport]\label{lem:region-transport}
The permutation $\mu_R$ defined above is independent of the dual path from $R_0$ to $R$.
\end{lemma}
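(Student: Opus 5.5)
The plan is to show that the assignment $\gamma \mapsto \tau_{e_1}\circ\cdots\circ\tau_{e_m}$ on dual paths $\gamma$ is invariant under the elementary moves that connect any two dual paths with the same endpoints. Concretely, the dual graph $\Gamma_D^*$ is a cellular graph on $S^2$ whose faces correspond to the vertices of $\Gamma_D$, i.e.\ the crossings of $D$. Since $S^2$ is simply connected, any two dual paths from $R_0$ to $R$ are related by a finite sequence of two kinds of moves. The first kind inserts or deletes a backtrack $e\,e$, crossing the same edge twice in succession. The second kind inserts or deletes the boundary loop of a face of $\Gamma_D^*$, i.e.\ a small loop encircling a single crossing. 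Here I would use the standard fact that $\pi_1$ of the $2$-complex $S^2$ with cell structure dual to $\Gamma_D$ is trivial, combined with the usual combinatorial presentation of $\pi_1$ of a $2$-complex. Connectedness of $\Gamma_D$ guarantees that $S^2\setminus\Gamma_D$ regions are discs, so the dual cell structure is a genuine CW decomposition of $S^2$.

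For the backtrack move, the product changes by the factor $\tau_e\circ\tau_e=\mathrm{id}$, since $\tau_e$ is a transposition. Under the right-update convention the inserted factor sits in the middle of the product, so the product is unchanged.

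For the face move, it suffices to show that a loop around a crossing, starting in any of the (up to four) incident regions, has product $\mathrm{id}$. A product of the form $\alpha\circ(\text{loop})\circ\beta$ is then unchanged, and a cyclic rotation of a product equal to the identity is again the identity, so the choice of starting region does not matter. Label the crossing's edges in cyclic order: the over-strand halves both carry $\sigma$, and the under-strand halves carry $\tau$ and $\sigma\tau\sigma^{-1}$. Going around, the edges alternate between over and under, so the loop product is $\sigma\circ\tau\circ\sigma\circ(\sigma\tau\sigma^{-1})$ in some cyclic order. Using $\sigma^{-1}=\sigma$ and $\tau^2=\mathrm{id}$, this reduces to $\sigma\tau\sigma\sigma\tau\sigma=\sigma\tau\tau\sigma=\mathrm{id}$. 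Any cyclic rotation or reversal of this word is conjugate to it or to its inverse, so it is also trivial. This is exactly the Wirtinger relation.

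The main obstacle is the topological step: justifying that backtracks and face loops generate all homotopies of dual paths, and handling degenerate cases. One such case is a region appearing on both sides of an edge ($R_1(e)=R_2(e)$), which gives a dual loop edge. Another is a face boundary that repeats edges. I would handle this by working with the dual CW structure directly: the face at each crossing is attached along a word of length $4$, which may contain repeated letters. The Wirtinger computation above applies to the word as written, without needing the four edges to be distinct.
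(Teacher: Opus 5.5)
Your proposal is correct and follows essentially the paper's proof. The paper likewise checks that the small Wirtinger loop around each crossing has trivial product, then uses simple connectivity of $S^2$ to reduce any closed dual path to such loops. Your explicit handling of backtracks, cyclic rotations and reversals of the face word, and repeated edges (loop edges, $R_1(e)=R_2(e)$) in the dual CW structure simply makes precise what the paper's one-line contraction argument leaves implicit.
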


\begin{proof}
Draw $\Gamma_D$ on $S^2$ and consider a small closed dual path around a crossing.  Let $\sigma$ be the over-strand label and $\tau$ one of the under-strand labels; the other under-strand label is $\sigma\tau\sigma^{-1}$ by the Wirtinger relation.  In one direction, the closed dual path crosses edges labelled, in order,
\[
  \sigma,\quad \tau,\quad \sigma,\quad \sigma\tau\sigma^{-1}.
\]
Since $\sigma$ and $\tau$ are transpositions, $\sigma^2=\tau^2=\mathrm{id}$, and
\[
  \sigma\circ\tau\circ\sigma\circ
  (\sigma\circ\tau\circ\sigma^{-1})=\mathrm{id}.
\]
The product in the opposite direction is also the identity.  Because $S^2$ is simply connected, every closed dual path can be contracted using these small closed paths around the crossings, so its product is the identity.  Joining one dual path from $R_0$ to $R$ to the reverse of another gives a closed path.  The two paths therefore determine the same product.
\end{proof}

For a region $R$, write
\[
  j:=\mu_R(i)
\]
for the second sheet label corresponding to the first sheet label $i$.  In particular, $j=i$ over $R_0$.  The three edge colours of the same lifted triangle are
\[
  p_i(e),\qquad q_j(e),\qquad b_i(R).
\]
After the lifted boundary triangles have been introduced, Section~6.5 establishes the geometric meaning of the correspondence $j=\mu_R(i)$ and hence of the possibly different indices on $p_i(e)$ and $q_j(e)$.

\begin{definition}[First-stage admissible colouring]\label{def:first-stage-colouring}
A colouring $\chi$ assigns a colour in $C_r$ to $p_i(e)$ for every $e\in E(D)$ and $i\in\{1,2,3,4\}$, to $q_j(e)$ for every $e\in E(D)$ and $j\in\{1,2,3,4\}$, and to $b_i(R)$ for every $R\in\mathcal{R}(D)$ and $i\in\{1,2,3,4\}$.

Let $R$ be the complementary region incident to one side of a graph edge $e$.  For $i\in\{1,2,3,4\}$, put $j:=\mu_R(i)$ and associate with this side the triple
\[
  \bigl(p_i(e),q_j(e),b_i(R)\bigr).
\]
The region $R$ is chosen separately for each side of $e$.  Hence, if the same region is incident to several graph edges, the four triples indexed by $i=1,2,3,4$ are considered separately for each incident graph edge.  The colouring $\chi$ is \emph{first-stage admissible} if the displayed triple is admissible for both sides of every graph edge, for the region incident to each side, and for every $i\in\{1,2,3,4\}$.  We denote the set of first-stage admissible colourings by $\Adm_1(D;r)$.
\end{definition}

\begin{corollary}[Change of base region]\label{cor:base-region}
When the base region is to be displayed, write the set in \cref{def:first-stage-colouring} as $\Adm_1^{R_0}(D;r)$.  Change the base region from $R_0$ to $R_0'$, let $\mu_R'$ be the resulting region transports, and put $\kappa:=\mu_{R_0'}$, where the right-hand side is computed from the old base region $R_0$.  Then
\[
  \mu_R'=\kappa^{-1}\circ\mu_R
\]
for every $R$.  Moreover, the assignments
\[
  p_i'(e):=p_i(e),\qquad
  b_i'(R):=b_i(R),\qquad
  q_j'(e):=q_{\kappa(j)}(e)
\]
define a bijection
\[
  \Adm_1^{R_0}(D;r)\longrightarrow \Adm_1^{R_0'}(D;r).
\]
\end{corollary}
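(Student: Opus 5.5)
The plan has two parts: the transformation law for the region transports, and then the bijection on colourings.

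For the first claim, I will use \cref{lem:region-transport} to compute $\mu_R$ along a convenient dual path. Fix a region $R$. Choose a dual path $\gamma$ from $R_0$ to $R_0'$ crossing edges $e_1,\ldots,e_m$, and a dual path $\delta$ from $R_0'$ to $R$ crossing $f_1,\ldots,f_n$. Then $\mu_R'=\tau_{f_1}\circ\cdots\circ\tau_{f_n}$, because $\delta$ is a dual path from the new base region. The concatenation $\gamma\delta$ is a dual path from $R_0$ to $R$, so the lemma gives
\[
  \mu_R=\tau_{e_1}\circ\cdots\circ\tau_{e_m}\circ\tau_{f_1}\circ\cdots\circ\tau_{f_n}=\kappa\circ\mu_R'.
\]
Hence $\mu_R'=\kappa^{-1}\circ\mu_R$. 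The only care needed is the right-update order convention. Concatenating paths appends factors on the right, which is exactly why $\kappa$ appears on the left. The case $R=R_0'$ gives $\mu'_{R_0'}=\mathrm{id}$, which is a consistency check.

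For the bijection, the key step is to compare the triples. Fix an edge $e$, a side with incident region $R$, and a first sheet label $i$. In the new convention the associated triple is $(p_i'(e),q'_{j'}(e),b_i'(R))$ with $j'=\mu_R'(i)=\kappa^{-1}(\mu_R(i))$. Substituting the definitions gives
\[
  q'_{j'}(e)=q_{\kappa(j')}(e)=q_{\mu_R(i)}(e)=q_j(e),
\]
where $j=\mu_R(i)$ is the old second sheet label. Thus the new triple for $(e,\text{side},i)$ is literally equal to the old triple $(p_i(e),q_j(e),b_i(R))$. Admissibility of a triple depends only on its three values. Therefore $\chi$ satisfies all the conditions of \cref{def:first-stage-colouring} for $R_0$ if and only if its image satisfies them for $R_0'$. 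So the map sends $\Adm_1^{R_0}(D;r)$ into $\Adm_1^{R_0'}(D;r)$.

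Finally, the map is a bijection already at the level of all colourings, since it merely reindexes the $q$-colours by the permutation $\kappa$ of $\{1,2,3,4\}$. Its inverse is $q_j(e)=q'_{\kappa^{-1}(j)}(e)$, with the $p$- and $b$-colours left unchanged. The triple comparison above shows that the inverse also preserves admissibility, so the restriction is a bijection. I expect no real obstacle here. The only delicate point is the bookkeeping of composition order in the first step, together with the fact that $\kappa$ is computed from the old base.
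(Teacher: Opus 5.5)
Your proposal is correct and follows the paper's proof essentially step for step. You concatenate a dual path from $R_0$ to $R_0'$ with one from $R_0'$ to $R$ and use \cref{lem:region-transport} to get $\mu_R=\kappa\circ\mu_R'$; the identity $q'_{\mu_R'(i)}(e)=q_{\mu_R(i)}(e)$ then makes the old and new triples coincide, and the inverse is given by reindexing the $q$-colours by $\kappa^{-1}$.
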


\begin{proof}
Concatenate a dual path from the old base region $R_0$ to $R_0'$ with a dual path from the new base region $R_0'$ to $R$.  By \cref{lem:region-transport},
\[
  \mu_R=\kappa\circ\mu_R',
  \qquad\text{and hence}\qquad
  \mu_R'=\kappa^{-1}\circ\mu_R.
\]
For every $i$, it follows that
\[
  q'_{\mu_R'(i)}(e)
  =q_{\kappa(\mu_R'(i))}(e)
  =q_{\mu_R(i)}(e).
\]
Together with $p_i'(e)=p_i(e)$ and $b_i'(R)=b_i(R)$, this gives
\[
  \bigl(p_i'(e),q'_{\mu_R'(i)}(e),b_i'(R)\bigr)
  =\bigl(p_i(e),q_{\mu_R(i)}(e),b_i(R)\bigr)
\]
for every side of every graph edge and every first sheet label $i$.  The relabelling therefore preserves admissibility and defines the stated bijection.  Its inverse reindexes $q_j(e)$ by $\kappa^{-1}$.
\end{proof}

\section{Crossing weights}\label{sec:crossing-weights}

The crossing tables in Appendix~A determine the local colourings placed at each crossing.  After defining these colourings, we shall introduce the coefficients that evaluate them and form the crossing weight by a finite sum.  Everything in this construction is read from the colours of Section~2.3 and the finite tables; no lifted triangulation is needed for the definition.

\subsection{The three crossing types and the first-stage colours}\label{subsec:crossing-types}

We classify the crossings into three types and introduce notation for referring to the first-stage colours at the four local positions around a crossing.

Let $c$ be a crossing, let $\sigma$ be the transposition on its over-strand, and let $\tau$ be the transposition drawn on the left side of its under-strand in \cref{fig:crossing-types}.  The Wirtinger relation gives $\sigma\tau\sigma^{-1}$ on the other side of the under-strand.  The crossing is of \emph{type 1} if $\sigma=\tau$, of \emph{type 2} if $\sigma$ and $\tau$ have exactly one sheet label in common, and of \emph{type 3} if they have no sheet label in common.  The representative labellings in \cref{fig:crossing-types} use $\sigma=\tau=(12)$ for type 1; $\sigma=(23)$, $\tau=(12)$, and $\sigma\tau\sigma^{-1}=(13)$ for type 2; and $\sigma=(34)$ and $\tau=(12)$ for type 3.

\begin{figure}[H]
  \centering
  \includegraphics[width=.86\textwidth]{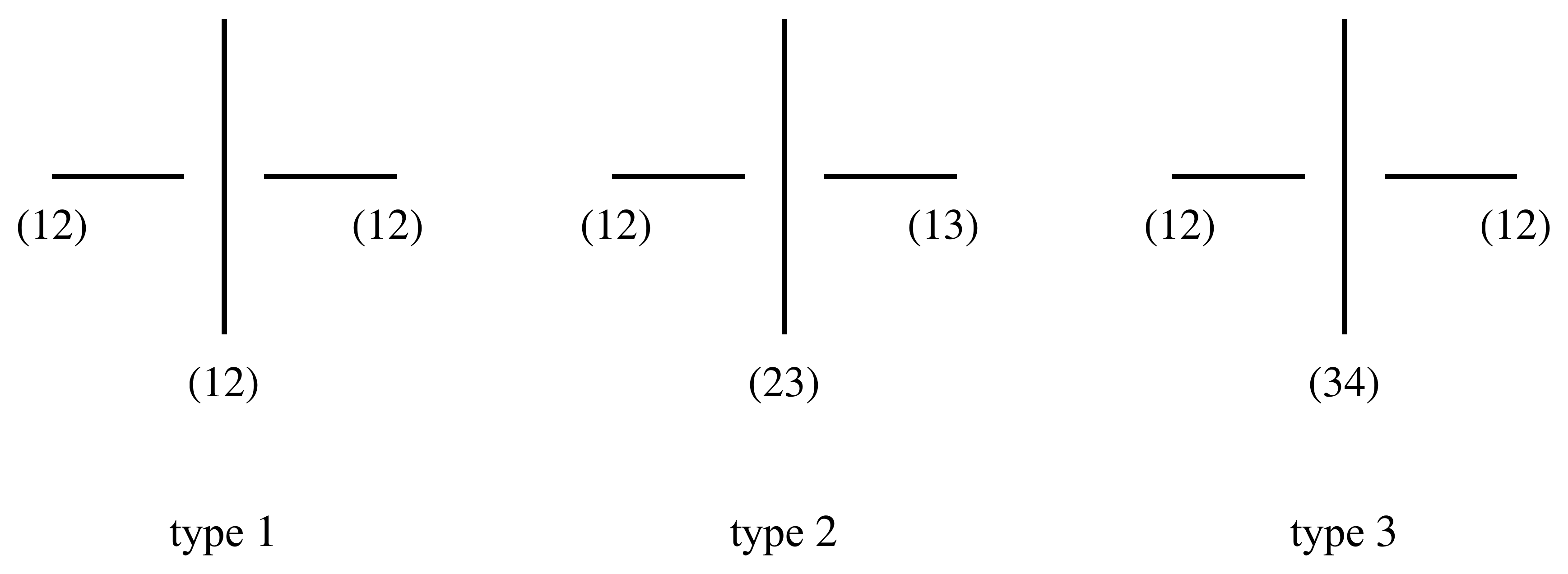}
  \caption{Representative labellings of crossing types 1, 2, and 3.  The vertical strand is the over-strand.}
  \label{fig:crossing-types}
\end{figure}

Each end of a graph edge at its incident crossing is called a \emph{half-edge}.  If both ends of a loop edge meet the same crossing, they are still regarded as distinct half-edges.  No colour is assigned to a half-edge itself.  If a half-edge $h$ is an end of a graph edge $e$, the first-stage colours referred to at $h$ are the colours $p_i(e)$ and $q_j(e)$ assigned to $e$ in Section~2.3.  We denote the set of half-edges by $\mathcal{H}(D)$.

At a crossing $c$, denote the half-edges meeting it from the left, top, right, and bottom by $h_1,h_2,h_3,h_4$, respectively, and denote their graph edges by $e_1,e_2,e_3,e_4$; see \cref{fig:crossing-positions}.  The global complementary regions containing the four local sectors between the consecutive pairs $(h_1,h_2)$, $(h_2,h_3)$, $(h_3,h_4)$, and $(h_4,h_1)$ are denoted by $R_{12},R_{23},R_{34},R_{41}$, respectively.  These subscripts record positions around the crossing.  Thus, even if $e_1=e_4$, the two ends $h_1$ and $h_4$ are distinct and the region between them is still denoted by $R_{41}$.  The same global region may occupy more than one local sector.  For example, if $R_{12}=R_{34}=R$, the same $b_i(R)$ and $\mu_R$ are used at both positions.

\begin{figure}[H]
  \centering
  \includegraphics[width=.70\textwidth]{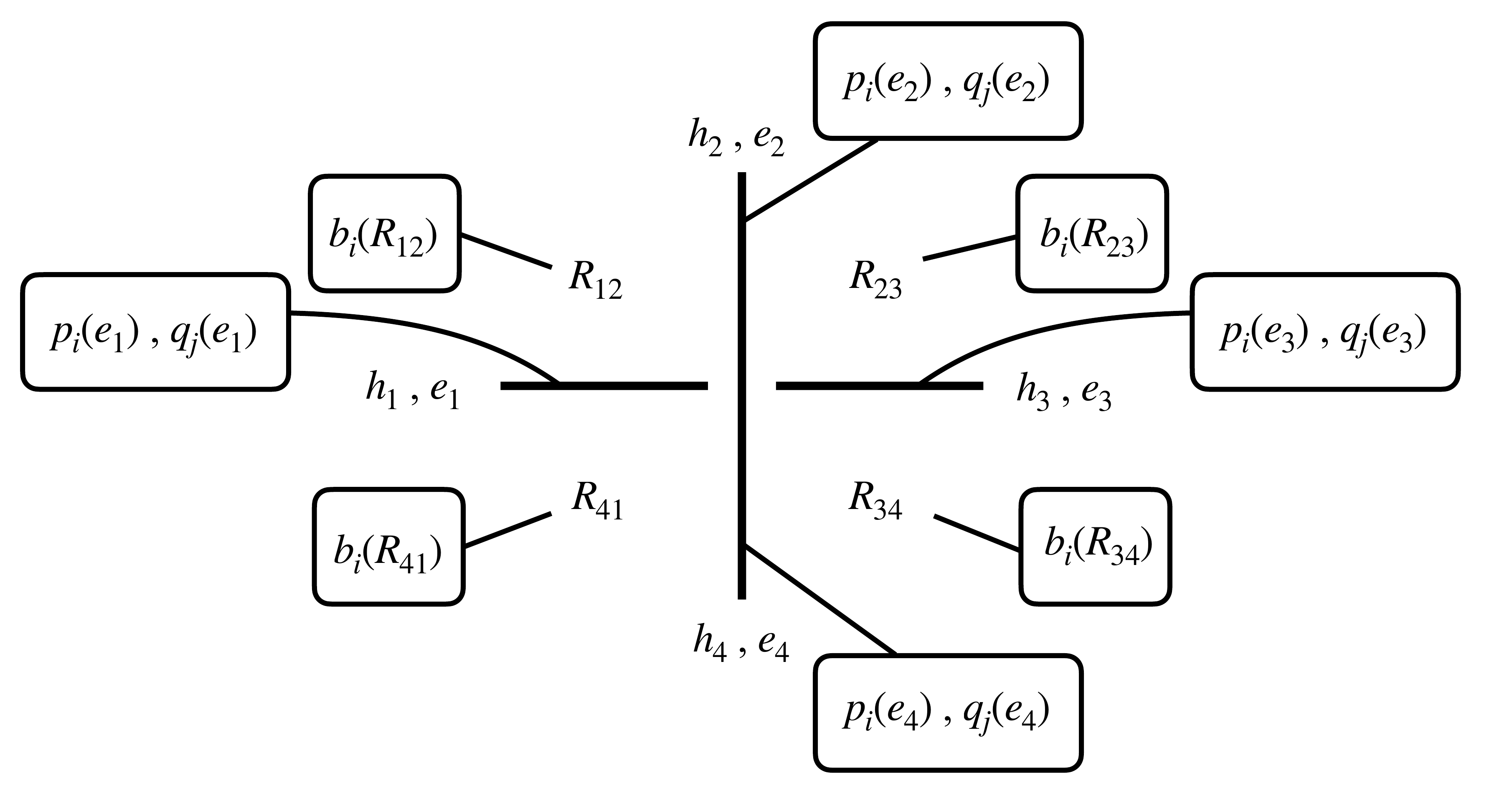}
  \caption{The half-edges $h_1,\ldots,h_4$, graph edges $e_1,\ldots,e_4$, and global complementary regions $R_{12},R_{23},R_{34},R_{41}$ at a crossing.  The displayed first-stage colours are those referred to at the four local positions.}
  \label{fig:crossing-positions}
\end{figure}

\subsection{Local colourings at a crossing}\label{subsec:local-colourings}

Fix a first-stage admissible colouring of a labelled diagram.  We now use the finite tables in Appendix~A to impose the admissibility conditions on the colours inside each crossing and to define the finite set of permitted local colourings.

For a crossing $c$ of type $t\in\{1,2,3\}$, use Table~A.2, A.3, or A.4 in Appendix~A when $t=1,2$, or $3$, respectively.  Write the local colours in the corresponding table as $x_1,\ldots,x_{m_t}$.  As recorded in Table~A.1, $m_1=22$ and $m_2=m_3=20$.

The symbols $p_i(e_k)$, $q_j(e_k)$, and $b_i(R_{12})$, $b_i(R_{23})$, $b_i(R_{34})$, $b_i(R_{41})$ in Tables~A.2--A.4 refer to the first-stage colours on the graph edges and global complementary regions shown in \cref{fig:crossing-positions}.  The colours $x_1,\ldots,x_{m_t}$ are chosen locally at the fixed crossing $c$.  An assignment of colours in $C_r$ to these $m_t$ symbols is called a \emph{local colouring} $\beta_c$ at $c$.

To use a table at $c$, rotate or reflect the representative of the same type in \cref{fig:crossing-types} so that its over-strand and under-strand match those at $c$.  The resulting plane symmetry identifies the four half-edge positions $h_1,h_2,h_3,h_4$, and hence the associated graph edges $e_1,e_2,e_3,e_4$ and complementary regions $R_{12},R_{23},R_{34},R_{41}$ of \cref{fig:crossing-positions}, with the corresponding data at $c$.  At the same time, relabel all four sheet labels uniformly so that the transposition labels agree.  For a symbol $p_i$ or $b_i$ in the table, substitute the first-stage colour whose index is the relabelled first sheet label.

Let $R$ be the global complementary region at $c$ corresponding to $R_{12}$ in the representative.  The convention in Appendix~A identifies the first and second sheet numbers over $R_{12}$.  Suppose that the first sheet label corresponding to an index $j$ in the table is sent by the above relabelling to the actual label $\ell$.  Then substitute $q_{\mu_R(\ell)}$ for $q_j$ and use this rule for every $q$-symbol in the table.  For example, suppose that the type 1 label $(12)$ is matched with the actual label $(23)$ by the relabelling
\[
  1\longmapsto2,\qquad 2\longmapsto3,\qquad
  3\longmapsto1,\qquad 4\longmapsto4,
\]
and that $\mu_R=(23)$.  Then the local table symbols are read at the actual crossing as follows: $p_1(e_k)$ as $p_2(e)$, $b_1(R_{12})$ as $b_2(R)$ for the global complementary region $R$, and $q_1(e_k)$ as
\[
  q_{\mu_R(2)}(e)=q_3(e).
\]
We substitute the colours of $\chi$ and $\beta_c$ into the tables according to these rules.

For a crossing of type $t$, use the triples A1--A52 in Table~A.2(a) when $t=1$, the triples A1--A48 in Table~A.3(a) when $t=2$, and the triples A1--A48 in Table~A.4(a) when $t=3$.  Each list consists of triples formed from the first-stage colours in \cref{fig:crossing-positions} and the local colours $x_k$.  Denote this finite set of triples by $\mathcal{A}_t$.  A local colouring $\beta_c$ is \emph{compatible with $\chi$} if every triple in $\mathcal{A}_t$ is admissible after the colours of $\chi$ and $\beta_c$ have been substituted.  The test is summarized in \cref{fig:local-admissibility}.

We write $\Col_c(\chi)$ for the set of all local colourings at $c$ compatible with $\chi$.  Thus
\[
  \Col_c(\chi)
  =\left\{\beta_c\in C_r^{m_t}\ \middle|\
  \begin{array}{c}
  \text{every triple in $\mathcal{A}_t$ is admissible}\\[-2pt]
  \text{under $\chi$ and $\beta_c$}
  \end{array}
  \right\}.
\]

\begin{figure}[!tbp]
  \centering
  \includegraphics[width=.52\textwidth]{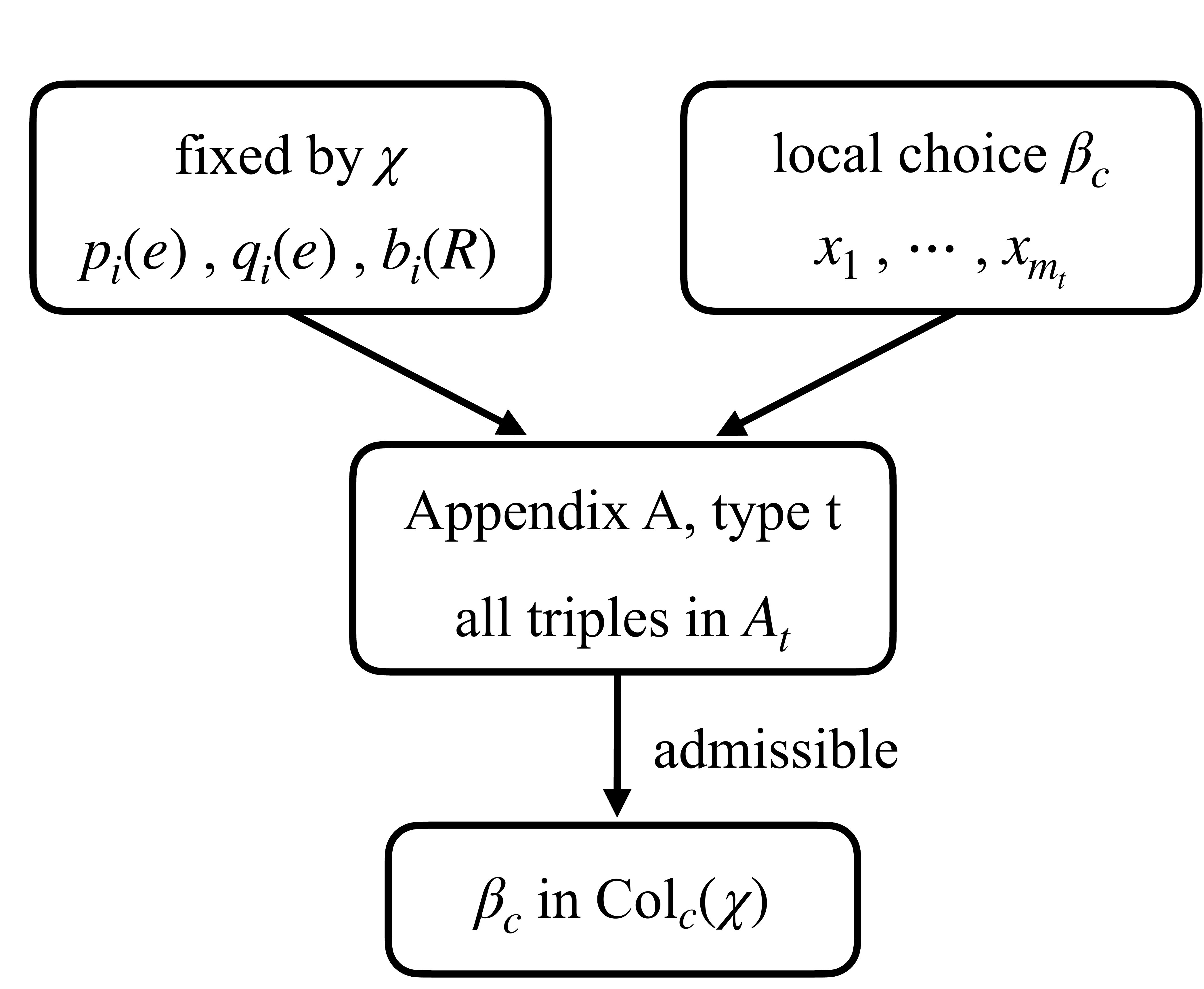}
  \caption{The admissibility test using the first-stage colours and the local colours at a crossing.}
  \label{fig:local-admissibility}
\end{figure}

\Needspace{8\baselineskip}
\begin{definition}[Admissible diagram colouring]\label{def:diagram-colouring}
An admissible diagram colouring of $D$ consists of a first-stage admissible colouring $\chi\in\Adm_1(D;r)$ and a choice of one local colouring $\beta_c\in\Col_c(\chi)$ for every crossing $c\in\Cross(D)$.  We write it as
\[
  \theta=\bigl(\chi,\{\beta_c\mid c\in\Cross(D)\}\bigr)
\]
and denote the set of all such colourings by $\Adm_2(D;r)$.
\end{definition}

\subsection{Algebraic coefficients}\label{subsec:algebraic-coefficients}

We now fix the coefficients $E$, $\Theta$, and $\Tet$ and the normalization factor $\eta_{r,s}$ used in both state sums.

Following the standard Turaev--Viro conventions in \cite[Section~7.1]{turaev-viro-1992} and \cite[Section~2.1]{burton-maria-spreer-2018}, fix integers $r\geq3$ and $s$ such that
\[
  0<s<2r,
  \qquad
  \gcd(r,s)=1.
\]
We continue to use the doubled-colour convention of Section~2.2.  Put
\[
  \zeta:=\exp\!\left(\frac{\pi\sqrt{-1}\,s}{r}\right),
  \qquad
  [n]:=\frac{\zeta^n-\zeta^{-n}}{\zeta-\zeta^{-1}}
  \quad(n\geq1),
\]
and define the quantum factorials by
\[
  [0]!:=1,
  \qquad
  [n]!:=\prod_{k=1}^{n}[k]
  \quad(n\geq1).
\]
The normalization factor is
\[
  \eta_{r,s}:=-\frac{(\zeta-\zeta^{-1})^2}{2r}.
\]
Since $\zeta$ lies on the unit circle, this is also $|\zeta-\zeta^{-1}|^2/(2r)$ and agrees with the convention in \cite[Section~2.1]{burton-maria-spreer-2018}.

Under these assumptions, $\zeta^2$ is a primitive root of unity of order $r$.  Admissibility ensures that the indices of the quantum factorials appearing in the denominators below lie between $0$ and $r-1$.  The denominators are therefore nonzero; see \cite[Section~7.1]{turaev-viro-1992} and \cite[Section~2.1]{burton-maria-spreer-2018}.

For $x\in C_r$, define the edge weight by
\[
  E(x):=(-1)^x[x+1].
\]
For an admissible triple $(x,y,z)$, define the triangle weight by
\[
  \Theta(x,y,z)
  :=(-1)^{(x+y+z)/2}
  \frac{
    \bigl[(x+y-z)/2\bigr]!
    \bigl[(x+z-y)/2\bigr]!
    \bigl[(y+z-x)/2\bigr]!
  }{
    \bigl[(x+y+z)/2+1\bigr]!
  }.
\]
If the triple is not admissible, set $\Theta(x,y,z):=0$.  This convention does not admit such a triple as a state; it merely allows a non-admissible term in a finite sum to be written as zero.

For an ordered six-colour tuple $U=(u_1,\ldots,u_6)$, take the four face triples to be
\[
  (u_1,u_2,u_4),\qquad
  (u_1,u_3,u_5),\qquad
  (u_2,u_3,u_6),\qquad
  (u_4,u_5,u_6).
\]
For this definition, put
\[
\begin{gathered}
  A_1:=\frac{u_1+u_2+u_4}{2},\qquad
  A_2:=\frac{u_1+u_3+u_5}{2},\\
  A_3:=\frac{u_2+u_3+u_6}{2},\qquad
  A_4:=\frac{u_4+u_5+u_6}{2},\\[2pt]
  B_1:=\frac{u_1+u_2+u_5+u_6}{2},\qquad
  B_2:=\frac{u_1+u_3+u_4+u_6}{2},\\
  B_3:=\frac{u_2+u_3+u_4+u_5}{2}.
\end{gathered}
\]
If all four face triples are admissible, define the tetrahedron weight by
\[
  \Tet(U)
  :=\sum_{\ell=\max\{A_1,A_2,A_3,A_4\}}^{\min\{B_1,B_2,B_3\}}
  \frac{(-1)^\ell[\ell+1]!}
  {\displaystyle
    \prod_{j=1}^{4}[\ell-A_j]!
    \prod_{k=1}^{3}[B_k-\ell]!}.
\]
If any of the four face triples is not admissible, set $\Tet(U):=0$.

\subsection{Local contribution at a crossing}\label{subsec:crossing-local-contribution}

For each local colouring at a crossing, we now combine the coefficients of Section~3.3 according to the finite crossing tables.  The resulting finite product is the local contribution that will be summed over all compatible local colourings in Section~3.5.

Recall that $\mathcal{A}_t$ is the finite set of admissibility triples from the relevant part~(a) of Appendix~A.  Let $\mathcal{S}_t$ be the finite set of ordered six-colour tuples listed in the corresponding part~(b).  In every row of part~(b), the six colours are read in the column order
\[
  01,\ 02,\ 03,\ 12,\ 13,\ 23
\]
and form an ordered tuple $U=(u_1,\ldots,u_6)$ for the tetrahedron weight $\Tet(U)$.

\begin{definition}[Local contribution at a crossing]\label{def:crossing-local-contribution}
Fix a first-stage admissible colouring $\chi\in\Adm_1(D;r)$, a crossing $c$ of type $t$, and a compatible local colouring $\beta_c\in\Col_c(\chi)$.  After substituting the colours of $\chi$ and $\beta_c$ in the relevant tables as in Section~3.2, define the local contribution at $c$ by
\[
  w_c(\chi,\beta_c)
  :=
  \prod_{k=1}^{m_t} E(x_k)
  \prod_{(u,v,w)\in\mathcal{A}_t} \Theta(u,v,w)
  \prod_{U\in\mathcal{S}_t} \Tet(U).
\]
Thus $w_c(\chi,\beta_c)$ is determined by $\chi$, $\beta_c$, and the corresponding finite crossing table in Appendix~A.
\end{definition}

\subsection{Crossing weight}\label{subsec:crossing-weight}

We obtain the crossing weight by summing the local contributions over all compatible local colourings at the crossing.

\begin{definition}[Crossing weight]\label{def:crossing-weight}
Fix a first-stage admissible colouring $\chi\in\Adm_1(D;r)$ and a crossing $c$.  Define the crossing weight by
\[
  W_c(\chi)
  :=\sum_{\beta_c\in\Col_c(\chi)} w_c(\chi,\beta_c).
\]
If $\Col_c(\chi)$ is empty, set $W_c(\chi):=0$.
\end{definition}

\begin{lemma}[Representative matching]\label{lem:representative-matching}
Any two choices of rotation or reflection and simultaneous sheet relabelling used in Section~3.2 to match the representative in \cref{fig:crossing-types} with an actual crossing $c$ induce a weight-preserving bijection between the resulting sets of compatible local colourings.  Consequently, $W_c(\chi)$ is independent of these choices.
\end{lemma}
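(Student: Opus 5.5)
Two matchings of the representative of type $t$ with the actual crossing $c$ differ by a self-matching of the representative.  If $(g,\pi)$ and $(g',\pi')$ are the two choices (plane symmetry and sheet relabelling), then $h:=g^{-1}g'$ is a rotation or reflection of the representative diagram of \cref{fig:crossing-types}.  It carries the over-strand to the over-strand and the under-strand to the under-strand, because both choices match strands with those of $c$.  Likewise $\rho:=\pi^{-1}\pi'$ is a permutation of $\{1,2,3,4\}$ that preserves the representative transposition labels after $h$ is applied.  So the plan is to classify these \emph{symmetries of the labelled representative} and to show that each one acts on the table of Appendix~A by a permutation of the local colour symbols $x_1,\ldots,x_{m_t}$.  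This permutation must carry the triple list $\mathcal{A}_t$ onto itself as a multiset of unordered triples, and it must carry the tuple list $\mathcal{S}_t$ onto itself up to the symmetries of $\Tet$.

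The first step is to record the invariance of the coefficients.  $\Theta(x,y,z)$ and admissibility are symmetric in the three entries.  $\Tet(U)$ is invariant under the action of $S_4$ on the vertex labels $0,1,2,3$, which permutes the six columns $01,02,03,12,13,23$.  This is the tetrahedral symmetry of the quantum $6j$-symbol: the face triples are permuted among themselves, the $A_j$ are permuted, and the $B_k$ are permuted.  Hence the defining sum is unchanged.  The edge weight $E$ depends on a single colour.  It therefore suffices to produce a bijection of local symbols under which the tables are equivalent.

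The second step is the case analysis.  For type 1, the symmetries are generated by the half-turn and by the reflections fixing the strands, together with sheet relabellings fixing $(12)$.  For type 2, the symmetries must preserve $\sigma=(23)$ together with the unordered pair $\{(12),(13)\}$ on the under-strand, possibly swapped by the reflection.  For type 3, the allowed relabellings are those normalizing the pair $(34),(12)$ compatibly with the strand roles.  For each generator we compute how the substitution rule of Section~3.2 changes the first-stage symbols:
\begin{itemize}[label={}]
\item $p_i\mapsto p_{\rho(i)}$ and $b_i\mapsto b_{\rho(i)}$;
\item $h$ permutes $e_1,\ldots,e_4$ and $R_{12},\ldots,R_{41}$;
\item $q$-symbols are read through $\mu_R$ for the region playing the role of $R_{12}$.
\end{itemize}
If $h$ moves $R_{12}$ to another sector $R_{ab}$, the $q$-indices change by the composite of transpositions crossed between these sectors.  \cref{lem:region-transport} shows that this composite equals the factor relating $\mu$ on the two regions.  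This is exactly the relabelling built into the table's convention for $q$ over the other sectors.  Then, from the tables, we read off the induced permutation of the $x_k$.  This permutation comes from the induced simplicial automorphism of the local triangulation, i.e.\ relabelling the interior edges of the crossing's local complex.  We check that it maps $\mathcal{A}_t$ and $\mathcal{S}_t$ to themselves.

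Given this, define $\beta_c\mapsto\beta_c\circ\phi$, where $\phi$ is the induced permutation of local symbols.  This map is bijective, and it preserves compatibility with $\chi$ because the substituted triples coincide as multisets.  It preserves $w_c$ because the factors coincide after reordering, using the invariances of $\Theta$ and $\Tet$.  Summing over $\Col_c(\chi)$ then gives the independence of $W_c(\chi)$.

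The main obstacle is the second step: verifying, table by table, that each symmetry of the representative really induces an automorphism of the finite lists in Appendix~A.  The $q$-index reading through $\mu_R$ is especially delicate when the base sector $R_{12}$ moves.  I would handle this by working with the local triangulation rather than the raw rows, so that the automorphism is manifest and the lists are merely its face and tetrahedron sets.
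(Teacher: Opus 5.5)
Your proposal is correct and follows the paper's proof. Both arguments reduce to the relative symmetries of the labelled representative, namely a strand-preserving plane symmetry paired with a compatible sheet relabelling (the paper counts $16$, $4$, $16$ such pairs). Both then check that each symmetry induces a permutation of the local colours $x_k$ carrying $\mathcal{A}_t$ and $\mathcal{S}_t$ to themselves up to reordering entries and relabelling tetrahedron vertices. The conclusion then follows from the symmetry of $\Theta$ and the tetrahedral symmetry of $\Tet$.

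The only difference is how you obtain the permutation of the $x_k$: you would take it from a simplicial automorphism of $X^{(t)}$. The paper instead reads it off by direct comparison of Tables~A.2--A.4, so that the lemma depends only on the tables.
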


\begin{proof}
Fix the actual crossing $c$, including its four transposition labels.  We compare only the different ways of matching this fixed crossing with the representative in \cref{fig:crossing-types}.  Two such matchings differ by one of the four plane symmetries preserving the over-strand and under-strand: the identity, the reflections in the two strand axes, and their composition, which is a half-turn.  For type~1 the four relative sheet relabellings are $\mathrm{id},(12),(34),(12)(34)$; the six possible transposition labels of an actual type~1 crossing have already been accounted for by the initial matching in Section~3.2 and are not additional ambiguities.  For type~2 the three incident labels determine the relative sheet relabelling uniquely, whereas for type~3 the two entries in each of the two disjoint transpositions may be interchanged independently.  Thus the numbers of relative cases are respectively $4\cdot4=16$, $4\cdot1=4$, and $4\cdot4=16$.

In each case the plane symmetry and sheet relabelling identify the boundary colours in the relevant table, with the $q$-indices transported as in Section~3.2.  Direct comparison of Tables~A.2--A.4 then gives a bijection of the local variables $x_k$ under which the rows of $\mathcal{A}_t$ agree up to reordering their three entries, and the rows of $\mathcal{S}_t$ agree up to relabelling the four vertices of a tetrahedron.  Hence the products of the $E$, $\Theta$, and $\Tet$ factors are unchanged, so the bijection preserves $w_c(\chi,\beta_c)$ term by term and therefore preserves $W_c(\chi)$.

This proves the statement using only the tables in Appendix~A.  A separately supplied machine-readable certificate reproduces the same $36$ finite comparisons for independent verification; it is not an additional assumption in the proof.
\end{proof}

\section{The diagrammatic state sum and the main theorem}

This section defines the global factor, combines it with the crossing weights
to form the diagrammatic state sum, and states the reconstruction theorem.

\subsection{The global diagram factor}

\begin{definition}[Global diagram factor]\label{def:global-diagram-factor}
Fix a first-stage admissible colouring
$\chi\in\Adm_1(D;r)$.  For every graph edge $e$, take $E(p_i(e))$ for
each first sheet label $i$ and $E(q_j(e))$ for each second sheet label $j$.
For every complementary region $R$ and every first sheet label $i$, also
take $E(b_i(R))$.

Use the side numbering $R_1(e),R_2(e)$ from Section~2.1.
For $a\in\{1,2\}$ and $i\in\{1,2,3,4\}$, take the triangle weight
$\Theta\bigl(p_i(e),q_{\mu_{R_a(e)}(i)}(e),b_i(R_a(e))\bigr)$.  The
\emph{global diagram factor}
$W_D(\chi)$ is the product containing each of these factors once:
\begin{align}
W_D(\chi)
={}&
\prod_{e\in E(D)}
  \left(\prod_{i=1}^{4}E(p_i(e))\right)
  \left(\prod_{j=1}^{4}E(q_j(e))\right)
\prod_{R\in\mathcal R(D)}\prod_{i=1}^{4}E(b_i(R))
\notag\\
&\times
\prod_{e\in E(D)}\prod_{a=1}^{2}\prod_{i=1}^{4}
  \Theta\bigl(p_i(e),q_{\mu_{R_a(e)}(i)}(e),b_i(R_a(e))\bigr).
\label{eq:global-diagram-factor}
\end{align}
\end{definition}

Suppose that $\Gamma_D$ is connected and has $n$ vertices.  Transitivity of
the monodromy action implies $n>0$.
We define the normalization exponent
\begin{equation}\label{eq:normalization-exponent}
N(D):=8+8n.
\end{equation}
Proposition~\ref{prop:global-vertex-count} will identify $N(D)$ with the
number of global vertices of the triangulation $K(D)$.

\subsection{The diagrammatic state sum}\label{subsec:diagrammatic-state-sum}

We multiply the global diagram factor by all crossing weights and sum the
result over the first-stage admissible colourings.  We use the Turaev--Viro
parameters $(r,s)$ and the vertex normalization $\eta_{r,s}$ fixed in
Section~3.3.

\begin{definition}[Diagrammatic state sum]\label{def:diagrammatic-state-sum}
Fix the base region $R_0$ from Section~2.3.  For a first-stage admissible
colouring $\chi\in\Adm_1(D;r)$, let $W_D(\chi)$ be the global diagram factor
of Definition~4.1 and let $W_c(\chi)$ be the crossing weight of
Definition~3.3 for each crossing $c$.  The \emph{diagrammatic state sum}
relative to $R_0$ is
\begin{equation}\label{eq:diagrammatic-state-sum}
Z_{r,s}^{R_0}(D)
:=
\eta_{r,s}^{N(D)}
\sum_{\chi\in\Adm_1(D;r)}
W_D(\chi)
\prod_{c\in\Cross(D)}W_c(\chi).
\end{equation}
\end{definition}

\begin{remark*}[Notation for the base region]
Definition~4.2 gives $Z_{r,s}^{R_0}(D)$ for the base region fixed in
Section~2.3.  We write $Z_{r,s}(D)$ when this choice is understood and retain
the superscript only when comparing choices.  After Theorem~4.4 has been
proved for an arbitrary fixed $R_0$, Corollary~7.5 proves independence of the
base region.
\end{remark*}

Substituting
\[
W_c(\chi)=\sum_{\beta_c\in\Col_c(\chi)}w_c(\chi,\beta_c)
\]
from Definition~3.3 into Definition~4.2 expands the state sum over simultaneous
choices of local colourings at all crossings.

For an admissible diagram colouring
\[
\theta=\bigl(\chi,\{\beta_c\mid c\in\Cross(D)\}\bigr)
\in\Adm_2(D;r)
\]
from Definition~3.1, define its weight by
\begin{equation}\label{eq:admissible-diagram-colouring-weight}
w_{r,s}(\theta)
:=
W_D(\chi)
\prod_{c\in\Cross(D)}w_c(\chi,\beta_c).
\end{equation}

\begin{proposition}[Expansion over admissible diagram colourings]
\label{prop:complete-colouring-expansion}
With the fixed base region understood and its superscript suppressed as
above, the diagrammatic state sum of Definition~4.2 is the finite sum
\begin{equation}\label{eq:complete-colouring-expansion}
Z_{r,s}(D)
=
\eta_{r,s}^{N(D)}
\sum_{\theta\in\Adm_2(D;r)}w_{r,s}(\theta).
\end{equation}
\end{proposition}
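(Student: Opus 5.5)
The argument is a finite distributivity computation, so I will check finiteness and then expand the product of sums. First I will record that every index set involved is finite. The colour set $C_r$ is finite, and the diagram has finitely many graph edges, regions and crossings. Hence $\Adm_1(D;r)$ is a subset of a finite product of copies of $C_r$. For each crossing $c$ of type $t$, the set $\Col_c(\chi)$ is a subset of $C_r^{m_t}$. Consequently $\Adm_2(D;r)$, which by Definition~3.1 is the set of pairs $(\chi,\{\beta_c\})$ with $\beta_c\in\Col_c(\chi)$, is finite. This makes the right-hand side of \eqref{eq:complete-colouring-expansion} a finite sum, so no convergence or ordering issues arise.

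Next I fix $\chi\in\Adm_1(D;r)$ and substitute Definition~3.3 into $\prod_{c}W_c(\chi)$. Since $\Cross(D)$ is finite, the distributive law gives
\[
\prod_{c\in\Cross(D)}\sum_{\beta_c\in\Col_c(\chi)}w_c(\chi,\beta_c)
=\sum_{(\beta_c)_c\in\prod_c\Col_c(\chi)}\ \prod_{c\in\Cross(D)}w_c(\chi,\beta_c).
\]
Here the choice of $\beta_c$ is independent at each crossing, because $\Col_c(\chi)$ depends only on $\chi$ and $c$. The convention $W_c(\chi)=0$ for empty $\Col_c(\chi)$ is consistent with this identity, since both sides then vanish: the product over crossings is $0$, and the indexing product set is empty. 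If $\Cross(D)$ were empty the identity would read $1=1$, but transitivity gives $n>0$ anyway.

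I then multiply by the factor $W_D(\chi)$, which does not depend on the $\beta_c$, and sum over $\chi$. By Definition~3.1, the double index set $\{(\chi,(\beta_c)_c)\mid \chi\in\Adm_1(D;r),\ \beta_c\in\Col_c(\chi)\}$ is exactly $\Adm_2(D;r)$. Each summand is $w_{r,s}(\theta)$ by \eqref{eq:admissible-diagram-colouring-weight}. Pulling out the constant $\eta_{r,s}^{N(D)}$, where $N(D)$ depends only on $D$, gives \eqref{eq:complete-colouring-expansion}.

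No step is a real obstacle. The only point needing care is that $\Adm_2(D;r)$ is exactly the disjoint union over $\chi$ of the product sets $\prod_c\Col_c(\chi)$. Lemma~3.4 is relevant here: it ensures that $W_c(\chi)$, and hence the identification of $\Col_c(\chi)$ up to weight-preserving bijection, is independent of the matching choices. So I fix one matching per crossing once and for all and use the same one on both sides.
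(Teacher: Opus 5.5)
Your proposal is correct and follows the paper's proof. You expand each $W_c(\chi)$ as a sum over $\Col_c(\chi)$, use that the local colourings at different crossings are chosen independently once $\chi$ is fixed, and identify the expanded terms bijectively with $\Adm_2(D;r)$, each term being $w_{r,s}(\theta)$; your extra checks (finiteness, the empty-$\Col_c(\chi)$ convention, and fixing one matching per crossing via Lemma~3.4) are sound refinements of that same argument.
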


\begin{proof}
All sums are finite.  Expand each $W_c(\chi)$ in Definition~4.2 as the sum
over $\Col_c(\chi)$.  Once $\chi$ is fixed, the local colourings $\beta_c$ at
different crossings can be chosen independently.  Thus the terms in the
expanded product are in one-to-one correspondence with the admissible diagram
colourings $\theta\in\Adm_2(D;r)$, and the corresponding term is
$w_{r,s}(\theta)$.  This gives the displayed sum.
\end{proof}

\subsection{The reconstruction theorem}\label{subsec:reconstruction-theorem}

We now state the main result for a covering presentation with a connected
projection graph.  Its proof is given in Section~7.

\begin{theorem}[Reconstruction theorem]\label{thm:reconstruction-connected}
Let $D$ be a diagram of a four-fold simple branched covering presentation
whose monodromy action is transitive, and let $M(D)$ be the closed
$3$-manifold represented by $D$.  Assume that the projection graph
$\Gamma_D$ is connected.  For any choice of $R_0\in\mathcal R(D)$ as the
base region, let
$Z_{r,s}^{R_0}(D)$ be the diagrammatic state sum of Definition~4.2 formed
using $R_0$.  If $r\geq3$ and $s$ is an integer satisfying
\[
  0<s<2r,
  \qquad
  \gcd(r,s)=1,
\]
then
\begin{equation}\label{eq:reconstruction-connected}
Z_{r,s}^{R_0}(D)=\TV_{r,s}(M(D)),
\end{equation}
where the right-hand side denotes the Turaev--Viro invariant with the
conventions fixed in Section~5.
\end{theorem}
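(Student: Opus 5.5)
The argument follows the outline in the introduction: build a closed generalized triangulation $K(D)$ whose Turaev--Viro state sum coincides term by term with the expansion of Proposition~\ref{prop:complete-colouring-expansion}.

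\textbf{Step 1: assemble $K(D)$.} At each crossing $c$ of type $t$ I place the local triangulation recorded in Appendix~A, namely the pullback to the four-fold cover of a banana $3$-ball around $c$. Its interior edges carry the local symbols $x_1,\ldots,x_{m_t}$, its tetrahedra are the rows of $\mathcal S_t$, and its interior triangles are the rows of $\mathcal A_t$. The boundary of each local piece consists of lifted triangles with edges $p_i(e_k)$, $q_j(e_k)$, $b_i(R)$. I glue these pieces along the graph edges of $\Gamma_D$, identifying the lifted boundary triangles over each half-edge pair with the ordered face pairings forced by the sheet correspondence $j=\mu_R(i)$. Lemma~\ref{lem:region-transport} guarantees that these pairings are consistent around every region, and Lemma~\ref{lem:representative-matching} shows that the result does not depend on how each crossing was matched to its representative. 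Connectedness of $\Gamma_D$ is used here so that the lifted region and edge data form a single closed complex.

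\textbf{Step 2: PL identification and vertex count.} I show that $|K(D)|\cong M(D)$ by a PL homeomorphism. Downstairs, the banana balls together with the regions and edges reassemble into a decomposition of $S^3$ adapted to the link. Each local piece is the restriction of $p_D$ over its ball, and the gluings respect the monodromy, so the pieces glue to $p_D^{-1}(S^3)$. Counting vertex classes after all identifications should give $8+8n=N(D)$; this is Proposition~\ref{prop:global-vertex-count}.

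\textbf{Step 3: bijection of states.} I show that restricting an admissible edge colouring of $K(D)$ gives a bijection with $\Adm_2(D;r)$. The edges of $K(D)$ split into three kinds: the global edges $p_i(e)$, $q_j(e)$, $b_i(R)$, one per symbol after the identifications, and the interior edges of each crossing piece. The triangles split into the global lifted triangles, which give exactly the first-stage triples of Definition~\ref{def:first-stage-colouring} (two sides and four sheets per edge), and the interior triangles, which give $\mathcal A_t$. Admissibility of the whole colouring is therefore equivalent to $\chi\in\Adm_1$ together with $\beta_c\in\Col_c(\chi)$ for every $c$. Then I match the factors of the Turaev--Viro state sum with those of $w_{r,s}(\theta)$. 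The edge factors $E$ split into $W_D$ and the $\prod E(x_k)$. The triangle factors $\Theta$ split into $W_D$ and the $\mathcal A_t$ products. The tetrahedron factors $\Tet$ come from the $\mathcal S_t$, where the column order $01,\dots,23$ encodes the vertex-ordered six-tuple, so each $\Tet$ agrees with the standard symmetric $6j$ normalization. The vertex factor is $\eta_{r,s}^{N(D)}$. Each factor occurs exactly once: boundary triangles are shared between two crossing pieces but their $\Theta$ is placed only in $W_D$, and interior ones appear only in their own crossing. This yields $Z^{R_0}_{r,s}(D)=\TV_{r,s}(K(D))=\TV_{r,s}(M(D))$, using the invariance of Turaev--Viro under the choice of generalized triangulation.

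\textbf{Main obstacle.} I expect Step 1 combined with the "exactly once" bookkeeping to be the hardest part. One must check that the ordered face pairings with the $q$-index transport $q_{\mu_R(\ell)}$ really close up consistently, including at loop edges and at regions occupying several sectors of one crossing. One must also check that no edge or triangle is double counted or missed in $W_D$ versus the local tables. My first approach is to verify this crossing type by crossing type from Tables~A.2--A.4: for each local piece, list the boundary simplices and check that they are exactly the lifted triangles over the four half-edges and sectors. Then I would check globally that each graph edge contributes precisely its two sides times four sheets.
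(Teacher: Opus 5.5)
Your proposal is correct and follows the paper's own route essentially step for step. You build $K(D)$ from the crossing-wise local models glued by the ordered pairings $g_{e,R,i}$, identify $|K(D)|$ with $M(D)$ and count its $N(D)$ vertices (Theorem~\ref{thm:global-complex-identification}, Proposition~\ref{prop:global-vertex-count}), biject $\Adm(K(D))$ with $\Adm_2(D;r)$, match each $E$, $\Theta$, $\Tet$ and $\eta_{r,s}$ factor exactly once (Theorem~\ref{thm:colouring-correspondence}, Propositions~\ref{prop:crossing-local-factors}--\ref{prop:equality-fixed-base-region}), and finish with Turaev--Viro invariance. The one ingredient you assume rather than plan to prove is that each tabulated local model is a boundary-marked PL copy of the lifted banana; the paper proves this in Proposition~\ref{prop:standard-local-triangulations}, using the deck-involution check of Appendix~\ref{app:type1-pl-model} for the type~1 solid torus and the Alexander trick for the cone components.
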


\section{The Turaev--Viro state sum}\label{sec:turaev-viro-state-sum}

This section assigns the coefficients from Section~3.3 to the global
simplices and admissible edge colourings of a closed generalized triangulation,
fixing the Turaev--Viro state sum used in the comparison.

\subsection{Generalized triangulations and local weights}
\label{subsec:generalized-triangulations}

A \emph{closed generalized triangulation} $K$ is face-pairing data obtained by
pairing the triangular faces of finitely many abstract tetrahedra in pairs by
affine maps, such that the geometric realization is a closed $3$-manifold.
Distinct vertices or edges of one tetrahedron may become globally identified.
We write
\[
\mathcal V(K),\qquad \mathcal E(K),\qquad
\mathcal F(K),\qquad \mathcal T(K)
\]
for the sets of global vertex, edge, triangle, and tetrahedron classes,
respectively.

An \emph{edge colouring} of $K$ is a map
\[
\varphi\colon\mathcal E(K)\longrightarrow C_r.
\]
It is admissible if the three edge colours of every global triangle class are
admissible.  We denote the set of admissible edge colourings by $\Adm(K)$.
This is a different object from an admissible diagram colouring $\theta$ in
Definition~3.1; Theorem~7.1 will give the correspondence between them.

Fix $\varphi\in\Adm(K)$.  Each global vertex class contributes one factor
$\eta_{r,s}$, and each global edge class $e$ contributes
$E(\varphi(e))$.  For a global triangle class $F$, write
$\varphi|_{\partial F}$ for the triple of colours on its three edges.  Its
order does not affect $\Theta$, and $F$ contributes
$\Theta(\varphi|_{\partial F})$.  A global tetrahedron class $T$ contributes
$\Tet(\varphi|_T)$, where $\varphi|_T$ denotes its six edge colours.  The
coefficients $\eta_{r,s}$, $E$, $\Theta$, and $\Tet$ are those fixed in
Section~3.3.

For each abstract tetrahedron, order its vertices by
$0<1<2<3$.  If $x_{mn}$ is the colour of the edge joining vertices $m$ and
$n$, set
\[
\varphi|_T=(x_{01},x_{02},x_{03},x_{12},x_{13},x_{23}).
\]
This is the order in which the six colours are supplied to the tetrahedron
weight in Section~3.3.  Figure~\ref{fig:tetrahedron-edge-order} shows the
vertex and edge labels.

\begin{figure}[H]
  \centering
  \includegraphics[width=.36\textwidth]{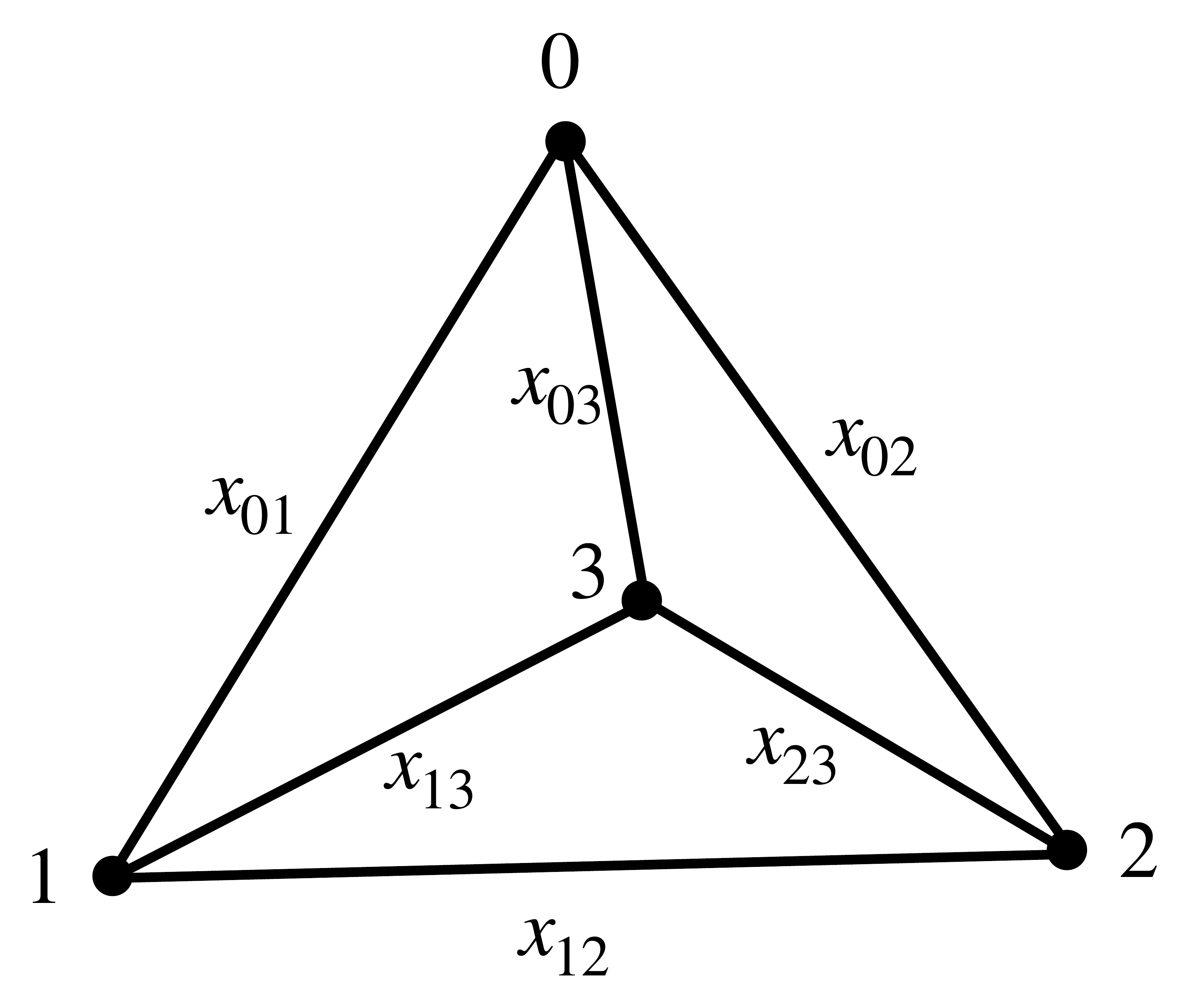}
  \caption{Vertex and edge labels of an ordered tetrahedron.  The planar
  position of vertex $3$ is schematic.}
  \label{fig:tetrahedron-edge-order}
\end{figure}

\subsection{The state sum for a closed generalized triangulation}%
\label{subsec:closed-tv-state-sum}

We now combine the local weights from Section~5.1 to form the Turaev--Viro
state sum of a closed generalized triangulation.

Following \cite[Section~2.1]{burton-maria-spreer-2018}, we use the expression
in which the factor assigned to each global vertex, edge, triangle, and
tetrahedron class appears once.  With the conventions used here, no
square-root factors occur.  For a closed generalized triangulation $K$, set
\begin{equation}\label{eq:closed-tv-state-sum}
\begin{aligned}
\TV_{r,s}(K)
={}&\eta_{r,s}^{\lvert\mathcal V(K)\rvert}
\sum_{\varphi\in\Adm(K)}
\prod_{e\in\mathcal E(K)}E(\varphi(e)) \\
&{}\times
\prod_{F\in\mathcal F(K)}\Theta(\varphi|_{\partial F})
\prod_{T\in\mathcal T(K)}\Tet(\varphi|_T).
\end{aligned}
\end{equation}
Here $\lvert\mathcal V(K)\rvert$ is the number of global vertex classes,
and each product runs once over the indicated global simplex classes.  The
invariance theorem for the Turaev--Viro state sum shows that this quantity is
independent of the chosen generalized triangulation.  It therefore depends
only on the closed $3$-manifold realized by $K$ and is the Turaev--Viro
invariant of that manifold
\cite{turaev-viro-1992,burton-maria-spreer-2018}.

\begin{remark}[No subdivision is needed]\label{rem:no-subdivision}
The particular generalized triangulation $K(D)$ constructed later in
Definition~\ref{def:face-pairing-complex} pairs all triangular faces of its
abstract tetrahedra by affine maps.  The state sum in
\cite[Section~2.1]{burton-maria-spreer-2018} therefore applies directly to
this face-pairing data; subdivision into an ordinary simplicial complex is
not required.
\end{remark}

\section{Local triangulations and global gluing}
\label{sec:triangulation}

This section constructs a closed generalized triangulation of the covering
manifold.  Following \cite[Section~6.1 and Figures~23--24]{hatakenaka-2010},
we glue abstract quadrilaterals along the dual graph, suspend them to form
bananas, and glue their lifted local models by the induced boundary markings.
We begin with the bananas and their boundary triangles, assuming as in
Section~2.3 that $\Gamma_D$ is connected.

\subsection{Bananas and their boundary triangles}
\label{subsec:bananas}

For each crossing $c$, take an abstract quadrilateral $U_c$.  Its four sides
are indexed in cyclic order by the four half-edges incident to $c$, and a
point $v(h)$ is placed on the side indexed by $h$.  The four vertices of
$U_c$ correspond to the four local sectors at $c$.  Suppose that a graph
edge $e$ has half-edges $h$ and $k$ at crossings $c$ and $d$, respectively.
Identify the corresponding sides of $U_c$ and $U_d$ so that the complementary
regions on their two sides agree.  The quotient obtained after all such
identifications is the dual cell decomposition of the embedded graph
$\Gamma_D$, and hence is $S^2$.  For a loop edge, two distinct sides of the
same quadrilateral are identified.
Thus $U_c$ is always treated as an abstract quadrilateral; its image in the
quotient need not have a boundary that is a simple closed curve.

Take points $P$ and $Q$ on the two sides of $S^2$.  Cone $U_c$ from $P$ and
from $Q$, and join the two cones along their common base $U_c$.  The resulting
abstract $3$-ball is denoted by $B_c$ and called a \emph{banana}.  Coning the
side identifications of the quadrilaterals gives face identifications of the
bananas.  Their quotient is the suspension of $S^2$, and therefore is $S^3$.
For a loop edge, two distinct faces of the same banana are identified.  If
the local crossing tangle determined by the over--under information of $D$ is
placed in each $B_c$, the quotient contains the labelled link represented by
$D$ \cite[Figures~23--24]{hatakenaka-2010}.  The quadrilateral and banana at a
crossing are shown in \cref{fig:banana}.

\begin{figure}[!tbp]
  \centering
  \includegraphics[width=.76\textwidth]{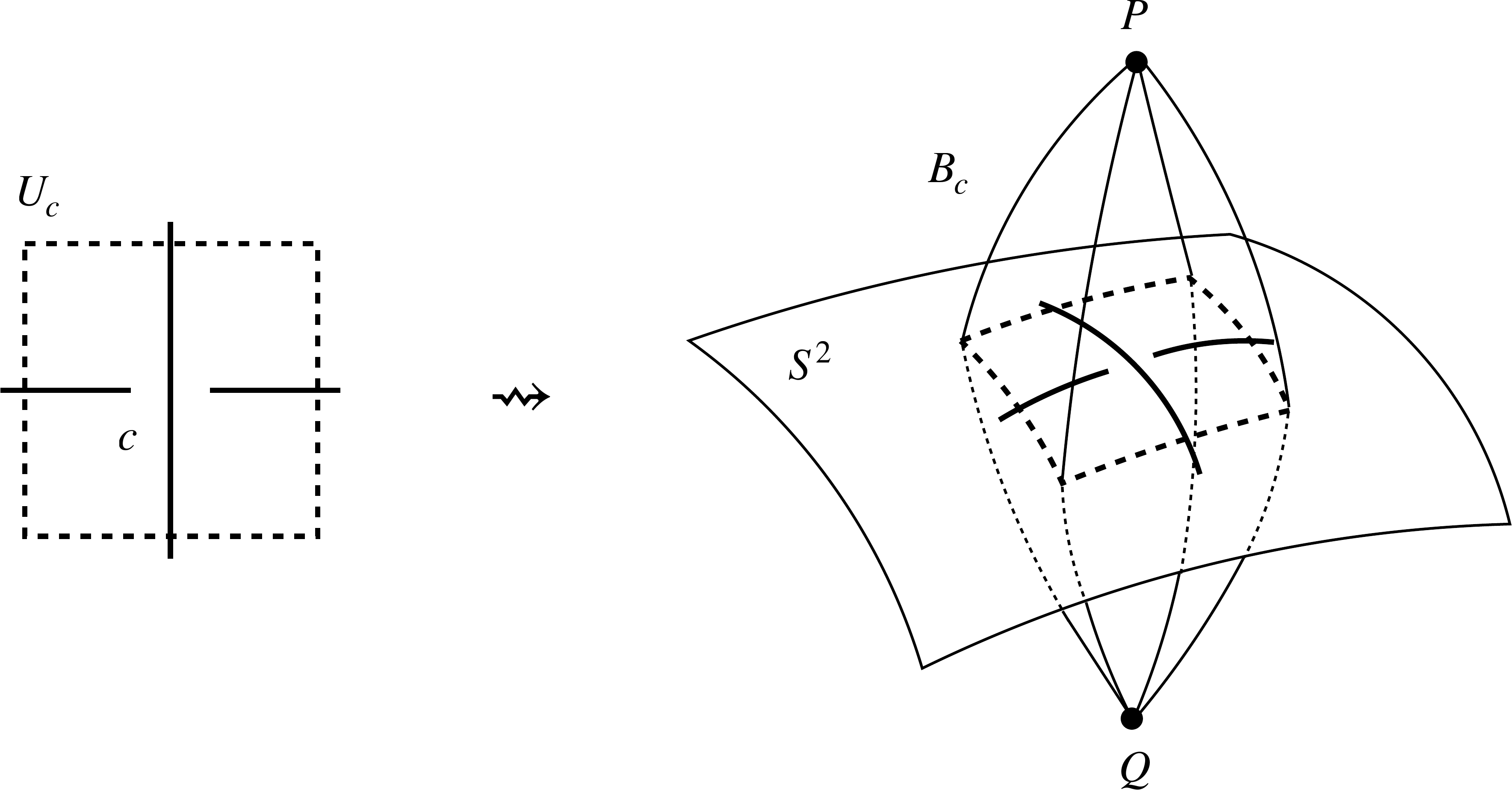}
  \caption{The abstract quadrilateral $U_c$ associated with a crossing $c$
  (dashed square, left) and the banana $B_c$ (right).  The local crossing
  tangle lies inside $B_c$.}
  \label{fig:banana}
\end{figure}

Each vertex of $U_c$ corresponds to a local sector $R$.  On the boundary of
$B_c$, that vertex determines a $P$--$Q$ edge associated with $R$.  The
suspension face of the side indexed by a half-edge $h$ contains $v(h)$.
Joining $v(h)$ to $P$ and $Q$ divides this face into two PL triangles,
corresponding to the two local sectors adjacent to $h$.  Consequently,
$\partial B_c$ consists of eight such boundary triangles.  If the same global
complementary region occupies more than one local sector, the associated
$P$--$Q$ edges are nevertheless distinct before the bananas are glued.

Write the four half-edges incident to $c$ as
$h_1,h_2,h_3,h_4$ in the common cyclic order used in
Figures~\ref{fig:crossing-positions} and~\ref{fig:banana-boundary}.  The
subscripts $1,2,3,4$ record these cyclic positions; they are not sheet labels
from Section~2.3.  The point on the side indexed by $h_k$ is also denoted by
$v(h_k)$ when regarded as a boundary vertex of $B_c$.  Thus the four vertices
in \cref{fig:banana-boundary} are $v(h_k)$ for $k=1,2,3,4$.

As in Figure~\ref{fig:crossing-positions}, denote the global complementary
regions containing the local sectors between the consecutive pairs
$(h_1,h_2)$, $(h_2,h_3)$, $(h_3,h_4)$, and $(h_4,h_1)$ by
$R_{12},R_{23},R_{34},R_{41}$, respectively.  The same global region may
occupy more than one of these sectors, but the four local sectors remain
distinct.  Each sector determines one boundary edge with endpoints $P$ and
$Q$.  Figure~\ref{fig:banana-boundary}(b) shows these four edges, and
Figure~\ref{fig:banana-boundary}(c) displays the two boundary triangles
sharing each such edge as a diamond.

\begin{definition}[Base triangle]\label{def:base-triangle}
Let $h\in\mathcal H(D)$ be a half-edge at $c$, and choose one of the two local
sectors $R$ adjacent to $h$.  The boundary triangle of $B_c$ with vertices
$P,Q,v(h)$ is denoted by $\Delta_{h,R}$; see
\cref{fig:banana-boundary}(c).  Here $R$ is one of the local positions
$R_{12},R_{23},R_{34},R_{41}$.  For example, the two triangles forming the
$R_{12}$ diamond are $\Delta_{h_1,R_{12}}$ and
$\Delta_{h_2,R_{12}}$.
\end{definition}

\begin{figure}[!tbp]
  \centering
  \includegraphics[width=.96\textwidth]{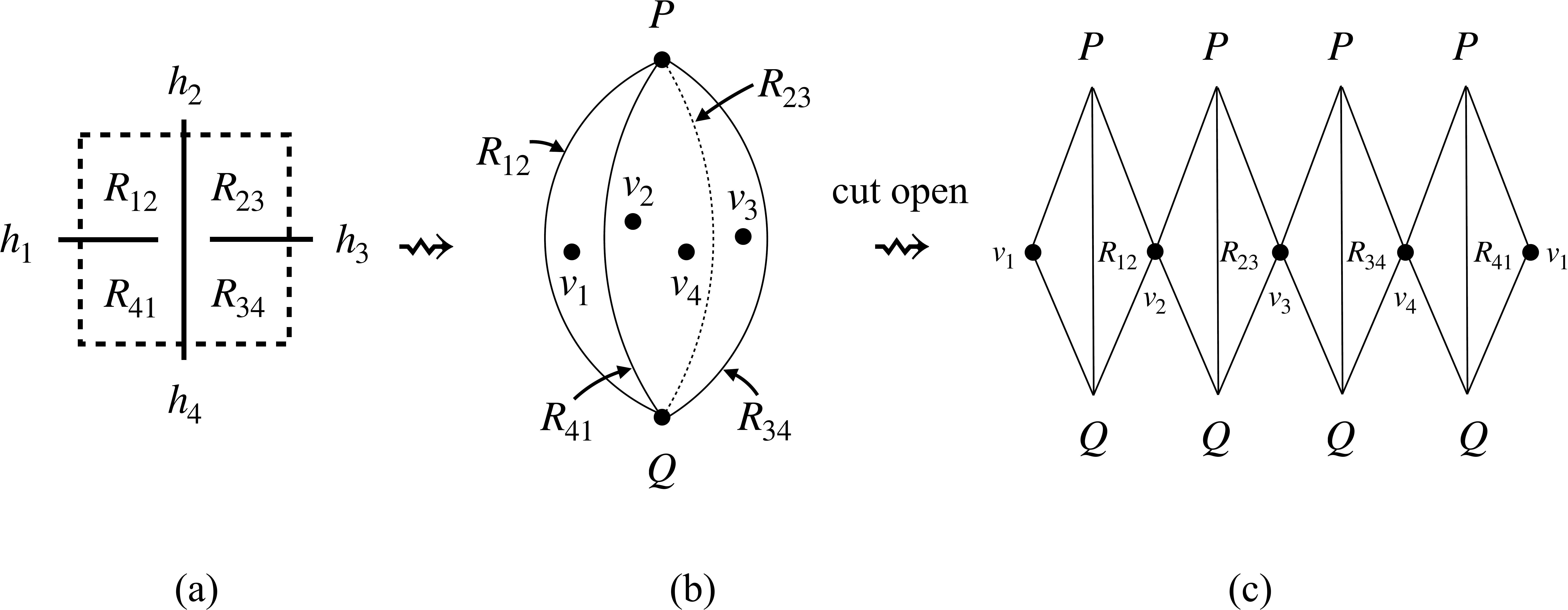}
  \caption{The boundary subdivision of $B_c$.  (a) Half-edges and local
  sectors at $c$.  (b) Boundary vertices and the four $P$--$Q$ edges.
  (c) The boundary cut into four diamonds, each consisting of two triangles.}
  \label{fig:banana-boundary}
\end{figure}

\subsection{Lifted boundary triangles}
\label{subsec:lifted-boundary-triangles}

We next describe the lifts of the boundary triangles of a banana and record
the correspondence between their vertex labels.  This is the boundary data
used when the lifted local models are glued.

Let $p_D\colon M(D)\longrightarrow S^3$ be the four-fold simple branched
covering map.  The gluing in Section~6.1
gives each $B_c$ a natural map to $S^3$.  Let $\widetilde B_c$ be the pullback
of $p_D$ along this map.  Thus a point of $\widetilde B_c$ is a pair consisting
of a point of $B_c$ and a point of $M(D)$ that have the same image in $S^3$.

Fix one of the representative labellings in
Figure~\ref{fig:crossing-types}, and consider the corresponding
$\widetilde B_c$.  Denote the four lifts of $P$ by
$P_1,P_2,P_3,P_4$ and call their indices the \emph{local sheet labels}.
Likewise, denote the four lifts of $Q$ by $Q_1,Q_2,Q_3,Q_4$.

Normalize the $Q$-labels so that, over the representative sector $R_{12}$,
the lift of the $P$--$Q$ edge beginning at $P_i$ ends at $Q_i$.  The branch
labels then determine the $Q$-endpoint over each of the other three sectors.
When the standard local model is placed at an actual crossing, its local
$P$-labels are uniformly relabelled as global first sheet labels.  Over a
sector occupying a global complementary region $R$, the lifted edge beginning
at the global $P_\ell$ then ends at the global $Q_{\mu_R(\ell)}$.  Thus these
local endpoint assignments are the representative-model form of the region
transport defined in Section~2.3.

Let $h$ be a half-edge, let $e$ be the graph edge containing it, and let
$\tau_e$ be the transposition label of $e$.  The orbit of a sheet label $i$
under $\tau_e$ is denoted by
\[
  I=[i]_{\tau_e}:=\{i,\tau_e(i)\}.
\]
The point $v(h)$ is a branch endpoint on the boundary of the banana.  Denote
the lifted vertex over $v(h)$ corresponding to the orbit $I$ by $v(h)^I$.

For $i\in\{1,2,3,4\}$, define $\Delta_{h,R}^{\,i}$ to be the lift of the base
triangle $\Delta_{h,R}$ that contains $P_i$.  If the entry for the sector
$R$ in Appendix~B sends the $P$-index $i$ to the $Q$-index $j$, its ordered
vertices are
\[
  \bigl(P_i,Q_j,v(h)^I\bigr),
  \qquad I=[i]_{\tau_e}.
\]

For the example in \cref{fig:lifted-base-triangle}, choose the half-edge
$h_1$ and the adjacent local sector $R_{12}$ in
Figure~\ref{fig:banana-boundary}(a).  Thus the shaded triangle in
Figure~\ref{fig:lifted-base-triangle}(a) is
$\Delta_{h_1,R_{12}}$.  These subscripts record the local positions in
Figure~\ref{fig:banana-boundary}(a), not sheet labels.  In the representative
labelled banana considered here, the graph edge $e_1$ containing $h_1$ has
label $\tau_{e_1}=(12)$, and the normalization over $R_{12}$ makes the
$Q$-endpoint of the lift beginning at $P_i$ equal to $Q_i$.  The ordered
vertices of the four lifted triangles are
\[
\begin{gathered}
  (P_1,Q_1,v(h_1)^{12}),\qquad (P_2,Q_2,v(h_1)^{12}),\\
  (P_3,Q_3,v(h_1)^{3}),\qquad (P_4,Q_4,v(h_1)^{4}).
\end{gathered}
\]

\begin{figure}[!tbp]
  \centering
  \includegraphics[width=.88\textwidth]{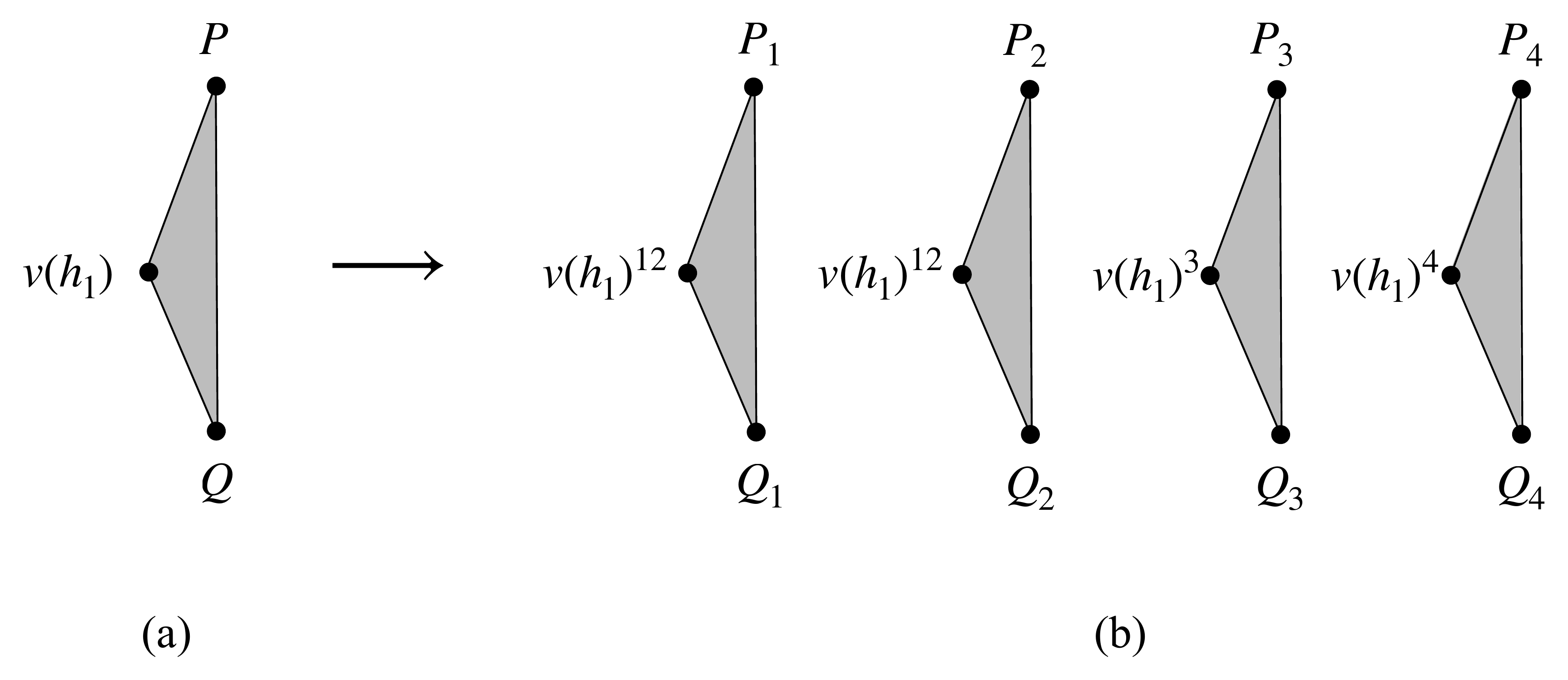}
  \caption{One boundary triangle of $B_c$ and its four lifts.
  (a) $\Delta_{h_1,R_{12}}$.  (b) The four lifted triangles for
  the normalization over $R_{12}$ and $\tau_{e_1}=(12)$.}
  \label{fig:lifted-base-triangle}
\end{figure}

The boundary of $B_c$ consists of eight base triangles $\Delta_{h,R}$, each
with four lifts because its interior is disjoint from the branch set.  Thus
the lifted boundary has $32$ triangles.  Appendix~B lists the local
$Q$-endpoint assignment for each base triangle and crossing type; their four
lifts are precisely these $32$ triangles.

\subsection{The three standard local triangulations}
\label{subsec:standard-local-triangulations}

We now introduce the boundary-marked local PL models for the three crossing
types.  They will be used in the global gluing of Section~6.5 and in the
comparison with the finite tables of Appendix~A in Section~7.

For $t\in\{1,2,3\}$, write $B^{(t)}$ for the banana with the representative
labelling of type $t$ in Figure~\ref{fig:crossing-types}; this is the
\emph{base banana} when it is distinguished from its lifts.  Write
$\widetilde B^{(t)}$ for the pullback of the covering map over $B^{(t)}$.
We construct a finite boundary-marked generalized triangulation $X^{(t)}$
whose geometric realization, denoted by $|X^{(t)}|$, is PL homeomorphic to
$\widetilde B^{(t)}$.  We call $X^{(t)}$ the \emph{standard local
triangulation} of type $t$.

The orbits of the subgroup of $S_4$ generated by all labels at the crossing
are the \emph{component orbits} $\Omega$ and index the connected components
of the pullback.  They need not equal the local edge-label orbits
$[i]_{\tau_e}$ of Section~6.2, which index lifted vertices over one boundary
point.  For example, type~2 has component orbits $\{1,2,3\}$ and $\{4\}$,
whereas a half-edge labelled $(12)$ has local orbits $\{1,2\}$, $\{3\}$, and
$\{4\}$.

Table~\ref{tab:local-components-summary} summarizes the component orbits
and their topology in $\widetilde B^{(t)}$, together with the number of
tetrahedra in the corresponding parts of $X^{(t)}$.

\begin{table}[H]
  \centering
  \small
  \setlength{\tabcolsep}{4pt}
  \renewcommand{\arraystretch}{1.15}
  \begin{tabular}{@{}c c >{\raggedright\arraybackslash}p{.43\textwidth}
      >{\centering\arraybackslash}p{.18\textwidth}@{}}
    \toprule
    Type & Shared sheets & Components of $\widetilde B^{(t)}$
      & Tetrahedra in $X^{(t)}$ \\
    \midrule
    1 & 2 & solid torus over $\{1,2\}$; $3$-balls over $\{3\}$ and $\{4\}$
      & $18+8+8$ \\
    2 & 1 & $3$-balls over $\{1,2,3\}$ and $\{4\}$
      & $24+8$ \\
    3 & 0 & $3$-balls over $\{1,2\}$ and $\{3,4\}$
      & $16+16$ \\
    \bottomrule
  \end{tabular}
  \caption{Components of $\widetilde B^{(t)}$ and tetrahedron counts in
  $X^{(t)}$.}
  \label{tab:local-components-summary}
\end{table}

The same nine-tetrahedron subdivision of the base banana is used for all
three types; it is shown in
\cref{fig:base-banana-subdivision}.  This is the subdivision of
\cite[Figure~28]{hatakenaka-2010}, expressed in the present vertex notation.

\begin{figure}[!tbp]
  \centering
  \includegraphics[width=.94\textwidth]{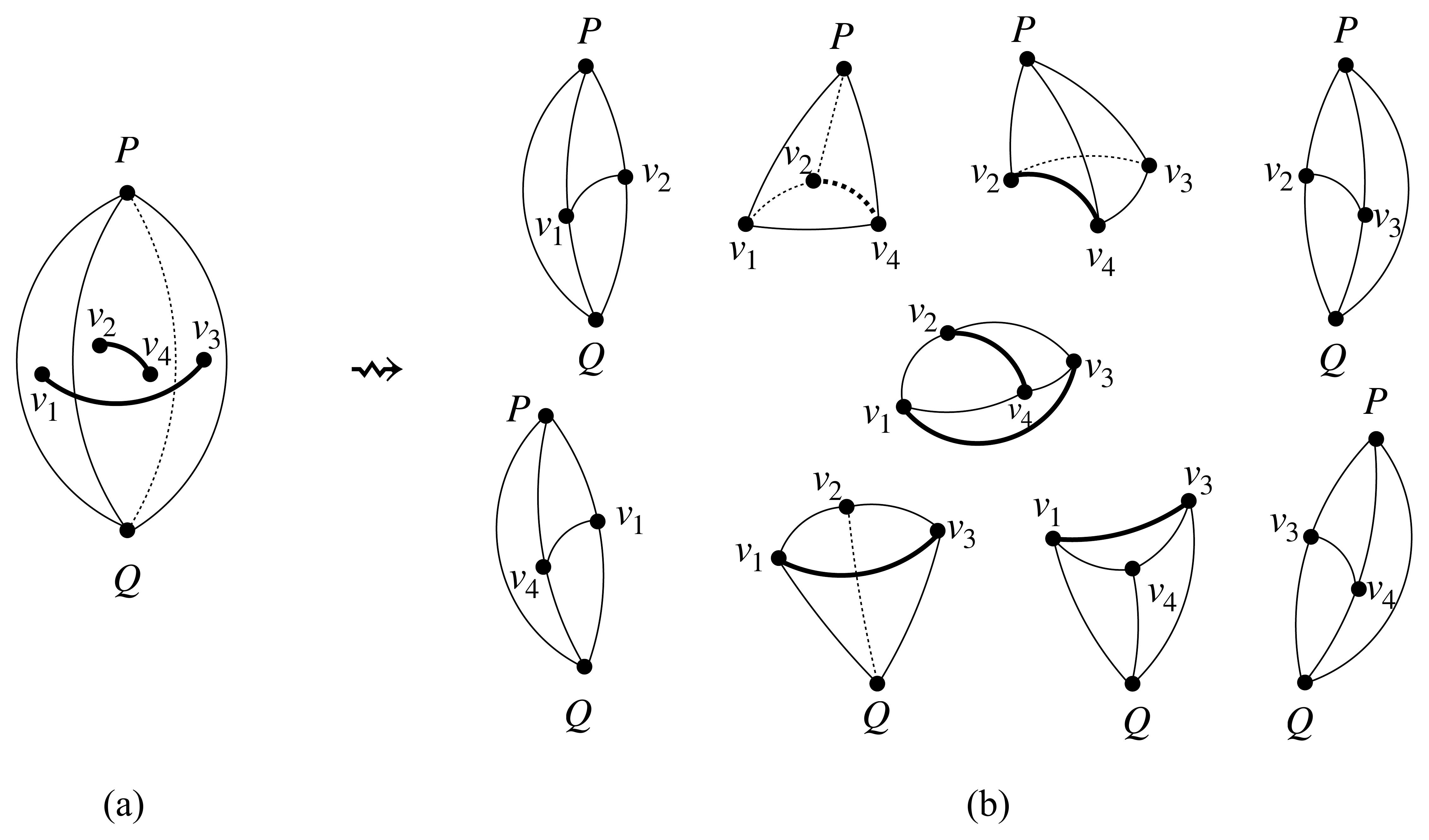}
  \caption{The base banana $B^{(t)}$ and its nine-tetrahedron subdivision.
  The thick subcomplex in (b) represents the two branch arcs; $v_k$
  abbreviates $v(h_k)$.}
  \label{fig:base-banana-subdivision}
\end{figure}

The next lemma identifies every component in
Table~\ref{tab:local-components-summary} from the two trivial branch arcs and
their labels.

\begin{lemma}[Topology of the lifted local components]
\label{lem:local-component-topology}
Let $\Omega$ be a component orbit for the two branch arcs in $B^{(t)}$.
The component of $\widetilde B^{(t)}$ corresponding to $\Omega$ is a solid
torus when $t=1$ and $\Omega=\{1,2\}$.  It is a $3$-ball in all the other
cases in Table~\ref{tab:local-components-summary}.
\end{lemma}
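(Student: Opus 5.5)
The banana $B^{(t)}$ is a $3$-ball containing two properly embedded, unknotted and unlinked arcs: the over-arc and the under-arc. Together they form a trivial $2$-string tangle. I would use this to describe each component of the pullback directly. Since both arcs are trivial, $B^{(t)}$ can be cut along two disjoint properly embedded discs $D_\sigma$ and $D_\tau$. Each disc has boundary made of one arc together with an arc on $\partial B^{(t)}$, and the discs are disjoint because the tangle is trivial. The complement of the two discs is a $3$-ball $B_0$, and the branched cover is obtained from four copies $B_0\times\{i\}$ by the standard cut-and-paste. The copies of the two sides of $D_\sigma$ are glued by $\sigma$, and those of $D_\tau$ by $\tau$. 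Here I take $\tau$ to be the label of the under-arc on the side used for the cut, which after conjugation is legitimate. A component over an orbit $\Omega$ is then obtained from $|\Omega|$ balls glued along discs in their boundaries. This is a handle decomposition with $|\Omega|$ zero-handles and some one-handles, one for each pair $\{i,\sigma(i)\}$ or $\{i,\tau(i)\}$ with $i\neq\sigma(i)$, respectively $i\neq\tau(i)$, inside $\Omega$.

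Equivalently, I would count with an Euler characteristic, or with the gluing graph: vertices are the sheets in $\Omega$, and edges are the nontrivial transposition pairs of $\sigma$ and of $\tau$ inside $\Omega$. Gluing balls along disjoint boundary discs in the pattern of a connected graph gives a $3$-ball when the graph is a tree. In general it gives a handlebody of genus equal to the first Betti number of the graph.

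I would then go through the cases of Table~\ref{tab:local-components-summary}.
\begin{itemize}
\item Type~1, $\sigma=\tau=(12)$: over $\{1,2\}$ there are two vertices joined by two edges, so the graph has Betti number $1$ and the component is a solid torus. Over $\{3\}$ and over $\{4\}$ there is a single vertex with no edges, so each is a $3$-ball. These are just product copies of $B_0$, in fact copies of $B^{(1)}$.
\item Type~2, $\sigma=(23)$, $\tau=(12)$: over $\{1,2,3\}$ the graph is the path $1$--$2$--$3$, a tree, so the component is a ball. Over $\{4\}$ the component is a single ball.
\item Type~3, $\sigma=(34)$, $\tau=(12)$: over $\{1,2\}$ there is one edge, giving two balls glued along one disc, which is a ball. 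The same holds over $\{3,4\}$.
\end{itemize}
These are exactly the stated topologies.

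The main obstacle is justifying the cut-and-paste model rigorously. That means showing that the two arcs bound disjoint spanning discs, and that the gluing discs on each copy of $B_0$ are pairwise disjoint, so that the one-handle description is valid. For the first point I would invoke the nine-tetrahedron subdivision of \cref{fig:base-banana-subdivision}: there the thick branch subcomplex is visibly boundary-parallel, and the spanning discs can be read off as unions of triangles.

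An alternative route, worth doing as a cross-check, uses the known branched double covers. The double cover of $B^3$ branched over one trivial arc is $B^3$, and the double cover branched over two trivial arcs is $S^1\times D^2$. This confirms type~1 over $\{1,2\}$. The type~3 components are each double covers over a single trivial arc, hence balls. Type~2 is the classical three-fold simple cover of a trivial $2$-tangle with connected monodromy, which is a ball. This last fact also follows from the tree criterion above.
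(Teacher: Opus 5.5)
Your proposal is correct and is essentially the paper's argument. The paper also treats each component as a branched cover of the trivial two-string tangle: the type~1 component over $\{1,2\}$ is the double branched cover $A\times I$, the type~3 components are double covers over a single trivial arc, the singleton orbits are copies of the base ball, and the type~2 component is three sheets joined successively along the two branch discs, which is exactly your tree criterion (your cross-check reproduces the other cases). In your gluing-graph version, make explicit that the two sides $D^{+}$ and $D^{-}$ of each cut meet along the branch arc rather than being disjoint, so the two identifications between sheets $i$ and $\sigma(i)$ form a single gluing along the disc $D^{+}\cup D^{-}$ (one edge per transposition pair), and that orientability of the cover rules out a solid Klein bottle in the Betti-number-one case.
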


\begin{proof}
The local covering types are recorded in
\cite[Figures~25--27]{hatakenaka-2010}.  For type~1 and
$\Omega=\{1,2\}$, the component is the double cover of a $3$-ball branched
over a trivial two-string tangle, hence is a solid torus; equivalently, the
tangle is $(D^2\times I,P\times I)$ and its double branched cover is
$A\times I$, where $A$ is an annulus
\cite[Section~2]{lackenby-2021}.  A singleton orbit gives one copy of the
base $3$-ball.  In type~2, the three sheets over $\{1,2,3\}$ are joined
successively along the two branch disks, and the result is again a
$3$-ball.  In type~3, each two-element orbit sees one trivial branch arc,
whose double branched cover is a $3$-ball.  These are exactly the remaining
cases in Table~\ref{tab:local-components-summary}.
\end{proof}

The component of $X^{(1)}$ over the component orbit $\{1,2\}$ is obtained
from two lifts of the nine-tetrahedron subdivision in
Figure~\ref{fig:base-banana-subdivision}.  In the representative type~1
model, the normalization over $R_{12}$ joins $P_i$ to $Q_i$.  Since all four
half-edge labels are $(12)$, the $Q$-endpoint alternates around the crossing:
over $R_{12}$ and $R_{34}$ the lift beginning at $P_i$ ends at $Q_i$, whereas
over $R_{23}$ and $R_{41}$ it ends at $Q_{(12)(i)}$.  These assignments
identify the internal faces within and between the two lifts and give the
eighteen-tetrahedron solid-torus component shown in
Figure~\ref{fig:type1-solid-torus}.  Appendices~C.1--C.2 record the eighteen
ordered tetrahedra and the twenty-eight internal face pairings, while
Appendices~C.3--C.5 verify the sixteen boundary faces, the deck involution
and quotient, and hence the boundary-marked identification with the
corresponding lifted component.

\begin{figure}[!tbp]
  \centering
  \includegraphics[width=.50\textwidth]{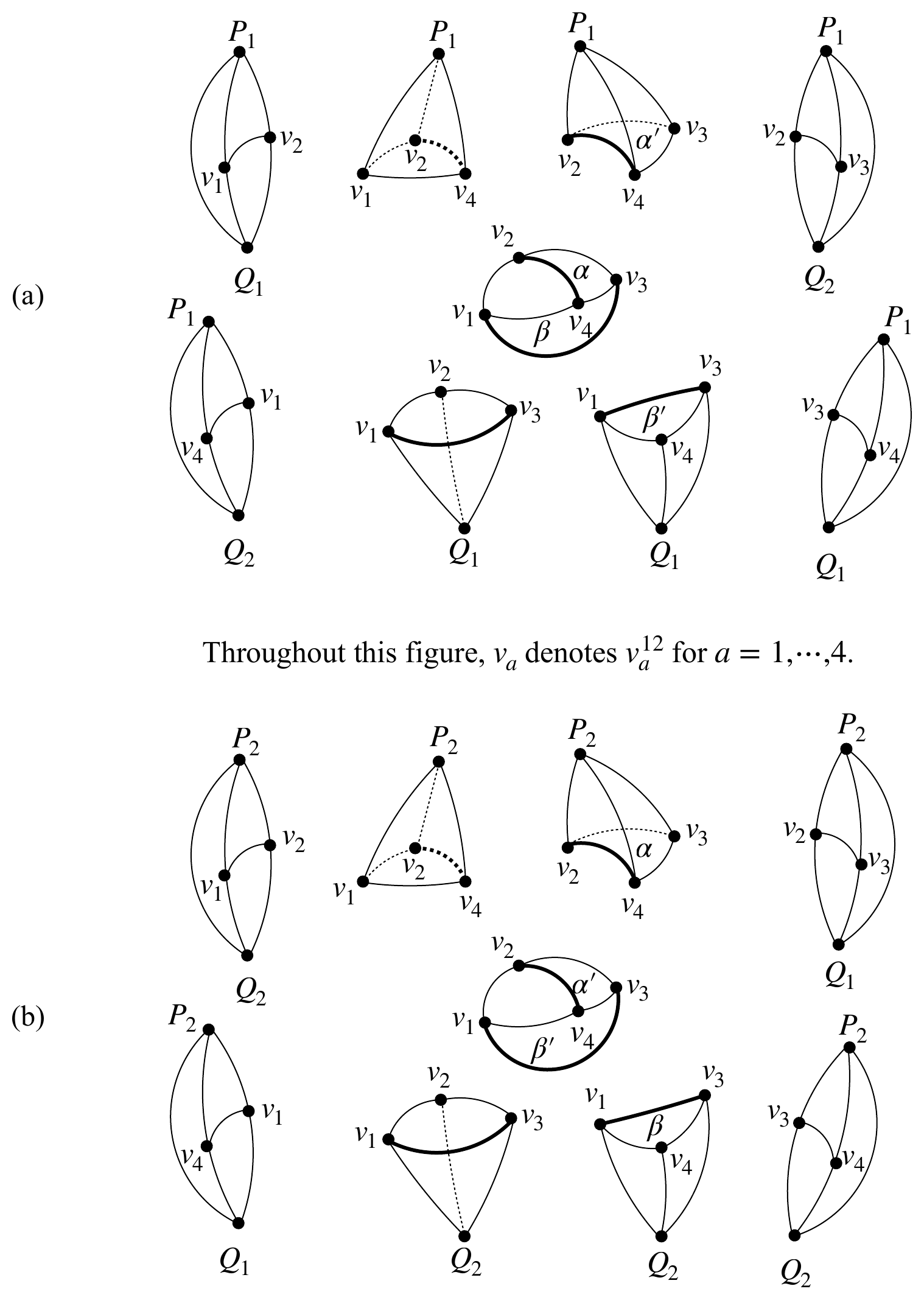}
  \caption{The eighteen-tetrahedron component of $X^{(1)}$ over
  $\Omega=\{1,2\}$.  Faces with the same vertex set or the same pairing tag
  are paired; the other sixteen faces form the boundary.  Here $v_a=v_a^{12}$.}
  \label{fig:type1-solid-torus}
\end{figure}

The sixteen unpaired faces in Figure~\ref{fig:type1-solid-torus} form the
boundary torus displayed in Figure~\ref{fig:type1-boundary-torus}.

\begin{figure}[H]
  \centering
  \includegraphics[width=.78\textwidth]{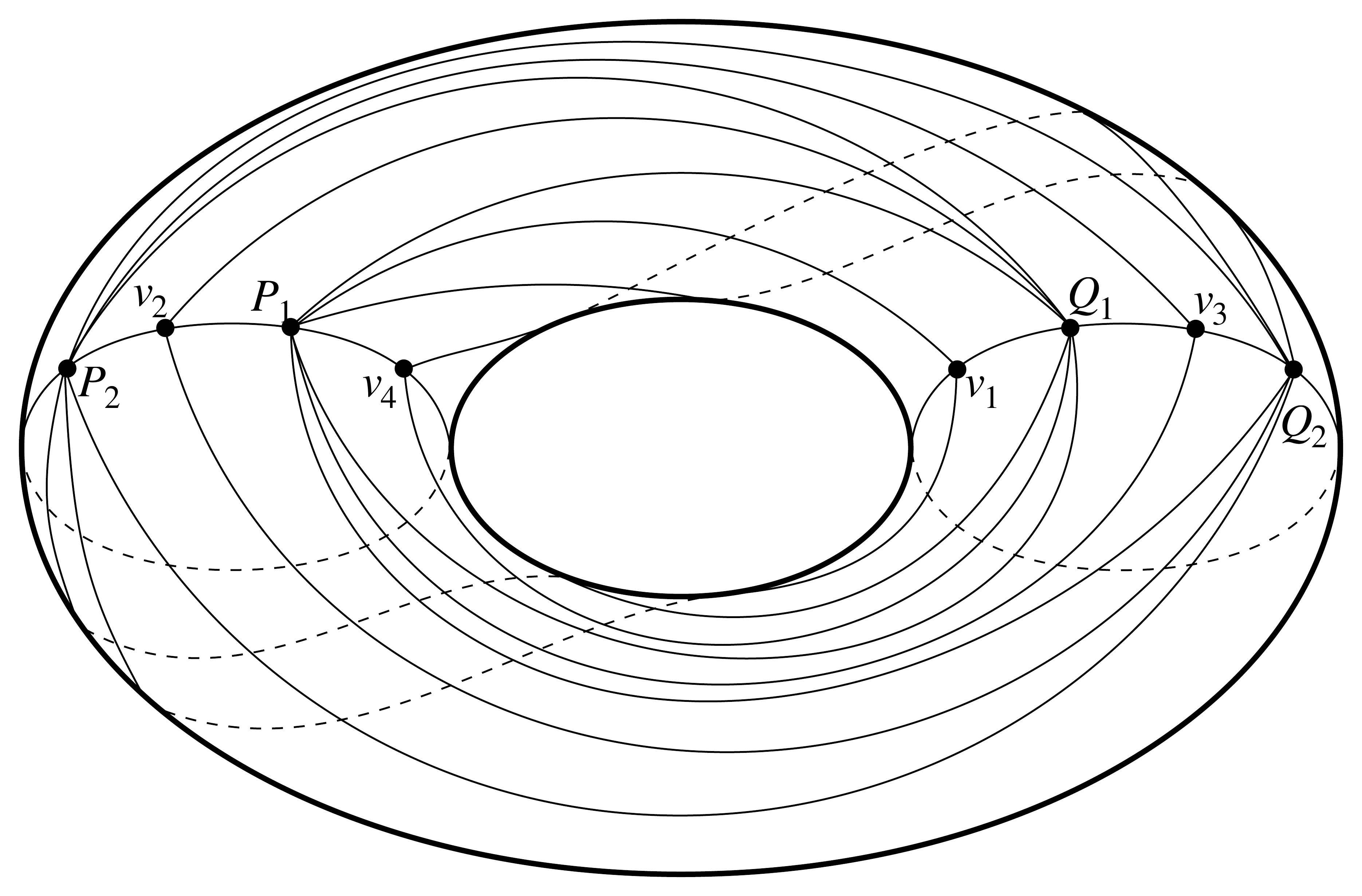}
  \caption{The boundary torus of the $X^{(1)}$-component over
  $\Omega=\{1,2\}$.  The thick outlines are not triangulation edges; the
  dashed segment indicates continuation across the back.  Here $v_a=v_a^{12}$.}
  \label{fig:type1-boundary-torus}
\end{figure}

The remaining components of the standard models $X^{(t)}$ are defined as
cones.  For the component orbit $\{1,2,3\}$ of type~2 and the two component
orbits of type~3, we cone the triangulated $2$-spheres shown in
Figure~\ref{fig:cone-boundary-spheres}.  The resulting $3$-balls contain
$24$, $16$, and $16$ tetrahedra, respectively.  In the figure, the
superscript $I$ on $v_a^I$ is the local edge-label orbit determined by the
transposition labelling the graph edge containing the corresponding
half-edge; it is distinct from the component orbit $\Omega$.

For the singleton component orbits $\{3\}$ and $\{4\}$ of type~1 and
$\{4\}$ of type~2, the lifted component is one sheet copy of the base banana
and hence is a $3$-ball.  Its boundary carries the eight-triangle subdivision
described in Section~6.1 and shown in
Figure~\ref{fig:banana-boundary}(c).  Adding one interior vertex and coning the
boundary gives a triangulation with eight tetrahedra.

\begin{figure}[H]
  \centering
  \includegraphics[width=.94\textwidth]{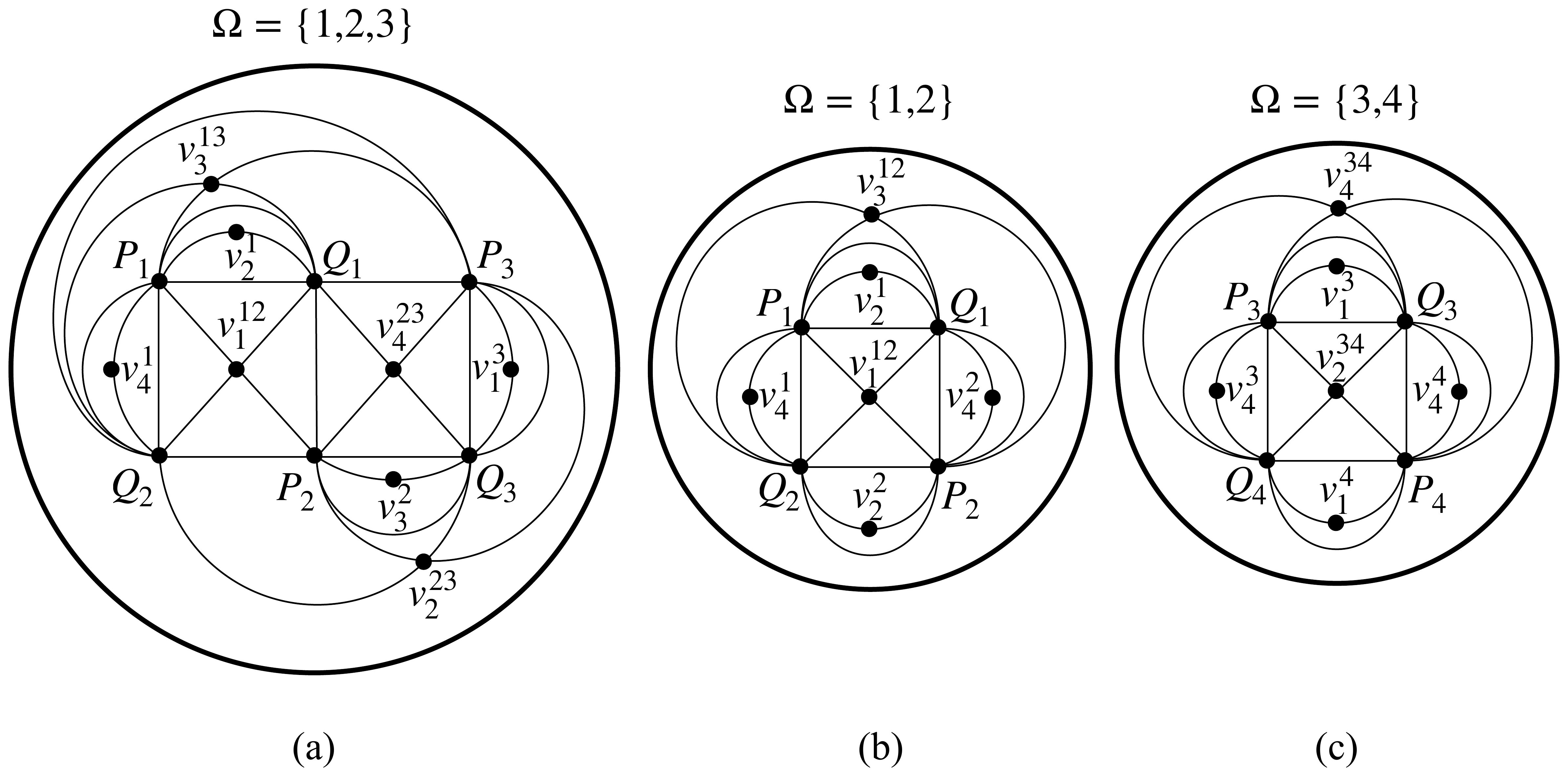}
  \caption{Boundary triangulations for the cone components of $X^{(t)}$:
  (a) type~2 over
  $\Omega=\{1,2,3\}$; (b) type~3 over $\Omega=\{1,2\}$; and (c) type~3 over
  $\Omega=\{3,4\}$.  The heavy outer circles belong only to the planar
  display.}
  \label{fig:cone-boundary-spheres}
\end{figure}

We now identify each standard local triangulation with the corresponding
component of the pullback while preserving the boundary labels.

\begin{proposition}[Standard local triangulations and local lifts]
\label{prop:standard-local-triangulations}
For each $t\in\{1,2,3\}$, let $\widetilde B^{(t)}$ be the pullback over the
labelled banana $B^{(t)}$, and let $X^{(t)}$ be the standard local
triangulation constructed above.  There is a PL homeomorphism from its
geometric realization,
\[
  \phi^{(t)}\colon |X^{(t)}|\longrightarrow \widetilde B^{(t)}
\]
that preserves the component-orbit labels and the boundary marking specified
in Section~6.2 and listed completely in Appendix~B.
\end{proposition}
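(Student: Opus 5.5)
The plan is to build $\phi^{(t)}$ one component orbit at a time. Both sides split as disjoint unions over the component orbits $\Omega$. On the left, $X^{(t)}$ is by construction a disjoint union of pieces indexed by $\Omega$. On the right, the components of $\widetilde B^{(t)}$ are indexed by the same orbits. It therefore suffices, for each $\Omega$ in Table~\ref{tab:local-components-summary}, to give a PL homeomorphism from the corresponding piece of $|X^{(t)}|$ to the corresponding component of $\widetilde B^{(t)}$ that respects the boundary marking. The key principle is this. Both sides are compact $3$-manifolds with boundary. If the boundary triangulations agree as marked complexes and the topological types agree, then a simplicial isomorphism of the boundaries extends over the interiors, at least when the pieces are balls. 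For the solid torus, the extension additionally requires the meridian to match.

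The first step handles the boundaries. Appendix~B lists the $32$ lifted triangles $\Delta^{\,i}_{h,R}$ with ordered vertices $(P_i,Q_j,v(h)^I)$. For each $\Omega$, I take the lifts with $i\in\Omega$. Their union is the boundary of the corresponding component of $\widetilde B^{(t)}$, because the branch set meets $\partial B^{(t)}$ only in the points $v(h)$ and the pullback restricted to the boundary is the induced cover of $S^2$ branched at four points. I then check, entry by entry, that these lifted triangles and their edge adjacencies coincide with the unpaired faces of the piece of $X^{(t)}$. For the cone pieces, these are the triangulated spheres of Figure~\ref{fig:cone-boundary-spheres} together with the eight-triangle spheres for singleton orbits. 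For the type~1 solid-torus piece, they are the sixteen faces of Appendix~C.3. Adjacency along $P$--$Q$ edges is governed by $\mu_R$, and adjacency at $v(h)^I$ by the orbits $[i]_{\tau_e}$. This gives a label-preserving simplicial isomorphism $\partial_\Omega X^{(t)}\to\partial\widetilde B^{(t)}_\Omega$.

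The second step extends over the interiors. By Lemma~\ref{lem:local-component-topology}, every component except the type~1 component over $\{1,2\}$ is a $3$-ball. The corresponding piece of $X^{(t)}$ is a cone on a $2$-sphere, hence a PL $3$-ball. I extend the boundary isomorphism by coning, using the Alexander trick in the PL category~\cite{rourke-sanderson-1982}. This covers the type~2 orbit $\{1,2,3\}$, both type~3 orbits, and all singleton orbits, the last of which are literally copies of the base banana with its eight-triangle boundary.

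The main obstacle is the solid torus. Here a boundary isomorphism extends only if it carries a meridian of $|X^{(1)}_{\{1,2\}}|$ to a meridian of $\widetilde B^{(1)}_{\{1,2\}}$. To handle this, I construct the homeomorphism directly instead of extending abstractly. The nine-tetrahedron subdivision of Figure~\ref{fig:base-banana-subdivision} is a triangulation of $B^{(1)}$ containing the two branch arcs as a subcomplex. Its pullback under the double branched cover is therefore a triangulation of $\widetilde B^{(1)}_{\{1,2\}}$, obtained by taking two sheet copies and regluing the faces adjacent to the branch arcs according to the transposition $(12)$. The deck involution and quotient data of Appendices~C.4--C.5 show that the eighteen tetrahedra and twenty-eight internal pairings of Appendices~C.1--C.2 are exactly this pullback. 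In particular, the quotient of $X^{(1)}_{\{1,2\}}$ by the involution is the nine-tetrahedron banana, with the fixed set equal to the branch arcs. This yields a simplicial isomorphism onto the pullback triangulation, and hence a PL homeomorphism preserving the boundary marking.

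If the appendix checks are organised as a comparison of face lists, this step reduces to a finite verification. Only this case needs a geometric argument, because only here can a marked boundary isomorphism fail to extend.
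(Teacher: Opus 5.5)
Your proposal is correct and is essentially the paper's own proof. You build the map componentwise and extend the Appendix~B simplicial identification of the boundaries over each $3$-ball component (the cone pieces and the singleton copies of the banana) by the PL Alexander trick; you then handle the type~1 component over $\{1,2\}$ separately using the deck involution and quotient data of Appendix~C, which is exactly how the paper avoids the meridian issue you correctly flag.

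One description is imprecise: you say the pullback reglues ``the faces adjacent to the branch arcs''. In fact the crosswise regluing in Table~C.5 is along a spanning disk formed by the four face classes $Qv_1v_4$, $v_1v_3v_4$, $v_2v_3v_4$, $Pv_2v_3$, and two of these contain no branch edge. This does not affect the argument, because your identification rests on the Appendix~C data anyway.
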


\begin{proof}
For the type~1 component over $\{1,2\}$, Appendices~C.1--C.5 give the
required boundary-marked PL identification.  Every other lifted component
is a $3$-ball by Lemma~\ref{lem:local-component-topology}, while the
corresponding component of $X^{(t)}$ is the cone on its boundary $2$-sphere.
Appendix~B identifies the boundaries simplicially, and the PL Alexander
extension theorem~\cite{rourke-sanderson-1982} extends each identification
over the two $3$-balls.  The componentwise maps assemble to $\phi^{(t)}$.
\end{proof}

\subsection{Transport to an arbitrary crossing}
\label{subsec:transport-to-crossing}

Let $c$ be a crossing of type $t$.  Match the representative crossing of
type~$t$ in Figure~\ref{fig:crossing-types} with $c$ by a rotation or reflection
that takes its over-strand and under-strand to those of $c$, and relabel the
four sheets simultaneously.  Transport the boundary labels of $X^{(t)}$ by
this matching and the region transports $\mu_R$, and denote the resulting
boundary-labelled copy by $X_c$.  Thus only the boundary labels have changed:
the underlying abstract simplicial complex is unchanged, and
$X_c\cong X^{(t)}$.

\subsection{Gluing boundary triangles along graph edges}
\label{subsec:boundary-gluing}

We now glue the local models $X_c$ along corresponding lifted boundary
triangles.  Let a graph edge $e$ have half-edges $h$ and $k$ at crossings
$c$ and $d$, respectively, where $c=d$ is allowed.  For either
$R\in\{R_1(e),R_2(e)\}$ and each sheet label $i$, put
$j=\mu_R(i)$ and $I=[i]_{\tau_e}$.  Sections~6.2 and 6.4 give unique boundary
triangles with ordered vertices
\[
  \Delta_{c;h,R}^{\,i}
    =\bigl(P_i,Q_j,v(h)^I\bigr),
  \qquad
  \Delta_{d;k,R}^{\,i}
    =\bigl(P_i,Q_j,v(k)^I\bigr),
\]
one at each end of $e$.

\begin{definition}[The face-pairing complex]
\label{def:face-pairing-complex}
Let
\[
  g_{e,R,i}\colon
  \Delta_{c;h,R}^{\,i}\longrightarrow\Delta_{d;k,R}^{\,i}
\]
be the simplicial isomorphism determined by
\[
  P_i\text{ at }c\longmapsto P_i\text{ at }d,\qquad
  Q_j\text{ at }c\longmapsto Q_j\text{ at }d,\qquad
  v(h)^I\longmapsto v(k)^I.
\]
Perform these pairings for both regions incident to every graph edge and for
all four sheet labels.  If $e$ is a loop, the two triangles are distinct
boundary faces of the same local model $X_c$.  These pairings define a finite
face-pairing complex $K(D)$.  Its geometric realization is denoted by
$|K(D)|$; explicitly,
\[
  |K(D)|=\left(\bigsqcup_{c\in\Cross(D)}|X_c|\right)\big/\!\sim,
\]
where $\sim$ is generated by the face pairings above.
\end{definition}

Figure~\ref{fig:boundary-triangle-pairing} illustrates this pairing for a
non-loop edge.

\begin{figure}[H]
  \centering
  \includegraphics[width=.88\textwidth]{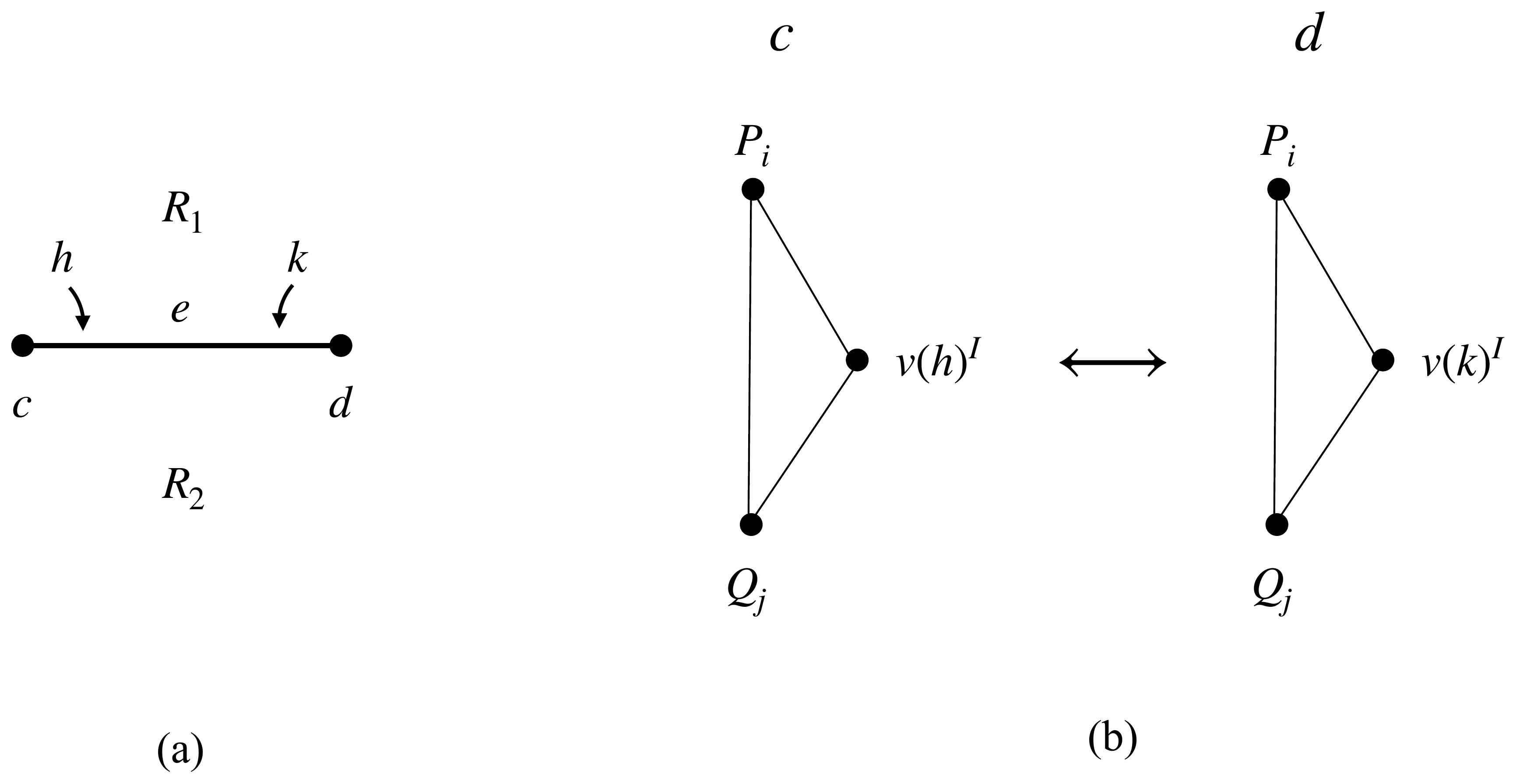}
  \caption{Boundary-triangle pairing across a non-loop graph edge $e$.
  For $R\in\{R_1(e),R_2(e)\}$, $j=\mu_R(i)$ and
  $I=[i]_{\tau_e}$.}
  \label{fig:boundary-triangle-pairing}
\end{figure}

Every boundary triangle belongs to exactly one of these pairings: it is
specified by its half-edge, adjacent region, and sheet label, and the other
end of the graph edge determines its unique partner.  Thus no boundary face
remains unpaired.  By
Corollary~\ref{cor:base-region}, changing the base region or uniformly
renumbering the sheets only relabels its vertices and gives a combinatorially
isomorphic face-pairing complex.

\subsection{Identification with the covering manifold}
\label{subsec:covering-identification}

We now identify the two spaces.

Let $\widetilde B_c$ be the pullback introduced in Section~6.2.  By construction,
gluing the spaces $\widetilde B_c$ along the lifted banana faces gives
$M(D)$.  Proposition~\ref{prop:standard-local-triangulations} and
Section~6.4 give boundary-marked PL homeomorphisms
$\phi_c\colon |X_c|\to\widetilde B_c$.  Because they preserve the boundary
markings, these maps take every pairing $g_{e,R,i}$ to the corresponding
lifted face pairing.

\begin{theorem}[Identification of the global complex]
\label{thm:global-complex-identification}
There is a PL homeomorphism
\[
  |K(D)|\cong_{\mathrm{PL}} M(D).
\]
\end{theorem}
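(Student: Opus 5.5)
The plan is to assemble the local homeomorphisms $\phi_c\colon|X_c|\to\widetilde B_c$ into one global map and show that it descends to a PL homeomorphism of the quotients. First I would fix the model of $M(D)$ being used. By Section~6.1 the bananas $B_c$, glued along their suspension faces by the coned side identifications of the quadrilaterals $U_c$, form $S^3$, and the labelled link $D$ sits inside as the union of the local crossing tangles. Pulling back $p_D$ along each map $B_c\to S^3$ gives $\widetilde B_c$. The restriction of $p_D$ over the interior of each banana face is a trivial four-fold cover, because the branch set meets a face only in the points $v(h)$. Hence $M(D)$ is the quotient of $\bigsqcup_c\widetilde B_c$ by the lifts of the banana face identifications: a point of $M(D)$ lying over a face point has exactly one preimage in each of the two adjacent pullbacks, and these are identified.

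The second step is to check that these lifted identifications are exactly the maps $g_{e,R,i}$ of Definition~\ref{def:face-pairing-complex}, transported through the $\phi_c$. Each lifted boundary triangle over $\Delta_{h,R}$ is determined by which lift $P_i$ it contains. The lift of the glued base triangle at the other end $k$ of $e$ that contains the same global point $P_i$ is $\Delta^{\,i}_{d;k,R}$. For the $Q$-vertices, both ends use the same global transport $Q_{\mu_R(i)}$. This is well defined by Lemma~\ref{lem:region-transport}, and Section~6.2 records that the local normalizations in Appendix~B agree with $\mu_R$ after the relabelling of Section~6.4. For the branch vertex, both $v(h)$ and $v(k)$ lie over the same point of the edge $e$ with label $\tau_e$, so their lifts are indexed by the same orbit $I=[i]_{\tau_e}$.

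It follows that $\phi_{d}\circ g_{e,R,i}=\tilde g\circ\phi_c$ on $\Delta^{\,i}_{c;h,R}$, where $\tilde g$ is the lifted face identification. Strictly, the two sides are equal as maps of triangles only after an isotopy rel vertices. To remove this, I would first adjust each $\phi_c$ so that it is simplicial, or at least linear, on the boundary with respect to a fixed triangulation of the faces of $\widetilde B_c$ lifted from the banana faces. Proposition~\ref{prop:standard-local-triangulations} already identifies the boundaries simplicially, so it suffices to take the lifted face structure as that triangulation.

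The third step is formal. The disjoint union $\bigsqcup\phi_c$ respects the generating relations on both sides, so it induces a continuous bijection $|K(D)|\to M(D)$: surjective since each $\phi_c$ is, and injective because the relations correspond one-to-one, every boundary triangle being paired exactly once on both sides. A continuous bijection from a compact space to a Hausdorff space is a homeomorphism. Since each $\phi_c$ is PL and the gluings are PL, it is PL.

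I expect the main obstacle to be the second step. This is the careful verification that the boundary markings from Appendix~B, transported by the plane symmetries and sheet relabellings of Section~6.4, really produce the global $Q$-indices $\mu_R(i)$ consistently at both ends of every edge, including loop edges and edges where the same region occupies several sectors. I would argue sector by sector, using only the right-update convention for $\mu_R$ and the fact that within one sector the $P$--$Q$ lifts are determined by the monodromy along a dual path.
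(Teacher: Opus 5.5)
Your proposal is correct and takes the same approach as the paper: $M(D)$ is the union of the pullbacks $\widetilde B_c$ glued along lifted banana faces, the boundary-marked maps $\phi_c$ of Proposition~\ref{prop:standard-local-triangulations} carry each pairing $g_{e,R,i}$ to the corresponding lifted face pairing, and $\bigsqcup_c\phi_c$ descends to a homeomorphism $|K(D)|\to M(D)$ that is PL after a common subdivision. Your additional checks spell out what the paper asserts in a line or two, relying on the simplicial boundary identification in the proof of Proposition~\ref{prop:standard-local-triangulations}: the quotient description of $M(D)$, the consistency of the $Q_{\mu_R(i)}$ and $v(h)^I$ labels at both ends of each edge, and making $\phi_c$ linear on the lifted boundary triangles so the face pairings match exactly.
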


\begin{proof}
The disjoint union of the maps $\phi_c$ therefore descends to a homeomorphism
$|K(D)|\to M(D)$.  After a common subdivision of the finitely many pieces,
this map and its inverse are simplexwise linear, so the homeomorphism is PL.
\end{proof}

\begin{proposition}[Global vertex count]
\label{prop:global-vertex-count}
Suppose that $\Gamma_D$ is connected and has $m$ graph edges and $n$
vertices.  The number of global vertices of $K(D)$ is
\[
  8+3m+2n=N(D).
\]
In particular, $m=2n$ and this number is $8+8n$.
\end{proposition}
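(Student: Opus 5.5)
The plan is to count the vertex classes of $K(D)$ by sorting every vertex of every local model $X_c$ into one of three kinds. The first kind consists of the lifts $P_1,\dots,P_4$ and $Q_1,\dots,Q_4$ of the suspension points. The second kind consists of the lifted branch endpoints $v(h)^I$ on the boundary. The third kind consists of the interior vertices of the standard models. I would then show that the face pairings $g_{e,R,i}$ of Definition~\ref{def:face-pairing-complex} identify vertices only within a kind and with the same index data. Since the pairings are simplicial isomorphisms of triangles, the only vertex identifications in $|K(D)|$ are the ones generated by the vertex assignments of the $g_{e,R,i}$.

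\emph{Step 1 ($P$- and $Q$-vertices).} Each pairing $g_{e,R,i}$ sends $P_i$ at $c$ to $P_i$ at $d$, and $Q_j$ at $c$ to $Q_j$ at $d$. Hence no $P$-vertex is ever identified with a $Q$-vertex, and the sheet index is never changed. I would check from Appendix~B that every $P_i$ and every $Q_j$ of $X_c$ does occur in some boundary triangle $\Delta^{\,i}_{c;h,R}$. This holds because each base triangle has four lifts, which between them meet all four $P_i$ and all four $Q_j$. It follows that $P_i$ at $c$ is identified with $P_i$ at $d$ whenever $c$ and $d$ share a graph edge. Since $\Gamma_D$ is connected, all copies of $P_i$ then become a single class. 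The same argument applies to the $Q$-vertices, using the fact that $Q_j$ appears at the other end of the same edge with the same index $j=\mu_R(i)$, by well-definedness of the region transport (Lemma~\ref{lem:region-transport}). This gives exactly $4+4=8$ classes.

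\emph{Step 2 (branch vertices).} For a graph edge $e$ with ends $h$ and $k$, the pairings send $v(h)^I$ to $v(k)^I$ with $I=[i]_{\tau_e}$. The partition of $\{1,2,3,4\}$ into orbits of a transposition has $3$ blocks, so each edge contributes exactly $3$ classes. These classes are never merged with vertices over other edges: every triangle containing $v(h)^I$ is a lift of $\Delta_{h,R}$, so it is paired only across $e$. This is still true for a loop edge, where $h\neq k$ are distinct half-edges. So this kind contributes $3m$ classes.

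\emph{Step 3 (interior vertices).} I would read off from the construction in Section~\ref{subsec:standard-local-triangulations} that each cone component has exactly one interior cone point. This covers the components over $\{3\},\{4\}$ in type~1, over $\{1,2,3\},\{4\}$ in type~2, and over $\{1,2\},\{3,4\}$ in type~3. I would also use the vertex list of Appendix~C.1 to confirm that the eighteen-tetrahedron solid torus uses only boundary vertices. Thus every crossing contributes $2$ interior vertices, which are never identified with anything, giving $2n$ classes. Summing the three kinds gives $8+3m+2n$. Finally, each vertex of $\Gamma_D$ is $4$-valent, so $2m=4n$, that is $m=2n$, and the total is $8+6n+2n=8+8n=N(D)$.

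The main obstacle is Step~3 together with the distinctness claims inside a single model. One must verify from Appendices~B and~C that, before gluing, $X^{(t)}$ has no coincidences among its boundary vertices. In particular, the two lifts $v(h)^{I}$ and $v(h)^{I'}$ must be different vertices, and the solid-torus component must contain no hidden interior vertex. I would settle this by directly listing the vertex set of each $X^{(t)}$: it should consist of $8$ $P/Q$-vertices, $12$ branch vertices and $2$ cone points.
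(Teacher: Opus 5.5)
Your proposal is correct and is essentially the paper's proof. It uses the same split into $8$ classes of $P$- and $Q$-vertices, three classes of lifted branch vertices per graph edge, and two interior cone points per crossing, followed by the edge-end count $2m=4n$. Your extra checks are the ones the paper compresses into a line: that the pairings preserve vertex kind and sheet index, that $v(h)^I$ lies only in triangles over $h$, and that the solid torus of Appendix~C has no interior vertex. The within-model distinctness you flag as the main obstacle also follows directly from the boundary-marked PL homeomorphism of Proposition~\ref{prop:standard-local-triangulations}, so you need not list vertices by hand.
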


\begin{proof}
The global $P$- and $Q$-vertices contribute eight classes.  The transposition
on each graph edge has one two-element orbit and two singleton orbits, so the
lifted banana vertices contribute three classes per graph edge.  Each local
model has two interior vertices, which remain distinct from all boundary
vertices and from the interior vertices of the other local models.  Hence the
total is $8+3m+2n$.  Finally, the graph has $2m$ edge ends and four edge ends
at each of its $n$ vertices, including when a graph edge is a loop.  Therefore
$2m=4n$, so $8+3m+2n=8+8n=N(D)$.
\end{proof}

\section{Proof of the reconstruction theorem}
\label{sec:proof-reconstruction}

We continue to use the base region $R_0$ fixed in Section~2.3.  This section
matches the colourings and weights in the diagrammatic and Turaev--Viro state
sums.  After the bijection of Section~7.1, Sections~7.2--7.4 identify each
summand with
$\eta_{r,s}^{N(D)}W_D(\chi)\prod_{c\in\Cross(D)}w_c(\chi,\beta_c)$ and then sum over the
local colourings, proving Theorem~\ref{thm:reconstruction-connected}.

\subsection{Correspondence of colourings}
\label{subsec:colouring-correspondence}

We first match the edge colours of $K(D)$ with the first-stage colours and
the compatible local colours in the finite tables of Section~3.

\begin{theorem}[Colouring correspondence]
\label{thm:colouring-correspondence}
The set $\Adm(K(D))$ of admissible edge colourings is naturally bijective
with the set $\Adm_2(D;r)$ of admissible diagram colourings.
\end{theorem}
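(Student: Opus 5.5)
The plan is to build the bijection by first partitioning the global edge classes $\mathcal E(K(D))$ into two kinds. The first kind lies on the glued lifted banana boundaries; the second kind consists of edges interior to a single local model $X_c$.

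By Section~6.2, every boundary edge of a local model has one of the following forms:
\begin{itemize}
\item a lifted $P$--$Q$ edge over a sector;
\item a lifted $P$--$v(h)$ edge;
\item a lifted $Q$--$v(h)$ edge.
\end{itemize}
I will show that after the pairings $g_{e,R,i}$ of Definition~\ref{def:face-pairing-complex} these edges fall into exactly three families of global classes, matching the first-stage symbols of Section~2.3.
\begin{itemize}
\item The edges $P_i$--$v(h)^I$ and $P_i$--$v(k)^I$ at the two ends of a graph edge $e$ are identified. They form one class for each $(e,i)$, which I label $p_i(e)$.
\item The edges $Q_j$--$v(h)^I$ form one class for each $(e,j)$, which I label $q_j(e)$.
\item The lifted $P$--$Q$ edges over all sectors lying in a global region $R$ that start at $P_i$ form one class, which I label $b_i(R)$.
\end{itemize}
For the last family I propagate the identifications through consecutive sectors around $R$ via the pairings across the edges bounding $R$. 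Connectedness of $R$'s boundary in the dual picture and Lemma~\ref{lem:region-transport} ensure that the endpoint $Q_{\mu_R(i)}$ is consistent, so no further identifications occur. One also has to check that $P_i$--$v(h)^I$ and $P_{\tau_e(i)}$--$v(h)^I$ remain distinct, since their $P$-endpoints differ.

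Every remaining edge of $X_c$ has an interior vertex, or is an interior diagonal not contained in $\partial X_c$. Such an edge is never touched by a pairing, so each is its own global class. I identify these interior edges of $X^{(t)}$ bijectively with the local symbols $x_1,\dots,x_{m_t}$ of Tables~A.2--A.4, transported by the matching of Section~6.4. Table~A.1 gives $m_t=22,20,20$, and this is checked against the edge counts of the components listed in Table~\ref{tab:local-components-summary} and the appendix data.

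With this partition, an edge colouring $\varphi$ of $K(D)$ is the same data as a pair: a colouring $\chi$ of all the symbols $p_i(e),q_j(e),b_i(R)$, together with a tuple $\beta_c\in C_r^{m_t}$ for each crossing $c$. It remains to match the admissibility conditions. Triangle classes of $K(D)$ split into two kinds.
\begin{itemize}
\item The paired boundary triangles $\Delta^{\,i}_{c;h,R}\sim\Delta^{\,i}_{d;k,R}$ have edge classes $\bigl(p_i(e),q_{\mu_R(i)}(e),b_i(R)\bigr)$. These are exactly the triples of Definition~\ref{def:first-stage-colouring}, one for each side of $e$ and each $i$.
\item The interior triangles of each $X_c$ should correspond one-to-one with the triples in $\mathcal A_t$ after substitution.
\end{itemize}
Hence $\varphi\in\Adm(K(D))$ if and only if $\chi\in\Adm_1(D;r)$ and $\beta_c\in\Col_c(\chi)$ for all $c$, which is Definition~\ref{def:diagram-colouring}. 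Naturality, meaning independence of the matching, follows from Lemma~\ref{lem:representative-matching}.

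The main obstacle is the table-level verification: that Tables~A.2--A.4(a) list exactly the interior triangles of $X^{(t)}$, with their three edges read as the correct $p$, $q$, $b$ or $x$ symbols. Table~A.1 supplies the counts 52 and 48 for these triangles. I would try to prove this by direct comparison with the ordered tetrahedra of Appendix~C and the cone descriptions in Figure~\ref{fig:cone-boundary-spheres}, organising the count so that each interior triangle appears once and the boundary triangles are excluded. The $q$-index rule of Section~3.2 must agree with the normalization over $R_{12}$ in Section~6.2. A secondary subtlety is loop edges and regions occupying several sectors. There one must check that the class computation still gives exactly one class per symbol, which holds because the pairings are indexed by half-edges and sectors rather than by global objects.
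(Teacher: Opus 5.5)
Your proposal is correct and follows the paper's proof essentially step by step. You use the same four edge families: $p_i(e)$, $q_j(e)$, $b_i(R)$ (obtained by propagating around the connected boundary of $R$), and the interior local colours $x_k$. You split triangles the same way, into glued boundary triangles giving the first-stage triples and interior triangles giving Tables~A.2(a)--A.4(a), and you appeal to Lemma~\ref{lem:representative-matching} for independence of the matching. The table-level identification you flag as the main obstacle is also not proved separately in the paper; it is taken directly from the appendix data.
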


\begin{proof}
The face pairings of Definition~\ref{def:face-pairing-complex} preserve the
endpoint types, the graph edge, and the relevant sheet label.  Hence the
global edges joining a $P$-vertex to a lifted banana vertex are indexed by
$(e,i)$ and coloured by $p_i(e)$; those joining a $Q$-vertex to a lifted
banana vertex are indexed by $(e,j)$ and coloured by $q_j(e)$.

For a complementary region $R$ and a first sheet label $i$, the endpoint
rule of Sections~6.2 and 6.5 gives the local $P$--$Q$ edge from $P_i$ to
$Q_{\mu_R(i)}$.  The pairings along the boundary of $R$ join precisely the
local edges with the same $(R,i)$.  Thus the global $P$--$Q$ edges are
indexed by $(R,i)$ and coloured by $b_i(R)$.  The remaining global edges lie
inside one $X_c$ and correspond to the local colours
$x_1,\ldots,x_{m_t}$ in the type-$t$ table of Appendix~A.  These four edge
families therefore recover, and are recovered from, a first-stage colouring
$\chi$ and one local colouring $\beta_c$ at each crossing.

The boundary triangles give exactly the first-stage admissibility triples,
and the remaining triangles give those in Tables~A.2(a)--A.4(a).  Thus the
edge colouring is admissible exactly when $\chi$ is first-stage admissible
and every $\beta_c$ lies in $\Col_c(\chi)$.  Independence of the representative
matching follows from Lemma~\ref{lem:representative-matching}.  This proves
the bijection.
\end{proof}

\subsection{Crossing-local factors}
\label{subsec:crossing-local-factors}

We next identify the factors contributed by the local triangulations
$X_c$.  Proposition~\ref{prop:crossing-local-factors} gives the termwise
comparison for a complete diagram colouring.  Figure~\ref{fig:crossing-local-sum}
recalls how these termwise local contributions combine to give $W_c(\chi)$.

\begin{figure}[H]
  \centering
  \includegraphics[width=.88\textwidth]{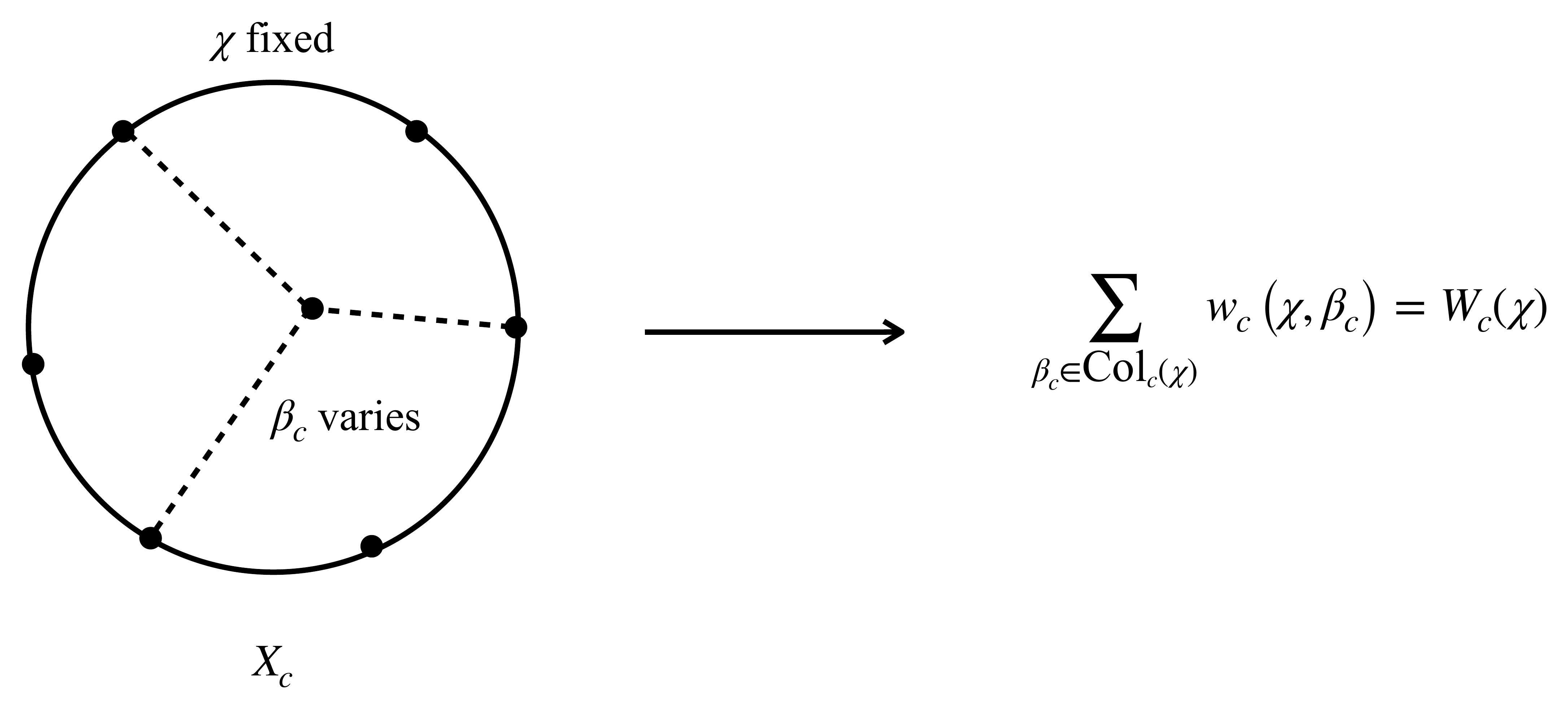}
  \caption{For fixed $\chi$, summing the local contributions over
  $\beta_c\in\Col_c(\chi)$ gives $W_c(\chi)$.}
  \label{fig:crossing-local-sum}
\end{figure}

\begin{proposition}[Crossing-local factors]
\label{prop:crossing-local-factors}
Let a diagram colouring
$\theta=(\chi,\{\beta_c\mid c\in\Cross(D)\})\in\Adm_2(D;r)$ correspond under
Theorem~\ref{thm:colouring-correspondence} to
$\varphi\in\Adm(K(D))$.  The product of the edge, triangle, and tetrahedron
weights belonging to the local triangulations $X_c$ is
\[
  \prod_{c\in\Cross(D)} w_c(\chi,\beta_c).
\]
\end{proposition}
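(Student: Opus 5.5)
We first fix precisely which simplices of $K(D)$ "belong to the local triangulations $X_c$".  For each crossing $c$, these are the global classes of the edges and triangles of $X_c$ that do not lie on $\partial X_c$, together with all tetrahedra of $X_c$.  Boundary triangles and boundary edges (the $P$--$v$, $Q$--$v$ and $P$--$Q$ edges) are excluded; they carry the factors of $W_D(\chi)$ and are treated separately.  Interior simplices of $X_c$ are never identified by the face pairings of Definition~\ref{def:face-pairing-complex}, since those pairings act only on boundary faces.  Hence each such global class has a unique representative in a unique $X_c$.  The product in question therefore factors as a product over $c\in\Cross(D)$ of the interior-edge, interior-triangle and tetrahedron weights of $X_c$ under $\varphi|_{X_c}$.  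It then suffices to prove that, for each fixed $c$, this local product equals $w_c(\chi,\beta_c)$.

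Fix $c$ of type $t$.  By Section~6.4, $X_c$ is the standard model $X^{(t)}$ with transported boundary labels.  By Lemma~\ref{lem:representative-matching} we may use any representative matching, and we fix the one used in Section~3.2 to read the tables at $c$.  The proof of Theorem~\ref{thm:colouring-correspondence} already identifies the interior edges of $X^{(t)}$ bijectively with the local symbols $x_1,\ldots,x_{m_t}$; in particular there are $m_t=22,20,20$ of them.  The edge factor is therefore $\prod_k E(x_k)$.

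For triangles, we check against Table~A.1--A.4(a) that the interior triangles of $X^{(t)}$ correspond bijectively to the triples in $\mathcal A_t$ (52, 48, 48 respectively), after substituting the boundary colours via the relabelling and the rule $q_j\mapsto q_{\mu_R(\ell)}$.  This substitution is the same relabelling that produced the boundary labels of $X_c$, so the colours agree.  As $\Theta$ is symmetric, this gives $\prod_{\mathcal A_t}\Theta$.

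For tetrahedra, we match the tetrahedra of $X^{(t)}$ (34, 32, 32 of them, by Table~\ref{tab:local-components-summary}) with the rows of Table~A.2--A.4(b).  The vertex ordering $0<1<2<3$ used in Section~5.1 must be exactly the one recorded in the column order $01,02,03,12,13,23$, so that $\varphi|_T=U$ as ordered tuples and $\Tet(\varphi|_T)=\Tet(U)$.  Even where an ordering differs, it suffices to note that $\Tet$ is invariant under the tetrahedral symmetry group acting on the six entries.  This follows from its definition: permuting vertices permutes the face sums $A_j$ among themselves and the opposite-edge sums $B_k$ among themselves.

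The main obstacle is this last bookkeeping step, namely confirming that the tables are complete and exact transcriptions of $X^{(t)}$.  Concretely, one must check three things.  Every interior triangle must appear in $\mathcal A_t$ exactly once, including the triangles of the cone components and the solid-torus component, whose listing comes from Appendix~C.  No boundary triple may be duplicated in $\mathcal A_t$, since that would double a factor of $W_D(\chi)$.  Singleton-orbit components, such as the two cones over $\{3\}$ and $\{4\}$ for type~1, must be counted separately even when their colours coincide.  I would discharge this by a direct count using Euler characteristic.  Each component is a ball or a solid torus with known boundary triangulation and known numbers of tetrahedra and interior vertices, so the numbers of interior edges and triangles are forced.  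Matching these forced counts with $m_t$ and $|\mathcal A_t|$, together with the explicit row-by-row identification, gives the bijection.  Multiplying the three factors then yields $w_c(\chi,\beta_c)$ by Definition~\ref{def:crossing-local-contribution}, and taking the product over $c$ proves the proposition.
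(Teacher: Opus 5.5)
Your proposal is correct and follows the paper's route. The paper's proof likewise rests on Appendices~A--C listing the interior edges, the triangle classes $\mathcal A_t$, and the ordered tetrahedra $\mathcal S_t$ of each $X_c$ exactly once, with edge colours in the order of Section~3.4, and then multiplies over the crossings. Your additional remarks are sound elaborations of steps the paper leaves implicit: interior classes are untouched by the boundary face pairings, $\Tet$ is invariant under vertex permutations, and the rows of $\mathcal A_t$ form an indexed product; your Euler-characteristic count is a consistency check on top of that.
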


\begin{proof}
For type $t$, Appendices~A--C list the $m_t$ interior edges, triangle classes
$\mathcal A_t$, and ordered tetrahedra $\mathcal S_t$ of $X_c$, each once
and with edge colours ordered as in Section~3.4.  The product is therefore
$w_c(\chi,\beta_c)$ by Definition~\ref{def:crossing-local-contribution};
multiplying over the crossings proves the assertion.
\end{proof}

\subsection{Global factors}
\label{subsec:global-factors}

We now identify the weights not included in the crossing-local factors,
together with the vertex normalization.

\begin{proposition}[Global factors]
\label{prop:global-factors}
Let $\theta\in\Adm_2(D;r)$ have first-stage part $\chi$, and let
$\varphi\in\Adm(K(D))$ correspond to $\theta$ under
Theorem~\ref{thm:colouring-correspondence}.
The product of all edge and triangle weights not included in the
crossing-local factors is $W_D(\chi)$, and the vertex normalization is
$\eta_{r,s}^{N(D)}$.
\end{proposition}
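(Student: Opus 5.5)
The proof is a bookkeeping argument: sort the global simplex classes of $K(D)$ into those lying in the interior of some $X_c$ and those lying on the glued boundary, and check that the boundary simplices carry exactly the factors in \eqref{eq:global-diagram-factor}. We use the edge classification from the proof of Theorem~\ref{thm:colouring-correspondence}. Every global edge class is of one of four kinds:
\begin{itemize}
\item a $P$--$v$ edge indexed by $(e,i)$ and coloured $p_i(e)$;
\item a $Q$--$v$ edge indexed by $(e,j)$ and coloured $q_j(e)$;
\item a $P$--$Q$ edge indexed by $(R,i)$ and coloured $b_i(R)$;
\item an interior edge of one $X_c$, coloured by one of the local colours $x_k$.
\end{itemize}
Proposition~\ref{prop:crossing-local-factors} already accounts for the fourth kind, together with all interior triangles and all tetrahedra. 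The remaining edge weights are therefore indexed bijectively by $(e,i)$, $(e,j)$ and $(R,i)$, each occurring once. Their product is the $E$-part of $W_D(\chi)$.

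For this step we must confirm that the indexing is injective on global classes. Two boundary edges of the first two kinds are identified only through the pairings $g_{e,R,i}$, and those pairings preserve $e$ and the sheet label. Different $(R,i)$ give different $P$--$Q$ classes because the $P$-endpoint $P_i$ and the region $R$ are both preserved. Surjectivity is immediate, since every $(e,i)$, $(e,j)$ and $(R,i)$ is realized by some boundary triangle. One subtlety needs a remark: the edge $v(h)^I$--$P_i$ and the edge $v(h)^I$--$P_{\tau_e(i)}$ are different edges when $I=\{i,\tau_e(i)\}$, because their $P$-endpoints differ. So the index really is $(e,i)$ and not $(e,I)$.

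Next come the triangles. A global triangle class not interior to any $X_c$ arises from a paired pair of boundary triangles $\Delta^{\,i}_{c;h,R}\sim\Delta^{\,i}_{d;k,R}$. By Definition~\ref{def:face-pairing-complex} these classes are indexed by $(e,a,i)$ with $a\in\{1,2\}$ and $R=R_a(e)$, and the triangle class indexed by $(e,a,i)$ has edge colours $\bigl(p_i(e),q_{\mu_{R_a(e)}(i)}(e),b_i(R_a(e))\bigr)$. This gives exactly the $\Theta$-part of \eqref{eq:global-diagram-factor}. We must also show that distinct indices give distinct global triangles. This holds because each boundary triangle belongs to exactly one pairing, and the triple $(e,R,i)$ is recovered from its vertices $P_i$, $Q_j$, $v(h)^I$ together with the side of $e$.

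The main obstacle is a degenerate case: $R_1(e)=R_2(e)$, or $e$ a loop, or the same global region occupying several sectors at one crossing. In that case the two sides carry the same $(R,i)$ label, and we must check they are still counted as two triangle classes. I would handle this using the side index $a$, which Definition~\ref{def:global-diagram-factor} also uses. The two lifted triangles on the two sides of $e$ lie over distinct base triangles $\Delta_{h,R_{\cdot}}$ adjacent to distinct local sectors. They are therefore distinct boundary faces and are never paired with each other. So the diagrammatic product and the triangulation both contain two factors, and the counts agree.

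Finally, the vertex normalization is $\eta_{r,s}^{|\mathcal V(K(D))|}$. By Proposition~\ref{prop:global-vertex-count}, $|\mathcal V(K(D))|=N(D)$, so it equals $\eta_{r,s}^{N(D)}$.
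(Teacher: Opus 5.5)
Your proposal is correct and is essentially the paper's argument. It uses the same three ingredients, spelled out in more detail: the boundary edge families from Theorem~\ref{thm:colouring-correspondence} give the $E$-factors, the boundary pairings indexed by $(e,a,i)$ give the $\Theta$-factors, and Proposition~\ref{prop:global-vertex-count} gives $\eta_{r,s}^{N(D)}$.

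Two minor remarks. First, the case $R_1(e)=R_2(e)$ that you call the main obstacle never occurs: a connected $4$-valent graph has no bridges, and equivalently Lemma~\ref{lem:region-transport} would force $\tau_e=\mathrm{id}$. Second, your ``injectivity'' check only shows that distinct indices give distinct classes. The converse, that all corners of $R$ are glued together for fixed $i$ (which needs $\partial R$ to be connected), is taken from the cited classification, exactly as in the paper.
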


\begin{proof}
The three global edge families classified in the proof of
Theorem~\ref{thm:colouring-correspondence} give the three edge-weight
products in Definition~\ref{def:global-diagram-factor}.  The boundary
triangles of Section~6.5 are indexed by a graph edge, an incident region,
and a sheet label, and give the remaining triangle-weight product there.
Finally, Proposition~\ref{prop:global-vertex-count} identifies the number
of vertices with $N(D)$, giving the stated normalization.
\end{proof}

\subsection{Equality of the state sums}
\label{subsec:equality-state-sums}

Combining Sections~7.1--7.3 proves the reconstruction theorem.

\begin{proposition}
\label{prop:equality-fixed-base-region}
Fix the base region $R_0$ from Section~2.3, and construct $K(D)$ using this
choice.  If the projection graph of $D$ is connected, then
\[
  Z_{r,s}^{R_0}(D)=\TV_{r,s}(K(D)).
\]
\end{proposition}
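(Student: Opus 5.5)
The proof assembles the results of Sections~7.1--7.3 inside the Turaev--Viro state sum \eqref{eq:closed-tv-state-sum} for $K=K(D)$, using the expansion of Proposition~\ref{prop:complete-colouring-expansion} on the diagrammatic side.

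First I reindex the sum. Theorem~\ref{thm:colouring-correspondence} gives a bijection $\Adm(K(D))\to\Adm_2(D;r)$, $\varphi\mapsto\theta=(\chi,\{\beta_c\})$. So the sum over $\varphi$ becomes a sum over $\theta$, and it is enough to show that for corresponding $\varphi$ and $\theta$,
\[
\prod_{e\in\mathcal E(K(D))}E(\varphi(e))\prod_{F\in\mathcal F(K(D))}\Theta(\varphi|_{\partial F})\prod_{T\in\mathcal T(K(D))}\Tet(\varphi|_T)=w_{r,s}(\theta).
\]
The prefactors already agree: Proposition~\ref{prop:global-vertex-count} gives $|\mathcal V(K(D))|=N(D)$.

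Second, I split the global simplex classes into disjoint families.
\begin{itemize}
\item Edges: the $P$--$v$, $Q$--$v$ and $P$--$Q$ families, indexed by $(e,i)$, $(e,j)$ and $(R,i)$, and the interior edges of each $X_c$.
\item Triangles: the boundary triangle classes indexed by $(e,R_a(e),i)$, and the triangles interior to some $X_c$.
\item Tetrahedra: every tetrahedron lies in exactly one $X_c$.
\end{itemize}
The key point is that no global class is counted twice or lies in two families. Each boundary triangle is paired exactly once (Section~6.5), so each boundary class corresponds to one pair $(e,R_a(e),i)$. Interior edges and triangles of $X_c$ are never identified with anything by the pairings, which act only on boundary faces. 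Proposition~\ref{prop:global-factors} then says that the product over the global families is $W_D(\chi)$. Proposition~\ref{prop:crossing-local-factors} says that the product over the simplices interior to the $X_c$ (interior edges, interior triangles and all tetrahedra) is $\prod_c w_c(\chi,\beta_c)$. Multiplying the two gives $w_{r,s}(\theta)$ as defined in \eqref{eq:admissible-diagram-colouring-weight}.

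Third, I sum over $\theta$ and apply Proposition~\ref{prop:complete-colouring-expansion}, which yields $Z^{R_0}_{r,s}(D)$.

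The main obstacle is the disjointness bookkeeping in the second step. One must check that an interior edge of $X_c$ is never globally identified with a boundary edge or with an edge of another model. Such an identification would merge $E$-factors and break the product decomposition. I would argue this from the construction: the face pairings identify only boundary faces, so their boundary edges are identified only with boundary edges. For edges not lying on the boundary, this needs the implicit assumption that the tables list the interior edges of $X_c$ as distinct classes within $X_c$. That is exactly what the count $m_t$ in Appendix~A records, so I would take it as given.
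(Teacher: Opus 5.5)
Your proposal is correct and follows the paper's proof. You reindex via Theorem~\ref{thm:colouring-correspondence}, identify each summand with $\eta_{r,s}^{N(D)}W_D(\chi)\prod_{c}w_c(\chi,\beta_c)$ using Propositions~\ref{prop:crossing-local-factors} and~\ref{prop:global-factors} (with the vertex count of Proposition~\ref{prop:global-vertex-count}), and then sum using Proposition~\ref{prop:complete-colouring-expansion}. Your disjointness bookkeeping only spells out what the paper builds into the phrases ``belonging to the local triangulations $X_c$'' and ``not included in the crossing-local factors'' in those two propositions.
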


\begin{proof}
Theorem~\ref{thm:colouring-correspondence} matches the colourings, and
Propositions~\ref{prop:crossing-local-factors} and
\ref{prop:global-factors} identify the term for
$\theta=(\chi,\{\beta_c\})$ with
\[
  \eta_{r,s}^{N(D)}W_D(\chi)
  \prod_{c\in\Cross(D)}w_c(\chi,\beta_c).
\]
Summing and applying
Proposition~\ref{prop:complete-colouring-expansion} gives the result.
\end{proof}

\begin{proof}[Proof of Theorem~\ref{thm:reconstruction-connected}]
For any base region $R_0$, use the associated $K(D)$.
Proposition~\ref{prop:equality-fixed-base-region},
Theorem~\ref{thm:global-complex-identification}, and invariance of the
Turaev--Viro state sum give
\[
  Z_{r,s}^{R_0}(D)=\TV_{r,s}(K(D))=\TV_{r,s}(M(D)).\qedhere
\]
\end{proof}

Since Theorem~\ref{thm:reconstruction-connected} holds for every fixed base
region, two applications give the following independence.

\begin{corollary}[Independence of the base region]
\label{cor:base-region-independence}
For any two base regions $R_0,R'_0\in\mathcal R(D)$,
\[
  Z_{r,s}^{R_0}(D)=Z_{r,s}^{R'_0}(D).
\]
Thus the common value may be denoted by $Z_{r,s}(D)$.
\end{corollary}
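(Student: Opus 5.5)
The corollary follows by applying Theorem~\ref{thm:reconstruction-connected} twice, once for each choice of base region. The theorem is stated for an arbitrary fixed $R_0\in\mathcal R(D)$. The manifold $M(D)$ depends only on the labelled diagram $D$, not on the auxiliary base region used to index the second sheet labels and region transports. Hence
\[
  Z_{r,s}^{R_0}(D)=\TV_{r,s}(M(D))=Z_{r,s}^{R_0'}(D).
\]
The key point to check is that the right-hand side really is the same number in both applications. This holds because $\TV_{r,s}(M(D))$ is the Turaev--Viro invariant of the fixed PL manifold $M(D)$. The complexes $K(D)$ built from $R_0$ and from $R_0'$ might a priori differ, but each is PL homeomorphic to $M(D)$ by Theorem~\ref{thm:global-complex-identification}. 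Invariance of the Turaev--Viro state sum under change of generalized triangulation then gives equal values.

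As a sanity check independent of the topology, I would also sketch a direct combinatorial argument. Corollary~\ref{cor:base-region} gives a bijection $\Adm_1^{R_0}(D;r)\to\Adm_1^{R_0'}(D;r)$. It reindexes $q_j(e)$ by $\kappa=\mu_{R_0'}$ and preserves every first-stage triple $\bigl(p_i(e),q_{\mu_R(i)}(e),b_i(R)\bigr)$. This bijection preserves each factor of $W_D(\chi)$ term by term, since the $E$-factors over $j$ are merely permuted.

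For the crossing weights, the table substitution of Section~3.2 reads each $q$-symbol as $q_{\mu_R(\ell)}$. The primed versions give $q'_{\mu'_R(\ell)}=q_{\mu_R(\ell)}$, so the substituted tables coincide. Hence $\Col_c(\chi)$ and $w_c$ agree, and so does $W_c(\chi)$. Since $N(D)$ does not depend on $R_0$, the sums agree termwise. This is consistent with the remark after Definition~\ref{def:face-pairing-complex} that changing the base region yields a combinatorially isomorphic $K(D)$.

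The only potential obstacle is this last substitution step, specifically whether the $q$-index rule at a crossing uses $\mu_R$ for the sector $R_{12}$ in a way compatible with the left composition $\mu'_R=\kappa^{-1}\circ\mu_R$. Since the relabelling acts only on first sheet labels and $\kappa$ acts on second sheet labels, the two commute, and the identity $q'_{\kappa^{-1}\mu_R(\ell)}=q_{\mu_R(\ell)}$ settles it. In any case the first argument, via the main theorem, already suffices.
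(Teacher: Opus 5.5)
Your main argument is the paper's own proof: Theorem~\ref{thm:reconstruction-connected} holds for every choice of base region, and its right-hand side $\TV_{r,s}(M(D))$ does not involve $R_0$, so the two state sums coincide. Your supplementary check via Corollary~\ref{cor:base-region} is also correct. The identity $q'_{\mu'_R(\ell)}=q_{\mu_R(\ell)}$ makes $W_D(\chi)$, each $\Col_c(\chi)$ and each $w_c$ agree termwise, and this route avoids Turaev--Viro invariance, but it is not needed.
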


\begin{proof}
By Theorem~\ref{thm:reconstruction-connected}, both state sums equal
$\TV_{r,s}(M(D))$.
\end{proof}

\section{Consequences and an explicit calculation}\label{sec:consequences}

This section records consequences of the reconstruction theorem and gives an
explicit calculation for the lens space $L(2,1)$.

\subsection{Independence of the presentation}\label{subsec:presentation-independence}

We apply Theorem~\ref{thm:reconstruction-connected} to compare two four-fold
simple branched covering presentations of the same oriented closed
$3$-manifold.

\begin{corollary}\label{cor:presentation-independence}
Let $D$ and $D'$ be four-fold simple branched covering presentations with
connected projection graphs.  If they represent the same oriented closed
$3$-manifold, then
\[
  Z_{r,s}(D)=Z_{r,s}(D').
\]
\end{corollary}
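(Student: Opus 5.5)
This corollary follows directly from Theorem~\ref{thm:reconstruction-connected}, applied once to each presentation. Because $D$ and $D'$ are four-fold simple branched covering presentations, their monodromy actions are transitive; this is the standing hypothesis of Section~2.1 and is equivalent to connectedness of the covering manifolds. Their projection graphs are connected by assumption. Fix base regions $R_0\in\mathcal R(D)$ and $R_0'\in\mathcal R(D')$. For every admissible parameter pair $(r,s)$, Theorem~\ref{thm:reconstruction-connected} then gives
\[
  Z_{r,s}^{R_0}(D)=\TV_{r,s}(M(D)),
  \qquad
  Z_{r,s}^{R_0'}(D')=\TV_{r,s}(M(D')).
\]
By Corollary~\ref{cor:base-region-independence}, neither side depends on the chosen base region, so the notation $Z_{r,s}(D)$ and $Z_{r,s}(D')$ is justified.

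The remaining step is $\TV_{r,s}(M(D))=\TV_{r,s}(M(D'))$. By hypothesis there is an orientation-preserving homeomorphism $M(D)\cong M(D')$. In dimension three every closed topological manifold has a unique PL structure, so this homeomorphism may be taken to be PL; Theorem~\ref{thm:global-complex-identification} already expresses both manifolds as realizations of generalized triangulations $K(D)$ and $K(D')$. The Turaev--Viro state sum of Section~5.2 is invariant under change of generalized triangulation of the same closed $3$-manifold (via Pachner or $0$--$2$/$2$--$3$ moves together with vertex insertion, as in \cite{turaev-viro-1992,burton-maria-spreer-2018}). Therefore $\TV_{r,s}(K(D))=\TV_{r,s}(K(D'))$.

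The only point needing care is that the Turaev--Viro invariant with these conventions depends only on the unoriented PL homeomorphism type. Hence the orientation hypothesis is used only to place us in the setting of the covering moves; it does not enter the equality itself. Beyond citing invariance, there is no real obstacle. Chaining the equalities gives $Z_{r,s}(D)=Z_{r,s}(D')$.
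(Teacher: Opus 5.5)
Your proof is correct and follows the paper's own argument: apply Theorem~\ref{thm:reconstruction-connected} to each presentation and chain the two equalities through $\TV_{r,s}(M)$. The extra points you spell out are base-region independence, Moise's theorem, and invariance under change of generalized triangulation. The paper packages all of these into the Section~5.2 statement that $\TV_{r,s}$ depends only on the manifold, so they elaborate the same argument rather than give a different one.
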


\begin{proof}
Let $M$ be the represented manifold.  Theorem~\ref{thm:reconstruction-connected}
gives
\[
  Z_{r,s}(D)=\TV_{r,s}(M)=Z_{r,s}(D').
\]
\end{proof}

The covering moves MI and MII of Bobtcheva--Piergallini preserve the
represented oriented covering manifold, as do labelled Reidemeister moves
representing labelled isotopy~\cite[Section~1 and Figure~1]{bobtcheva-piergallini-2004}.
Hence, whenever the projection graphs before and after such a move are
connected, Corollary~\ref{cor:presentation-independence} shows that the
diagrammatic state sum is unchanged.
\subsection{Disconnected projections}\label{subsec:disconnected-projections}

We use labelled Reidemeister II moves to pass from a disconnected projection
graph to a connected one and then apply the preceding results.

Suppose that $\Gamma_D$ is disconnected.  Choose a complementary region whose
boundary meets two distinct connected components; such a region exists because
$S^2$ is connected.  Performing the labelled Reidemeister II move shown in
Figure~\ref{fig:connecting-reidemeister-two} inside this region joins those two
components and does not change the represented oriented covering manifold.
After finitely many repetitions, we obtain a diagram $D^\sharp$ with connected
projection graph and define
\[
  Z_{r,s}(D):=Z_{r,s}(D^\sharp).
\]
This value is independent of the connectedization.  Indeed, any two resulting
diagrams $D^\sharp$ and $D^{\sharp\prime}$ are connected presentations of the
same oriented closed $3$-manifold $M(D)$, so Corollary~\ref{cor:presentation-independence}
gives $Z_{r,s}(D^\sharp)=Z_{r,s}(D^{\sharp\prime})$.  By
Theorem~\ref{thm:reconstruction-connected}, their common value is
$\TV_{r,s}(M(D))$.

\begin{figure}[H]
  \centering
  \includegraphics[width=.75\textwidth]{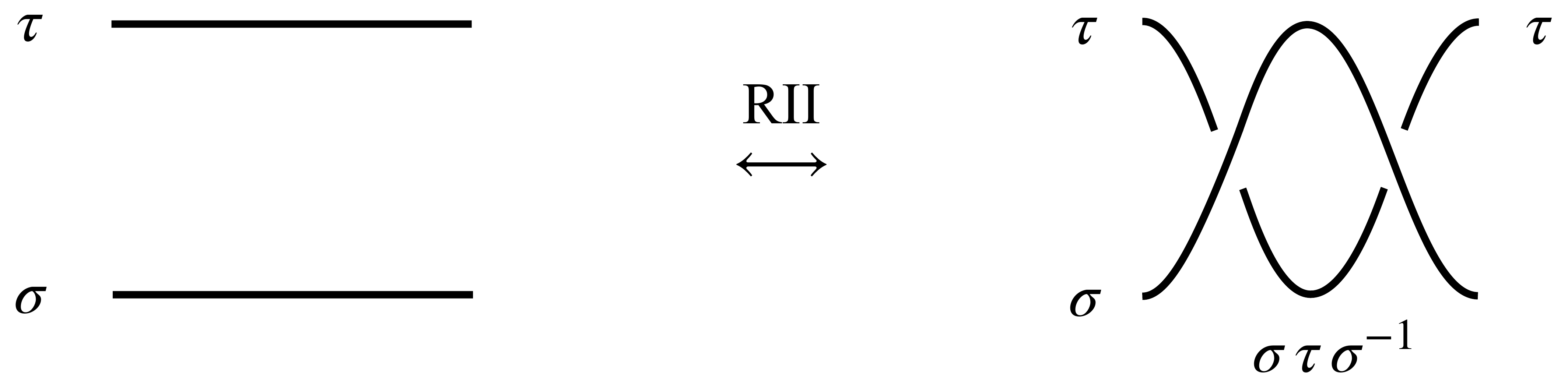}
  \caption{A labelled Reidemeister II move connecting two components of the
  projection graph; the middle under-strand is labelled
  $\sigma\tau\sigma^{-1}$.}
  \label{fig:connecting-reidemeister-two}
\end{figure}

\subsection{The lens space \texorpdfstring{$L(2,1)$}{L(2,1)}}
\label{subsec:lens-space-calculation}

We apply the diagrammatic state sum to a four-fold simple branched covering
presentation of $L(2,1)$ and evaluate it exactly for $r=3$.

For fixed $D$, $r$, and $s$, Definition~\ref{def:diagrammatic-state-sum} is a
finite sum.  Its weights are rational functions of $\zeta$, whose denominators
are nonzero under the parameter conditions fixed in Section~3.3, so the sum
can be evaluated exactly in $\mathbb Q(\zeta)$.  When $r=3$,
$C_3=\{0,1\}$ and the admissible triples are
\[
  (0,0,0),\qquad (0,1,1),\qquad (1,0,1),\qquad (1,1,0).
\]
Thus admissibility is the parity equation $a+b+c=0$ over $\mathbb F_2$,
and the compatible local colourings at a crossing are obtained from the
finite linear systems specified by Tables~A.2(a)--A.4(a).

Let $D_4$ be the presentation in Figure~\ref{fig:l21-presentation}.  It consists
of two Hopf-linked components labelled $(12)$ and two split stabilizing
unknots labelled $(23)$ and $(34)$.  Since its projection graph is
disconnected, we use the connectedization $D_4^\sharp$ from Section~8.2.

\begin{figure}[H]
  \centering
  \includegraphics[width=.95\textwidth]{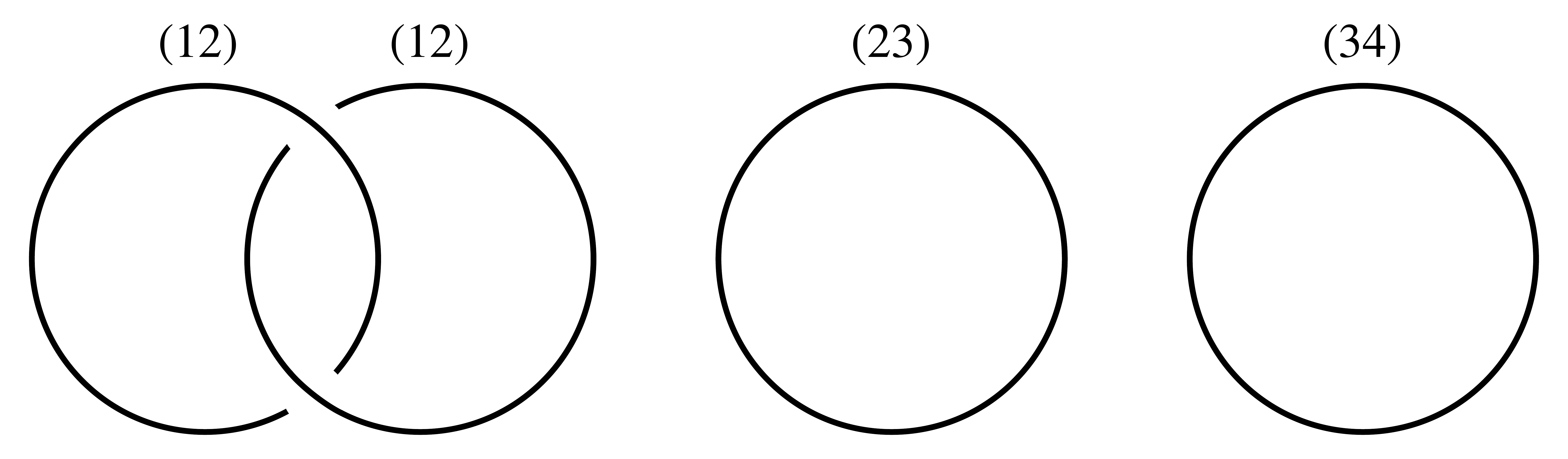}
  \caption{A four-fold simple branched covering presentation $D_4$ of
  $L(2,1)$.}
  \label{fig:l21-presentation}
\end{figure}

Write the two Hopf crossings as $c_0,c_1$.  Insert a labelled Reidemeister II
move between the $(12)$- and $(23)$-components, with crossings $c_2,c_3$ of
opposite signs, and a second such move between the $(23)$- and
$(34)$-components, with crossings $c_4,c_5$.  The two middle labels are
$(13)$ and $(24)$, respectively.  For this example, denote the twelve graph
edges of $D_4^\sharp$ by $e_1,\ldots,e_{12}$.  Table~\ref{tab:l21-graph-edges}
records their endpoints and labels; $c_j:h_i$ means that the endpoint is the
half-edge $h_i$ incident at $c_j$, with the cyclic order of Figures~6 and
10(a).

\begin{table}[H]
  \centering
  \small
  \caption{Graph-edge incidences and transposition labels in $D_4^\sharp$.}
  \label{tab:l21-graph-edges}
  \begin{minipage}[t]{.43\textwidth}
    \centering
    \begin{tabular}{@{}cccc@{}}
      \toprule
      edge & endpoint & endpoint & label \\
      \midrule
      $e_1$ & $c_0:h_2$ & $c_1:h_3$ & $(12)$ \\
      $e_2$ & $c_0:h_4$ & $c_1:h_1$ & $(12)$ \\
      $e_3$ & $c_0:h_3$ & $c_1:h_2$ & $(12)$ \\
      $e_4$ & $c_0:h_1$ & $c_2:h_1$ & $(12)$ \\
      $e_5$ & $c_3:h_3$ & $c_1:h_4$ & $(12)$ \\
      $e_6$ & $c_2:h_3$ & $c_3:h_1$ & $(13)$ \\
      \bottomrule
    \end{tabular}
  \end{minipage}\hspace{.04\textwidth}%
  \begin{minipage}[t]{.43\textwidth}
    \centering
    \begin{tabular}{@{}cccc@{}}
      \toprule
      edge & endpoint & endpoint & label \\
      \midrule
      $e_7$ & $c_2:h_4$ & $c_3:h_4$ & $(23)$ \\
      $e_8$ & $c_2:h_2$ & $c_4:h_1$ & $(23)$ \\
      $e_9$ & $c_3:h_2$ & $c_5:h_3$ & $(23)$ \\
      $e_{10}$ & $c_4:h_3$ & $c_5:h_1$ & $(24)$ \\
      $e_{11}$ & $c_4:h_4$ & $c_5:h_4$ & $(34)$ \\
      $e_{12}$ & $c_4:h_2$ & $c_5:h_2$ & $(34)$ \\
      \bottomrule
    \end{tabular}
  \end{minipage}
\end{table}

The crossings $c_0,c_1$ have type~1 and $c_2,\ldots,c_5$ have type~2.
Thus $D_4^\sharp$ has six crossings, twelve graph edges, and eight
complementary regions, so $N(D_4^\sharp)=56$.  If $f_k$ denotes the number
of $k$-simplices, the triangulation $K(D_4^\sharp)$ has $f$-vector
\[
  (f_0,f_1,f_2,f_3)=(56,252,392,196).
\]

The first-stage admissibility conditions give
\[
  |\Adm_1(D_4^\sharp;3)|=2^{45}.
\]
Precisely $2^{44}$ of these colourings satisfy
$\Col_c(\chi)\ne\varnothing$ at every crossing; the rest contribute zero
because Definition~\ref{def:crossing-weight} gives $W_c(\chi)=0$ when some
$\Col_c(\chi)$ is empty.  For each contributing $\chi$, all six sets
$\Col_c(\chi)$ have $2^2$ elements.  Definition~\ref{def:diagram-colouring}
therefore gives
\[
  2^{44}(2^2)^6=2^{56}
\]
contributing admissible diagram colourings.

For $s=1,5$, after the common $2^{12}$ local multiplicity is factored out,
the sign is a nonzero linear character on the $44$-dimensional contributing
subspace.  Each sign occurs $2^{43}$ times, so the sum is zero.  For $s=2,4$,
all factors apart from the normalization are $1$, and $2^{56}$ cancels the
vertex normalization $2^{-56}$.  Thus
\[
  Z_{3,s}(D_4)=Z_{3,s}(D_4^\sharp)=
  \begin{cases}
    0, & s=1,5,\\
    1, & s=2,4.
  \end{cases}
\]

This agrees with Turaev and Viro's formula for
$L(2,1)\cong\mathbb{R}P^3$, which is $(\zeta-1)(\zeta^{-1}-1)/r$ when
$(-\zeta)^r=-1$ and $0$ otherwise
\cite[Section~8.1.B, equation~(25)]{turaev-viro-1992}; at $r=3$ it gives the
four values above.

\section{Conclusion}\label{sec:conclusion}

The two-stage diagrammatic state sum constructed above agrees with the
Turaev--Viro invariant of the represented manifold, and its evaluation for
$L(2,1)$ at $r=3$ agrees with Turaev and Viro's formula for
$\mathbb{R}P^3$.  Further work includes analysing the computational complexity
of evaluating the diagrammatic state sum, investigating whether the
triangulations $K(D)$ can be simplified effectively by Pachner moves,
computing further examples, and extending the construction to other spherical
fusion categories.

\subsection*{Acknowledgements}

The author gratefully acknowledges financial support from the Women's Future
Development Organization, Tokyo University of Agriculture and Technology,
for the computing equipment used in this research.

\subsection*{Data availability}

The main theorem can be verified from the text and Appendices~A--C.  The
machine-readable package is available at
\url{https://doi.org/10.5281/zenodo.21884849} and contains the $16/4/16$
finite equivariance certificate, a reproducible correspondence check relating the finite data in
Appendices~A--C to Proposition~\ref{prop:crossing-local-factors}, and exact
data, scripts, and expected results for
Section~\ref{subsec:lens-space-calculation}.  SHA-256 checksums verify file
integrity.

\appendix
\section{Finite tables for the three crossing types}
\label{app:crossing-tables}

This appendix records the finite data used in Section~3 to compute the local
colourings and the crossing weights directly from the diagram.  Every entry
uses only the first-stage colours of Section~2 and the finitely many local
colours introduced at the crossing.  Their correspondence with the local
triangulations is proved in Section~7 and is not needed to execute the
diagrammatic calculation.

\setcounter{table}{0}
\renewcommand{\thetable}{\thesection.\arabic{table}}
\renewcommand{\theHtable}{\thesection.\arabic{table}}

\subsection{Reading conventions}

We use the position notation common to Figures~6 and~10.  The half-edges
meeting the crossing from the left, top, right, and bottom are
$h_1,h_2,h_3,h_4$, their graph edges are $e_1,e_2,e_3,e_4$, and the four
local sectors are denoted by $R_{12},R_{23},R_{34},R_{41}$.  If the same
graph edge or global complementary region occupies more than one of these
positions, its same first-stage colour is substituted at each such position.
The $q$-indices are transferred to an actual crossing by the rule in
Section~3.2.

For each type, part~(a) of the corresponding table lists all admissibility
triples.  A local assignment is retained precisely when every listed triple
is admissible.  Part~(b) lists the ordered six-colour tuples used once each
in the tetrahedron factors.  Their entries are already ordered as

\[
  01,\ 02,\ 03,\ 12,\ 13,\ 23,
\]

as in Sections~3.4 and~5.1.  Thus the two parts of each table completely
specify both the admissibility test and the product of tetrahedron factors.

\begin{table}[H]
  \centering
  \small
  \caption{Sizes of the three finite crossing tables.}
  \label{tab:appendix-a-sizes}
  \begin{tabular}{@{}c c c c@{}}
    \toprule
    Type & Local colours & Admissibility triples & Six-colour tuples \\
    \midrule
    1 & $x_1,\ldots,x_{22}$ & 52 & 34 \\
    2 & $x_1,\ldots,x_{20}$ & 48 & 32 \\
    3 & $x_1,\ldots,x_{20}$ & 48 & 32 \\
    \bottomrule
  \end{tabular}
\end{table}

\begin{table}[H]
  \centering
  \caption{Finite crossing table for type~1.}
  \label{tab:appendix-a-type1}
  \scriptsize
  \setlength{\tabcolsep}{2.5pt}
  \renewcommand{\arraystretch}{1.02}
  \textbf{(a) Admissibility triples}\par\smallskip
  \begin{tabular}{@{}c l@{}}
    \toprule
    No. & Triple \\ 
    \midrule
    A1 & $(p_{1}(e_{1}), p_{1}(e_{4}), x_{2})$ \\
    A2 & $(p_{1}(e_{1}), p_{1}(e_{2}), x_{1})$ \\
    A3 & $(p_{1}(e_{4}), p_{1}(e_{3}), x_{6})$ \\
    A4 & $(p_{1}(e_{4}), p_{1}(e_{2}), x_{5})$ \\
    A5 & $(p_{1}(e_{3}), p_{1}(e_{2}), x_{4})$ \\
    A6 & $(p_{2}(e_{1}), p_{2}(e_{4}), x_{10})$ \\
    A7 & $(p_{2}(e_{1}), p_{2}(e_{2}), x_{9})$ \\
    A8 & $(p_{2}(e_{4}), p_{2}(e_{3}), x_{3})$ \\
    A9 & $(p_{2}(e_{4}), p_{2}(e_{2}), x_{5})$ \\
    A10 & $(p_{2}(e_{3}), p_{2}(e_{2}), x_{8})$ \\
    A11 & $(q_{2}(e_{1}), q_{2}(e_{4}), x_{2})$ \\
    A12 & $(q_{2}(e_{1}), q_{2}(e_{3}), x_{7})$ \\
    A13 & $(q_{2}(e_{1}), q_{2}(e_{2}), x_{9})$ \\
    A14 & $(q_{2}(e_{4}), q_{2}(e_{3}), x_{3})$ \\
    A15 & $(q_{2}(e_{3}), q_{2}(e_{2}), x_{4})$ \\
    A16 & $(q_{1}(e_{1}), q_{1}(e_{4}), x_{10})$ \\
    A17 & $(q_{1}(e_{1}), q_{1}(e_{3}), x_{7})$ \\
    A18 & $(q_{1}(e_{1}), q_{1}(e_{2}), x_{1})$ \\
    A19 & $(q_{1}(e_{4}), q_{1}(e_{3}), x_{6})$ \\
    A20 & $(q_{1}(e_{3}), q_{1}(e_{2}), x_{8})$ \\
    A21 & $(x_{2}, x_{7}, x_{3})$ \\
    A22 & $(x_{2}, x_{1}, x_{5})$ \\
    A23 & $(x_{10}, x_{7}, x_{6})$ \\
    A24 & $(x_{10}, x_{9}, x_{5})$ \\
    A25 & $(x_{7}, x_{9}, x_{4})$ \\
    A26 & $(x_{7}, x_{1}, x_{8})$ \\
    \bottomrule
  \end{tabular}
  \hspace{14pt}%
  \begin{tabular}{@{}c l@{}}
    \toprule
    No. & Triple \\ 
    \midrule
    A27 & $(x_{3}, x_{5}, x_{8})$ \\
    A28 & $(x_{6}, x_{5}, x_{4})$ \\
    A29 & $(x_{11}, x_{12}, b_{3}(R_{41}))$ \\
    A30 & $(x_{11}, x_{12}, b_{3}(R_{34}))$ \\
    A31 & $(x_{11}, x_{12}, b_{3}(R_{23}))$ \\
    A32 & $(x_{11}, x_{12}, b_{3}(R_{12}))$ \\
    A33 & $(x_{11}, x_{13}, p_{3}(e_{1}))$ \\
    A34 & $(x_{11}, x_{14}, p_{3}(e_{4}))$ \\
    A35 & $(x_{11}, x_{15}, p_{3}(e_{3}))$ \\
    A36 & $(x_{11}, x_{16}, p_{3}(e_{2}))$ \\
    A37 & $(x_{12}, x_{13}, q_{3}(e_{1}))$ \\
    A38 & $(x_{12}, x_{14}, q_{3}(e_{4}))$ \\
    A39 & $(x_{12}, x_{15}, q_{3}(e_{3}))$ \\
    A40 & $(x_{12}, x_{16}, q_{3}(e_{2}))$ \\
    A41 & $(x_{17}, x_{18}, b_{4}(R_{41}))$ \\
    A42 & $(x_{17}, x_{18}, b_{4}(R_{34}))$ \\
    A43 & $(x_{17}, x_{18}, b_{4}(R_{23}))$ \\
    A44 & $(x_{17}, x_{18}, b_{4}(R_{12}))$ \\
    A45 & $(x_{17}, x_{19}, p_{4}(e_{1}))$ \\
    A46 & $(x_{17}, x_{20}, p_{4}(e_{4}))$ \\
    A47 & $(x_{17}, x_{21}, p_{4}(e_{3}))$ \\
    A48 & $(x_{17}, x_{22}, p_{4}(e_{2}))$ \\
    A49 & $(x_{18}, x_{19}, q_{4}(e_{1}))$ \\
    A50 & $(x_{18}, x_{20}, q_{4}(e_{4}))$ \\
    A51 & $(x_{18}, x_{21}, q_{4}(e_{3}))$ \\
    A52 & $(x_{18}, x_{22}, q_{4}(e_{2}))$ \\
    \bottomrule
  \end{tabular}
  \par\medskip
  \textbf{(b) Ordered six-colour tuples $(01,02,03,12,13,23)$}\par\smallskip
  \begin{tabular}{@{}c l@{}}
    \toprule
    No. & Tuple \\ 
    \midrule
    S1 & $(b_{1}(R_{12}), p_{1}(e_{1}), p_{1}(e_{2}), q_{1}(e_{1}), q_{1}(e_{2}), x_{1})$ \\
    S2 & $(b_{1}(R_{41}), p_{1}(e_{1}), p_{1}(e_{4}), q_{2}(e_{1}), q_{2}(e_{4}), x_{2})$ \\
    S3 & $(b_{2}(R_{34}), p_{2}(e_{3}), p_{2}(e_{4}), q_{2}(e_{3}), q_{2}(e_{4}), x_{3})$ \\
    S4 & $(b_{1}(R_{23}), p_{1}(e_{2}), p_{1}(e_{3}), q_{2}(e_{2}), q_{2}(e_{3}), x_{4})$ \\
    S5 & $(p_{1}(e_{1}), p_{1}(e_{2}), p_{1}(e_{4}), x_{1}, x_{2}, x_{5})$ \\
    S6 & $(p_{1}(e_{2}), p_{1}(e_{3}), p_{1}(e_{4}), x_{4}, x_{5}, x_{6})$ \\
    S7 & $(x_{1}, x_{7}, x_{2}, x_{8}, x_{5}, x_{3})$ \\
    S8 & $(q_{2}(e_{1}), q_{2}(e_{2}), q_{2}(e_{3}), x_{9}, x_{7}, x_{4})$ \\
    S9 & $(q_{2}(e_{1}), q_{2}(e_{3}), q_{2}(e_{4}), x_{7}, x_{2}, x_{3})$ \\
    S10 & $(b_{2}(R_{12}), p_{2}(e_{1}), p_{2}(e_{2}), q_{2}(e_{1}), q_{2}(e_{2}), x_{9})$ \\
    S11 & $(b_{2}(R_{41}), p_{2}(e_{1}), p_{2}(e_{4}), q_{1}(e_{1}), q_{1}(e_{4}), x_{10})$ \\
    S12 & $(b_{1}(R_{34}), p_{1}(e_{3}), p_{1}(e_{4}), q_{1}(e_{3}), q_{1}(e_{4}), x_{6})$ \\
    S13 & $(b_{2}(R_{23}), p_{2}(e_{2}), p_{2}(e_{3}), q_{1}(e_{2}), q_{1}(e_{3}), x_{8})$ \\
    S14 & $(p_{2}(e_{1}), p_{2}(e_{2}), p_{2}(e_{4}), x_{9}, x_{10}, x_{5})$ \\
    S15 & $(p_{2}(e_{2}), p_{2}(e_{3}), p_{2}(e_{4}), x_{8}, x_{5}, x_{3})$ \\
    S16 & $(x_{9}, x_{7}, x_{10}, x_{4}, x_{5}, x_{6})$ \\
    S17 & $(q_{1}(e_{1}), q_{1}(e_{2}), q_{1}(e_{3}), x_{1}, x_{7}, x_{8})$ \\
    \bottomrule
  \end{tabular}
  \hspace{14pt}%
  \begin{tabular}{@{}c l@{}}
    \toprule
    No. & Tuple \\ 
    \midrule
    S18 & $(q_{1}(e_{1}), q_{1}(e_{3}), q_{1}(e_{4}), x_{7}, x_{10}, x_{6})$ \\
    S19 & $(x_{11}, x_{12}, x_{13}, b_{3}(R_{12}), p_{3}(e_{1}), q_{3}(e_{1}))$ \\
    S20 & $(x_{11}, x_{12}, x_{13}, b_{3}(R_{41}), p_{3}(e_{1}), q_{3}(e_{1}))$ \\
    S21 & $(x_{11}, x_{12}, x_{14}, b_{3}(R_{41}), p_{3}(e_{4}), q_{3}(e_{4}))$ \\
    S22 & $(x_{11}, x_{12}, x_{14}, b_{3}(R_{34}), p_{3}(e_{4}), q_{3}(e_{4}))$ \\
    S23 & $(x_{11}, x_{12}, x_{15}, b_{3}(R_{34}), p_{3}(e_{3}), q_{3}(e_{3}))$ \\
    S24 & $(x_{11}, x_{12}, x_{15}, b_{3}(R_{23}), p_{3}(e_{3}), q_{3}(e_{3}))$ \\
    S25 & $(x_{11}, x_{12}, x_{16}, b_{3}(R_{23}), p_{3}(e_{2}), q_{3}(e_{2}))$ \\
    S26 & $(x_{11}, x_{12}, x_{16}, b_{3}(R_{12}), p_{3}(e_{2}), q_{3}(e_{2}))$ \\
    S27 & $(x_{17}, x_{18}, x_{19}, b_{4}(R_{12}), p_{4}(e_{1}), q_{4}(e_{1}))$ \\
    S28 & $(x_{17}, x_{18}, x_{19}, b_{4}(R_{41}), p_{4}(e_{1}), q_{4}(e_{1}))$ \\
    S29 & $(x_{17}, x_{18}, x_{20}, b_{4}(R_{41}), p_{4}(e_{4}), q_{4}(e_{4}))$ \\
    S30 & $(x_{17}, x_{18}, x_{20}, b_{4}(R_{34}), p_{4}(e_{4}), q_{4}(e_{4}))$ \\
    S31 & $(x_{17}, x_{18}, x_{21}, b_{4}(R_{34}), p_{4}(e_{3}), q_{4}(e_{3}))$ \\
    S32 & $(x_{17}, x_{18}, x_{21}, b_{4}(R_{23}), p_{4}(e_{3}), q_{4}(e_{3}))$ \\
    S33 & $(x_{17}, x_{18}, x_{22}, b_{4}(R_{23}), p_{4}(e_{2}), q_{4}(e_{2}))$ \\
    S34 & $(x_{17}, x_{18}, x_{22}, b_{4}(R_{12}), p_{4}(e_{2}), q_{4}(e_{2}))$ \\
    \bottomrule
  \end{tabular}
\end{table}

\begin{table}[H]
  \centering
  \caption{Finite crossing table for type~2.}
  \label{tab:appendix-a-type2}
  \scriptsize
  \setlength{\tabcolsep}{2.5pt}
  \renewcommand{\arraystretch}{1.02}
  \textbf{(a) Admissibility triples}\par\smallskip
  \begin{tabular}{@{}c l@{}}
    \toprule
    No. & Triple \\ 
    \midrule
    A1 & $(x_{1}, x_{4}, b_{1}(R_{41}))$ \\
    A2 & $(x_{2}, x_{5}, b_{2}(R_{41}))$ \\
    A3 & $(x_{3}, x_{6}, b_{3}(R_{41}))$ \\
    A4 & $(x_{1}, x_{4}, b_{1}(R_{34}))$ \\
    A5 & $(x_{2}, x_{6}, b_{2}(R_{34}))$ \\
    A6 & $(x_{3}, x_{5}, b_{3}(R_{34}))$ \\
    A7 & $(x_{1}, x_{5}, b_{1}(R_{23}))$ \\
    A8 & $(x_{2}, x_{6}, b_{2}(R_{23}))$ \\
    A9 & $(x_{3}, x_{4}, b_{3}(R_{23}))$ \\
    A10 & $(x_{1}, x_{5}, b_{1}(R_{12}))$ \\
    A11 & $(x_{2}, x_{4}, b_{2}(R_{12}))$ \\
    A12 & $(x_{3}, x_{6}, b_{3}(R_{12}))$ \\
    A13 & $(x_{1}, x_{7}, p_{1}(e_{1}))$ \\
    A14 & $(x_{2}, x_{7}, p_{2}(e_{1}))$ \\
    A15 & $(x_{3}, x_{8}, p_{3}(e_{1}))$ \\
    A16 & $(x_{1}, x_{9}, p_{1}(e_{4}))$ \\
    A17 & $(x_{2}, x_{10}, p_{2}(e_{4}))$ \\
    A18 & $(x_{3}, x_{10}, p_{3}(e_{4}))$ \\
    A19 & $(x_{1}, x_{11}, p_{1}(e_{3}))$ \\
    A20 & $(x_{2}, x_{12}, p_{2}(e_{3}))$ \\
    A21 & $(x_{3}, x_{11}, p_{3}(e_{3}))$ \\
    A22 & $(x_{1}, x_{13}, p_{1}(e_{2}))$ \\
    A23 & $(x_{2}, x_{14}, p_{2}(e_{2}))$ \\
    A24 & $(x_{3}, x_{14}, p_{3}(e_{2}))$ \\
    \bottomrule
  \end{tabular}
  \hspace{14pt}%
  \begin{tabular}{@{}c l@{}}
    \toprule
    No. & Triple \\ 
    \midrule
    A25 & $(x_{4}, x_{7}, q_{2}(e_{1}))$ \\
    A26 & $(x_{5}, x_{7}, q_{1}(e_{1}))$ \\
    A27 & $(x_{6}, x_{8}, q_{3}(e_{1}))$ \\
    A28 & $(x_{4}, x_{9}, q_{2}(e_{4}))$ \\
    A29 & $(x_{5}, x_{10}, q_{1}(e_{4}))$ \\
    A30 & $(x_{6}, x_{10}, q_{3}(e_{4}))$ \\
    A31 & $(x_{4}, x_{11}, q_{2}(e_{3}))$ \\
    A32 & $(x_{5}, x_{11}, q_{1}(e_{3}))$ \\
    A33 & $(x_{6}, x_{12}, q_{3}(e_{3}))$ \\
    A34 & $(x_{4}, x_{14}, q_{2}(e_{2}))$ \\
    A35 & $(x_{5}, x_{13}, q_{1}(e_{2}))$ \\
    A36 & $(x_{6}, x_{14}, q_{3}(e_{2}))$ \\
    A37 & $(x_{15}, x_{16}, b_{4}(R_{41}))$ \\
    A38 & $(x_{15}, x_{16}, b_{4}(R_{34}))$ \\
    A39 & $(x_{15}, x_{16}, b_{4}(R_{23}))$ \\
    A40 & $(x_{15}, x_{16}, b_{4}(R_{12}))$ \\
    A41 & $(x_{15}, x_{17}, p_{4}(e_{1}))$ \\
    A42 & $(x_{15}, x_{18}, p_{4}(e_{4}))$ \\
    A43 & $(x_{15}, x_{19}, p_{4}(e_{3}))$ \\
    A44 & $(x_{15}, x_{20}, p_{4}(e_{2}))$ \\
    A45 & $(x_{16}, x_{17}, q_{4}(e_{1}))$ \\
    A46 & $(x_{16}, x_{18}, q_{4}(e_{4}))$ \\
    A47 & $(x_{16}, x_{19}, q_{4}(e_{3}))$ \\
    A48 & $(x_{16}, x_{20}, q_{4}(e_{2}))$ \\
    \bottomrule
  \end{tabular}
  \par\medskip
  \textbf{(b) Ordered six-colour tuples $(01,02,03,12,13,23)$}\par\smallskip
  \begin{tabular}{@{}c l@{}}
    \toprule
    No. & Tuple \\ 
    \midrule
    S1 & $(x_{1}, x_{5}, x_{7}, b_{1}(R_{12}), p_{1}(e_{1}), q_{1}(e_{1}))$ \\
    S2 & $(x_{2}, x_{4}, x_{7}, b_{2}(R_{12}), p_{2}(e_{1}), q_{2}(e_{1}))$ \\
    S3 & $(x_{3}, x_{6}, x_{8}, b_{3}(R_{12}), p_{3}(e_{1}), q_{3}(e_{1}))$ \\
    S4 & $(x_{1}, x_{4}, x_{7}, b_{1}(R_{41}), p_{1}(e_{1}), q_{2}(e_{1}))$ \\
    S5 & $(x_{2}, x_{5}, x_{7}, b_{2}(R_{41}), p_{2}(e_{1}), q_{1}(e_{1}))$ \\
    S6 & $(x_{3}, x_{6}, x_{8}, b_{3}(R_{41}), p_{3}(e_{1}), q_{3}(e_{1}))$ \\
    S7 & $(x_{1}, x_{4}, x_{9}, b_{1}(R_{41}), p_{1}(e_{4}), q_{2}(e_{4}))$ \\
    S8 & $(x_{2}, x_{5}, x_{10}, b_{2}(R_{41}), p_{2}(e_{4}), q_{1}(e_{4}))$ \\
    S9 & $(x_{3}, x_{6}, x_{10}, b_{3}(R_{41}), p_{3}(e_{4}), q_{3}(e_{4}))$ \\
    S10 & $(x_{1}, x_{4}, x_{9}, b_{1}(R_{34}), p_{1}(e_{4}), q_{2}(e_{4}))$ \\
    S11 & $(x_{2}, x_{6}, x_{10}, b_{2}(R_{34}), p_{2}(e_{4}), q_{3}(e_{4}))$ \\
    S12 & $(x_{3}, x_{5}, x_{10}, b_{3}(R_{34}), p_{3}(e_{4}), q_{1}(e_{4}))$ \\
    S13 & $(x_{1}, x_{4}, x_{11}, b_{1}(R_{34}), p_{1}(e_{3}), q_{2}(e_{3}))$ \\
    S14 & $(x_{2}, x_{6}, x_{12}, b_{2}(R_{34}), p_{2}(e_{3}), q_{3}(e_{3}))$ \\
    S15 & $(x_{3}, x_{5}, x_{11}, b_{3}(R_{34}), p_{3}(e_{3}), q_{1}(e_{3}))$ \\
    S16 & $(x_{1}, x_{5}, x_{11}, b_{1}(R_{23}), p_{1}(e_{3}), q_{1}(e_{3}))$ \\
    \bottomrule
  \end{tabular}
  \hspace{14pt}%
  \begin{tabular}{@{}c l@{}}
    \toprule
    No. & Tuple \\ 
    \midrule
    S17 & $(x_{2}, x_{6}, x_{12}, b_{2}(R_{23}), p_{2}(e_{3}), q_{3}(e_{3}))$ \\
    S18 & $(x_{3}, x_{4}, x_{11}, b_{3}(R_{23}), p_{3}(e_{3}), q_{2}(e_{3}))$ \\
    S19 & $(x_{1}, x_{5}, x_{13}, b_{1}(R_{23}), p_{1}(e_{2}), q_{1}(e_{2}))$ \\
    S20 & $(x_{2}, x_{6}, x_{14}, b_{2}(R_{23}), p_{2}(e_{2}), q_{3}(e_{2}))$ \\
    S21 & $(x_{3}, x_{4}, x_{14}, b_{3}(R_{23}), p_{3}(e_{2}), q_{2}(e_{2}))$ \\
    S22 & $(x_{1}, x_{5}, x_{13}, b_{1}(R_{12}), p_{1}(e_{2}), q_{1}(e_{2}))$ \\
    S23 & $(x_{2}, x_{4}, x_{14}, b_{2}(R_{12}), p_{2}(e_{2}), q_{2}(e_{2}))$ \\
    S24 & $(x_{3}, x_{6}, x_{14}, b_{3}(R_{12}), p_{3}(e_{2}), q_{3}(e_{2}))$ \\
    S25 & $(x_{15}, x_{16}, x_{17}, b_{4}(R_{12}), p_{4}(e_{1}), q_{4}(e_{1}))$ \\
    S26 & $(x_{15}, x_{16}, x_{17}, b_{4}(R_{41}), p_{4}(e_{1}), q_{4}(e_{1}))$ \\
    S27 & $(x_{15}, x_{16}, x_{18}, b_{4}(R_{41}), p_{4}(e_{4}), q_{4}(e_{4}))$ \\
    S28 & $(x_{15}, x_{16}, x_{18}, b_{4}(R_{34}), p_{4}(e_{4}), q_{4}(e_{4}))$ \\
    S29 & $(x_{15}, x_{16}, x_{19}, b_{4}(R_{34}), p_{4}(e_{3}), q_{4}(e_{3}))$ \\
    S30 & $(x_{15}, x_{16}, x_{19}, b_{4}(R_{23}), p_{4}(e_{3}), q_{4}(e_{3}))$ \\
    S31 & $(x_{15}, x_{16}, x_{20}, b_{4}(R_{23}), p_{4}(e_{2}), q_{4}(e_{2}))$ \\
    S32 & $(x_{15}, x_{16}, x_{20}, b_{4}(R_{12}), p_{4}(e_{2}), q_{4}(e_{2}))$ \\
    \bottomrule
  \end{tabular}
\end{table}

\begin{table}[H]
  \centering
  \caption{Finite crossing table for type~3.}
  \label{tab:appendix-a-type3}
  \scriptsize
  \setlength{\tabcolsep}{2.5pt}
  \renewcommand{\arraystretch}{1.02}
  \textbf{(a) Admissibility triples}\par\smallskip
  \begin{tabular}{@{}c l@{}}
    \toprule
    No. & Triple \\ 
    \midrule
    A1 & $(x_{1}, x_{3}, b_{1}(R_{41}))$ \\
    A2 & $(x_{2}, x_{4}, b_{2}(R_{41}))$ \\
    A3 & $(x_{1}, x_{3}, b_{1}(R_{34}))$ \\
    A4 & $(x_{2}, x_{4}, b_{2}(R_{34}))$ \\
    A5 & $(x_{1}, x_{4}, b_{1}(R_{23}))$ \\
    A6 & $(x_{2}, x_{3}, b_{2}(R_{23}))$ \\
    A7 & $(x_{1}, x_{4}, b_{1}(R_{12}))$ \\
    A8 & $(x_{2}, x_{3}, b_{2}(R_{12}))$ \\
    A9 & $(x_{1}, x_{5}, p_{1}(e_{1}))$ \\
    A10 & $(x_{2}, x_{5}, p_{2}(e_{1}))$ \\
    A11 & $(x_{1}, x_{6}, p_{1}(e_{4}))$ \\
    A12 & $(x_{2}, x_{7}, p_{2}(e_{4}))$ \\
    A13 & $(x_{1}, x_{8}, p_{1}(e_{3}))$ \\
    A14 & $(x_{2}, x_{8}, p_{2}(e_{3}))$ \\
    A15 & $(x_{1}, x_{9}, p_{1}(e_{2}))$ \\
    A16 & $(x_{2}, x_{10}, p_{2}(e_{2}))$ \\
    A17 & $(x_{3}, x_{5}, q_{2}(e_{1}))$ \\
    A18 & $(x_{4}, x_{5}, q_{1}(e_{1}))$ \\
    A19 & $(x_{3}, x_{6}, q_{2}(e_{4}))$ \\
    A20 & $(x_{4}, x_{7}, q_{1}(e_{4}))$ \\
    A21 & $(x_{3}, x_{8}, q_{2}(e_{3}))$ \\
    A22 & $(x_{4}, x_{8}, q_{1}(e_{3}))$ \\
    A23 & $(x_{3}, x_{10}, q_{2}(e_{2}))$ \\
    A24 & $(x_{4}, x_{9}, q_{1}(e_{2}))$ \\
    \bottomrule
  \end{tabular}
  \hspace{14pt}%
  \begin{tabular}{@{}c l@{}}
    \toprule
    No. & Triple \\ 
    \midrule
    A25 & $(x_{11}, x_{13}, b_{3}(R_{41}))$ \\
    A26 & $(x_{12}, x_{14}, b_{4}(R_{41}))$ \\
    A27 & $(x_{11}, x_{14}, b_{3}(R_{34}))$ \\
    A28 & $(x_{12}, x_{13}, b_{4}(R_{34}))$ \\
    A29 & $(x_{11}, x_{14}, b_{3}(R_{23}))$ \\
    A30 & $(x_{12}, x_{13}, b_{4}(R_{23}))$ \\
    A31 & $(x_{11}, x_{13}, b_{3}(R_{12}))$ \\
    A32 & $(x_{12}, x_{14}, b_{4}(R_{12}))$ \\
    A33 & $(x_{11}, x_{15}, p_{3}(e_{1}))$ \\
    A34 & $(x_{12}, x_{16}, p_{4}(e_{1}))$ \\
    A35 & $(x_{11}, x_{17}, p_{3}(e_{4}))$ \\
    A36 & $(x_{12}, x_{17}, p_{4}(e_{4}))$ \\
    A37 & $(x_{11}, x_{18}, p_{3}(e_{3}))$ \\
    A38 & $(x_{12}, x_{19}, p_{4}(e_{3}))$ \\
    A39 & $(x_{11}, x_{20}, p_{3}(e_{2}))$ \\
    A40 & $(x_{12}, x_{20}, p_{4}(e_{2}))$ \\
    A41 & $(x_{13}, x_{15}, q_{3}(e_{1}))$ \\
    A42 & $(x_{14}, x_{16}, q_{4}(e_{1}))$ \\
    A43 & $(x_{13}, x_{17}, q_{3}(e_{4}))$ \\
    A44 & $(x_{14}, x_{17}, q_{4}(e_{4}))$ \\
    A45 & $(x_{13}, x_{19}, q_{3}(e_{3}))$ \\
    A46 & $(x_{14}, x_{18}, q_{4}(e_{3}))$ \\
    A47 & $(x_{13}, x_{20}, q_{3}(e_{2}))$ \\
    A48 & $(x_{14}, x_{20}, q_{4}(e_{2}))$ \\
    \bottomrule
  \end{tabular}
  \par\medskip
  \textbf{(b) Ordered six-colour tuples $(01,02,03,12,13,23)$}\par\smallskip
  \begin{tabular}{@{}c l@{}}
    \toprule
    No. & Tuple \\ 
    \midrule
    S1 & $(x_{1}, x_{4}, x_{5}, b_{1}(R_{12}), p_{1}(e_{1}), q_{1}(e_{1}))$ \\
    S2 & $(x_{2}, x_{3}, x_{5}, b_{2}(R_{12}), p_{2}(e_{1}), q_{2}(e_{1}))$ \\
    S3 & $(x_{1}, x_{3}, x_{5}, b_{1}(R_{41}), p_{1}(e_{1}), q_{2}(e_{1}))$ \\
    S4 & $(x_{2}, x_{4}, x_{5}, b_{2}(R_{41}), p_{2}(e_{1}), q_{1}(e_{1}))$ \\
    S5 & $(x_{1}, x_{3}, x_{6}, b_{1}(R_{41}), p_{1}(e_{4}), q_{2}(e_{4}))$ \\
    S6 & $(x_{2}, x_{4}, x_{7}, b_{2}(R_{41}), p_{2}(e_{4}), q_{1}(e_{4}))$ \\
    S7 & $(x_{1}, x_{3}, x_{6}, b_{1}(R_{34}), p_{1}(e_{4}), q_{2}(e_{4}))$ \\
    S8 & $(x_{2}, x_{4}, x_{7}, b_{2}(R_{34}), p_{2}(e_{4}), q_{1}(e_{4}))$ \\
    S9 & $(x_{1}, x_{3}, x_{8}, b_{1}(R_{34}), p_{1}(e_{3}), q_{2}(e_{3}))$ \\
    S10 & $(x_{2}, x_{4}, x_{8}, b_{2}(R_{34}), p_{2}(e_{3}), q_{1}(e_{3}))$ \\
    S11 & $(x_{1}, x_{4}, x_{8}, b_{1}(R_{23}), p_{1}(e_{3}), q_{1}(e_{3}))$ \\
    S12 & $(x_{2}, x_{3}, x_{8}, b_{2}(R_{23}), p_{2}(e_{3}), q_{2}(e_{3}))$ \\
    S13 & $(x_{1}, x_{4}, x_{9}, b_{1}(R_{23}), p_{1}(e_{2}), q_{1}(e_{2}))$ \\
    S14 & $(x_{2}, x_{3}, x_{10}, b_{2}(R_{23}), p_{2}(e_{2}), q_{2}(e_{2}))$ \\
    S15 & $(x_{1}, x_{4}, x_{9}, b_{1}(R_{12}), p_{1}(e_{2}), q_{1}(e_{2}))$ \\
    S16 & $(x_{2}, x_{3}, x_{10}, b_{2}(R_{12}), p_{2}(e_{2}), q_{2}(e_{2}))$ \\
    \bottomrule
  \end{tabular}
  \hspace{14pt}%
  \begin{tabular}{@{}c l@{}}
    \toprule
    No. & Tuple \\ 
    \midrule
    S17 & $(x_{11}, x_{13}, x_{15}, b_{3}(R_{12}), p_{3}(e_{1}), q_{3}(e_{1}))$ \\
    S18 & $(x_{12}, x_{14}, x_{16}, b_{4}(R_{12}), p_{4}(e_{1}), q_{4}(e_{1}))$ \\
    S19 & $(x_{11}, x_{13}, x_{15}, b_{3}(R_{41}), p_{3}(e_{1}), q_{3}(e_{1}))$ \\
    S20 & $(x_{12}, x_{14}, x_{16}, b_{4}(R_{41}), p_{4}(e_{1}), q_{4}(e_{1}))$ \\
    S21 & $(x_{11}, x_{13}, x_{17}, b_{3}(R_{41}), p_{3}(e_{4}), q_{3}(e_{4}))$ \\
    S22 & $(x_{12}, x_{14}, x_{17}, b_{4}(R_{41}), p_{4}(e_{4}), q_{4}(e_{4}))$ \\
    S23 & $(x_{11}, x_{14}, x_{17}, b_{3}(R_{34}), p_{3}(e_{4}), q_{4}(e_{4}))$ \\
    S24 & $(x_{12}, x_{13}, x_{17}, b_{4}(R_{34}), p_{4}(e_{4}), q_{3}(e_{4}))$ \\
    S25 & $(x_{11}, x_{14}, x_{18}, b_{3}(R_{34}), p_{3}(e_{3}), q_{4}(e_{3}))$ \\
    S26 & $(x_{12}, x_{13}, x_{19}, b_{4}(R_{34}), p_{4}(e_{3}), q_{3}(e_{3}))$ \\
    S27 & $(x_{11}, x_{14}, x_{18}, b_{3}(R_{23}), p_{3}(e_{3}), q_{4}(e_{3}))$ \\
    S28 & $(x_{12}, x_{13}, x_{19}, b_{4}(R_{23}), p_{4}(e_{3}), q_{3}(e_{3}))$ \\
    S29 & $(x_{11}, x_{14}, x_{20}, b_{3}(R_{23}), p_{3}(e_{2}), q_{4}(e_{2}))$ \\
    S30 & $(x_{12}, x_{13}, x_{20}, b_{4}(R_{23}), p_{4}(e_{2}), q_{3}(e_{2}))$ \\
    S31 & $(x_{11}, x_{13}, x_{20}, b_{3}(R_{12}), p_{3}(e_{2}), q_{3}(e_{2}))$ \\
    S32 & $(x_{12}, x_{14}, x_{20}, b_{4}(R_{12}), p_{4}(e_{2}), q_{4}(e_{2}))$ \\
    \bottomrule
  \end{tabular}
\end{table}

\section{Boundary triangles and local transport}
\label{app:boundary-transport}

This appendix records the eight triangles in the boundary of a base banana,
the local transport for each crossing type, and the resulting completeness
of the thirty-two lifted boundary triangles.  These data are used in the
global gluing of Section~6.

\setcounter{table}{0}
\renewcommand{\thetable}{\thesection.\arabic{table}}
\renewcommand{\theHtable}{\thesection.\arabic{table}}

\subsection{Base triangles and local transport}

Table~B.1 identifies the two local sectors adjacent to each half-edge and
hence the eight base triangles $\Delta_{h,R}$.  For each crossing type,
Table~B.2 gives the permutation that sends a local $P$-sheet index to the
local $Q$-sheet index at the other end of the lifted edge in each sector.

\begin{table}[H]
  \centering
  \small
  \setlength{\tabcolsep}{4pt}
  \renewcommand{\arraystretch}{1.08}
  \caption{Half-edges and adjacent sectors of the base banana.}
  \label{tab:appendix-b-boundary}
  \begin{tabular}{@{}c c c@{}}
    \toprule
    Half-edge & One adjacent sector & Other adjacent sector \\
    \midrule
    $h_{1}$ & $R_{41}$ & $R_{12}$ \\
    $h_{2}$ & $R_{12}$ & $R_{23}$ \\
    $h_{3}$ & $R_{23}$ & $R_{34}$ \\
    $h_{4}$ & $R_{34}$ & $R_{41}$ \\
    \bottomrule
  \end{tabular}
\end{table}

\begin{table}[H]
  \centering
  \small
  \setlength{\tabcolsep}{4pt}
  \renewcommand{\arraystretch}{1.08}
  \caption{Local $Q$-endpoint permutations in the three standard models.}
  \label{tab:appendix-b-transport}
  \begin{tabular}{@{}c c c c c@{}}
    \toprule
    Type & $R_{12}$ & $R_{23}$ & $R_{34}$ & $R_{41}$ \\
    \midrule
    1 & $\mathrm{id}$ & $(12)$ & $\mathrm{id}$ & $(12)$ \\
    2 & $\mathrm{id}$ & $(23)$ & $(123)$ & $(12)$ \\
    3 & $\mathrm{id}$ & $(34)$ & $(12)(34)$ & $(12)$ \\
    \bottomrule
  \end{tabular}
\end{table}

\subsection{Completeness of the lifted boundary}

We verify that the notation used in Section~6 accounts for the full boundary
without repetition.  This also gives a uniform description of the cone
components in the standard local triangulations.

\begin{lemma}\label{lem:appendix-b-completeness}
For every crossing $c$, the thirty-two triangles
$\Delta_{c;h,R}^{\,i}$, with $(h,R)$ ranging over Table~B.1 and
$i\in\{1,2,3,4\}$, are precisely the boundary triangles of the lifted local
model.
\end{lemma}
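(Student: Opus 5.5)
We show that the map $(h,R,i)\mapsto\Delta_{c;h,R}^{\,i}$ is a bijection onto the boundary triangles of $X_c$. The argument is a counting one: surjectivity and injectivity of this map onto the $32$ lifted boundary triangles. Since $X_c$ is a copy of $X^{(t)}$ with only the boundary labels changed (Section~6.4), it suffices to prove the statement for the standard models $X^{(t)}$. We then transport it through the matching by plane symmetry and sheet relabelling, which acts bijectively on half-edges, sectors and sheet labels.

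\textbf{Step 1 (surjectivity).} The boundary of the base banana $B^{(t)}$ consists of exactly the eight triangles $\Delta_{h,R}$ listed in Table~B.1. Each half-edge meets exactly its two adjacent sectors, so the $(h,R)$ pairs give $4\cdot2=8$ triangles, matching the eight-triangle subdivision of Section~6.1. The interior of each $\Delta_{h,R}$ is disjoint from the branch set, so its preimage in $\widetilde B^{(t)}$ is four disjoint lifted triangles, one through each $P_i$. This is because the covering restricted to the open triangle is trivial and $P$ has four distinct lifts. Hence every boundary triangle of $\widetilde B^{(t)}$ is some $\Delta_{h,R}^{\,i}$. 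Proposition~\ref{prop:standard-local-triangulations} supplies the boundary-marked PL homeomorphism $\phi^{(t)}$, which carries $\partial|X^{(t)}|$ onto $\partial\widetilde B^{(t)}$ preserving the boundary marking. So the boundary triangles of $X^{(t)}$ are exactly these lifts.

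\textbf{Step 2 (injectivity).} Distinct $(h,R)$ give lifts of distinct base triangles, and for fixed $(h,R)$ distinct $i$ give lifts containing distinct vertices $P_i$. The ordered vertex triple $(P_i,Q_j,v(h)^I)$ is then well defined and determines $i$. Here $j$ is read from Table~B.2 and $I=[i]_{\tau_e}$. It also determines $(h,R)$ through $v(h)$ and the sector: for fixed $h$, the two sectors give $Q$-indices differing by the transport of Table~B.2, or else they are distinct triangles anyway. This gives $32$ distinct triangles.

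\textbf{Step 3 (matching the models).} We check the counts directly in the models. For the cone components, the number of boundary faces equals the number of tetrahedra: $8$ per singleton orbit, $24$ for the orbit $\{1,2,3\}$, and $16$ for each two-element orbit. The totals per type are $8+8=16$ plus the $16$ boundary faces of the type~1 solid torus (Appendix~C.3), then $24+8=32$ for type~2 and $16+16=32$ for type~3. Each total is $32$, so there are no extra boundary faces.

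The main obstacle is confirming that the labelled faces in Figures~\ref{fig:type1-boundary-torus} and~\ref{fig:cone-boundary-spheres} carry exactly the vertex triples predicted by Table~B.2, so that the counting bijection is compatible with the markings. I would handle this by reading each sector's permutation from Table~B.2 and comparing it against the drawn faces component by component.
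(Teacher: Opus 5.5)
Your Step~1 is the paper's proof: the boundary of the base banana is the union of the eight triangles of Table~B.1, the interior of each is disjoint from the branch set, so each has four lifts distinguished by $P_i$, giving $8\cdot4=32$. The rest is not needed. This includes the transfer to $X^{(t)}$ through Proposition~\ref{prop:standard-local-triangulations}, which is best avoided here because that proposition's own proof relies on Appendix~B, and the face count of Step~3. The vertex-triple remark in Step~2 is also unnecessary and inaccurate in general: in type~1, $\Delta_{h_1,R_{12}}^{3}$ and $\Delta_{h_1,R_{41}}^{3}$ both have vertices $(P_3,Q_3,v(h_1)^{3})$, so base on injectivity only on the distinctness of the base triangles and of the $P_i$, as in your first sentence of Step~2.
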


\begin{proof}
The boundary of the base banana is the union of the eight triangles in
Table~B.1.  The interior of each is disjoint from the branch set, so it has
four distinct lifts, uniquely distinguished by the $P$-sheet $i$.  Hence the
eight base triangles give exactly $8\cdot4=32$ lifted triangles.
\end{proof}

Except for the type~1 solid-torus component, each component of a standard
local triangulation is the cone from one interior vertex $O_{\Omega}$ over
the boundary triangles whose sheet labels lie in the component orbit
$\Omega$.  If the entry in Table~B.2 sends $i$ to $j$, the tetrahedron over
$\Delta_{h,R}^{\,i}$ has ordered vertices
\[
  \bigl(O_{\Omega},P_i,Q_j,v(h)^{[i]_{\tau_e}}\bigr).
\]
This rule determines all tetrahedra and their boundary faces in the cone
components.

\section{Finite PL identification of the type~1 local model}
\label{app:type1-pl-model}

This appendix specifies the eighteen-tetrahedron complex $X$ over the
component orbit $\{1,2\}$ of a type~1 crossing and identifies it, with its
boundary marking, with the corresponding component of the lifted banana.
The topology of that component was already established in Lemma~6.2; here we
record only the finite data needed for the PL identification.

\setcounter{table}{0}
\renewcommand{\thetable}{\thesection.\arabic{table}}
\renewcommand{\theHtable}{\thesection.\arabic{table}}

Throughout this appendix, the vertex $v_a^{12}$ is abbreviated to $v_a$.
All relevant edge labels are $(12)$, so the local edge-label orbit $\{1,2\}$
coincides with the component orbit.

\subsection{The eighteen ordered tetrahedra}

Table~C.1 fixes the finite complex before face identifications.  The listed
order is the vertex order used for all face maps and tetrahedron weights.

\begin{table}[H]
  \centering
  \scriptsize
  \setlength{\tabcolsep}{4pt}
  \renewcommand{\arraystretch}{1.08}
  \caption{The eighteen tetrahedra in the type~1 solid-torus component.}
  \label{tab:appendix-c-tetrahedra}
  \begin{tabular}{@{}c l@{}}
    \toprule
    Tet. & Ordered vertices \\
    \midrule
    $\mathrm{T}01$ & $P_{1},Q_{2},v_{1}^{12},v_{4}^{12}$ \\
    $\mathrm{T}02$ & $P_{1},Q_{1},v_{1}^{12},v_{2}^{12}$ \\
    $\mathrm{T}03$ & $P_{2},Q_{1},v_{2}^{12},v_{3}^{12}$ \\
    $\mathrm{T}04$ & $P_{1},Q_{1},v_{3}^{12},v_{4}^{12}$ \\
    $\mathrm{T}05$ & $P_{1},v_{1}^{12},v_{2}^{12},v_{4}^{12}$ \\
    $\mathrm{T}06$ & $P_{1},v_{2}^{12},v_{3}^{12},v_{4}^{12}$ \\
    \bottomrule
  \end{tabular}
  \hspace{14pt}%
  \begin{tabular}{@{}c l@{}}
    \toprule
    Tet. & Ordered vertices \\
    \midrule
    $\mathrm{T}07$ & $v_{1}^{12},v_{2}^{12},v_{3}^{12},v_{4}^{12}$ \\
    $\mathrm{T}08$ & $Q_{1},v_{1}^{12},v_{3}^{12},v_{4}^{12}$ \\
    $\mathrm{T}09$ & $Q_{1},v_{1}^{12},v_{2}^{12},v_{3}^{12}$ \\
    $\mathrm{T}10$ & $P_{2},Q_{1},v_{1}^{12},v_{4}^{12}$ \\
    $\mathrm{T}11$ & $P_{2},Q_{2},v_{1}^{12},v_{2}^{12}$ \\
    $\mathrm{T}12$ & $P_{1},Q_{2},v_{2}^{12},v_{3}^{12}$ \\
    \bottomrule
  \end{tabular}
  \hspace{14pt}%
  \begin{tabular}{@{}c l@{}}
    \toprule
    Tet. & Ordered vertices \\
    \midrule
    $\mathrm{T}13$ & $P_{2},Q_{2},v_{3}^{12},v_{4}^{12}$ \\
    $\mathrm{T}14$ & $P_{2},v_{1}^{12},v_{2}^{12},v_{4}^{12}$ \\
    $\mathrm{T}15$ & $P_{2},v_{2}^{12},v_{3}^{12},v_{4}^{12}$ \\
    $\mathrm{T}16$ & $v_{1}^{12},v_{2}^{12},v_{3}^{12},v_{4}^{12}$ \\
    $\mathrm{T}17$ & $Q_{2},v_{1}^{12},v_{3}^{12},v_{4}^{12}$ \\
    $\mathrm{T}18$ & $Q_{2},v_{1}^{12},v_{2}^{12},v_{3}^{12}$ \\
    \bottomrule
  \end{tabular}
\end{table}

\subsection{The twenty-eight internal face pairings}

Table~C.2 completes the definition of $X$.  Here $F_k$ denotes the face
opposite vertex position $k$.  A permutation $(p_0,p_1,p_2,p_3)$ sends the
vertex in position $k$ of the first tetrahedron to position $p_k$ of the
second; the entry corresponding to the omitted vertex records the two
opposite faces.

\begin{table}[H]
  \centering
  \scriptsize
  \setlength{\tabcolsep}{4pt}
  \renewcommand{\arraystretch}{1.08}
  \caption{The twenty-eight internal face pairings.}
  \label{tab:appendix-c-pairings}
  \begin{tabular}{@{}c c c c c@{}}
    \toprule
    Tet. & Face & Tet. & Face & Permutation \\
    \midrule
    $\mathrm{T}02$ & $F_{1}$ & $\mathrm{T}05$ & $F_{3}$ & $(0,3,1,2)$ \\
    $\mathrm{T}01$ & $F_{1}$ & $\mathrm{T}05$ & $F_{2}$ & $(0,2,1,3)$ \\
    $\mathrm{T}06$ & $F_{3}$ & $\mathrm{T}12$ & $F_{1}$ & $(0,2,3,1)$ \\
    $\mathrm{T}05$ & $F_{1}$ & $\mathrm{T}06$ & $F_{2}$ & $(0,2,1,3)$ \\
    $\mathrm{T}04$ & $F_{1}$ & $\mathrm{T}06$ & $F_{1}$ & $(0,1,2,3)$ \\
    $\mathrm{T}11$ & $F_{1}$ & $\mathrm{T}14$ & $F_{3}$ & $(0,3,1,2)$ \\
    $\mathrm{T}10$ & $F_{1}$ & $\mathrm{T}14$ & $F_{2}$ & $(0,2,1,3)$ \\
    $\mathrm{T}03$ & $F_{1}$ & $\mathrm{T}15$ & $F_{3}$ & $(0,3,1,2)$ \\
    $\mathrm{T}14$ & $F_{1}$ & $\mathrm{T}15$ & $F_{2}$ & $(0,2,1,3)$ \\
    $\mathrm{T}13$ & $F_{1}$ & $\mathrm{T}15$ & $F_{1}$ & $(0,1,2,3)$ \\
    $\mathrm{T}02$ & $F_{0}$ & $\mathrm{T}09$ & $F_{3}$ & $(3,0,1,2)$ \\
    $\mathrm{T}08$ & $F_{3}$ & $\mathrm{T}09$ & $F_{2}$ & $(0,1,3,2)$ \\
    $\mathrm{T}08$ & $F_{2}$ & $\mathrm{T}10$ & $F_{0}$ & $(1,2,0,3)$ \\
    $\mathrm{T}03$ & $F_{0}$ & $\mathrm{T}09$ & $F_{1}$ & $(1,0,2,3)$ \\
    \bottomrule
  \end{tabular}
  \hspace{14pt}%
  \begin{tabular}{@{}c c c c c@{}}
    \toprule
    Tet. & Face & Tet. & Face & Permutation \\
    \midrule
    $\mathrm{T}04$ & $F_{0}$ & $\mathrm{T}08$ & $F_{1}$ & $(1,0,2,3)$ \\
    $\mathrm{T}11$ & $F_{0}$ & $\mathrm{T}18$ & $F_{3}$ & $(3,0,1,2)$ \\
    $\mathrm{T}17$ & $F_{3}$ & $\mathrm{T}18$ & $F_{2}$ & $(0,1,3,2)$ \\
    $\mathrm{T}01$ & $F_{0}$ & $\mathrm{T}17$ & $F_{2}$ & $(2,0,1,3)$ \\
    $\mathrm{T}12$ & $F_{0}$ & $\mathrm{T}18$ & $F_{1}$ & $(1,0,2,3)$ \\
    $\mathrm{T}13$ & $F_{0}$ & $\mathrm{T}17$ & $F_{1}$ & $(1,0,2,3)$ \\
    $\mathrm{T}07$ & $F_{3}$ & $\mathrm{T}09$ & $F_{0}$ & $(1,2,3,0)$ \\
    $\mathrm{T}05$ & $F_{0}$ & $\mathrm{T}07$ & $F_{2}$ & $(2,0,1,3)$ \\
    $\mathrm{T}16$ & $F_{3}$ & $\mathrm{T}18$ & $F_{0}$ & $(1,2,3,0)$ \\
    $\mathrm{T}14$ & $F_{0}$ & $\mathrm{T}16$ & $F_{2}$ & $(2,0,1,3)$ \\
    $\mathrm{T}08$ & $F_{0}$ & $\mathrm{T}16$ & $F_{1}$ & $(1,0,2,3)$ \\
    $\mathrm{T}07$ & $F_{1}$ & $\mathrm{T}17$ & $F_{0}$ & $(1,0,2,3)$ \\
    $\mathrm{T}07$ & $F_{0}$ & $\mathrm{T}15$ & $F_{0}$ & $(0,1,2,3)$ \\
    $\mathrm{T}06$ & $F_{0}$ & $\mathrm{T}16$ & $F_{0}$ & $(0,1,2,3)$ \\
    \bottomrule
  \end{tabular}
\end{table}

\subsection{Boundary faces}

We now identify the faces not used in Table~C.2 and match them with the
sixteen lifted boundary triangles belonging to sheets $1$ and $2$.

\begin{lemma}\label{lem:appendix-c-boundary-faces}
The faces unused by Table~C.2 are exactly $F_2$ and $F_3$ of
$\mathrm{T}01$--$\mathrm{T}04$ and $\mathrm{T}10$--$\mathrm{T}13$.
They are the sixteen boundary faces of $X$.
\end{lemma}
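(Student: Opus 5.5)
The claim is a finite count against Table~C.2: list every face of $\mathrm{T}01$--$\mathrm{T}18$ that occurs in a pairing and check that what remains is exactly the sixteen faces $F_2,F_3$ of $\mathrm{T}01$--$\mathrm{T}04$ and $\mathrm{T}10$--$\mathrm{T}13$. The eighteen tetrahedra have $72$ faces. The twenty-eight pairings use $56$ face slots, which leaves at most $16$ unused, and exactly $16$ if no face slot is used twice.

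\textbf{Step 1: tabulate the used faces.} I would go through the rows of Table~C.2 and record, for each tetrahedron, which of $F_0,\dots,F_3$ occur. Reading the table gives:
\begin{itemize}
\item[] $\mathrm{T}05,\mathrm{T}06,\mathrm{T}07,\mathrm{T}09,\mathrm{T}14,\mathrm{T}15,\mathrm{T}16,\mathrm{T}18$: all four faces each, for $32$ faces.
\item[] $\mathrm{T}08,\mathrm{T}17$: all four faces. For example, $\mathrm{T}08$ has $F_3,F_2$ (with $\mathrm{T}09,\mathrm{T}10$), $F_1$ (with $\mathrm{T}04$) and $F_0$ (with $\mathrm{T}16$); $\mathrm{T}17$ is similar. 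This adds $8$ faces.
\item[] $\mathrm{T}01$--$\mathrm{T}04$ and $\mathrm{T}10$--$\mathrm{T}13$: only $F_0$ and $F_1$ each, adding $16$ faces.
\end{itemize}
These $32+8+16=56$ slots match $2\cdot 28$, so the tally has no repetition. This is the main check, because a face listed twice would create a singular identification and break the count. It is routine, and I would confirm it by counting occurrences of each pair (tetrahedron, face).

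\textbf{Step 2: check the permutations are consistent.} For each pairing I would verify that the permutation sends the vertex labels of the first face to the same labels on the second face. For instance, $\mathrm{T}04\,F_0=(Q_1,v_3,v_4)$ and $\mathrm{T}08\,F_1=(Q_1,v_3,v_4)$ match under $(1,0,2,3)$. This confirms that Table~C.2 glues faces with identical vertex sets, as the caption of Figure~\ref{fig:type1-solid-torus} indicates.

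\textbf{Step 3: describe the boundary faces.} Deleting vertex position $2$ or $3$ from $\mathrm{T}01$--$\mathrm{T}04$ and $\mathrm{T}10$--$\mathrm{T}13$ leaves a triangle $(P_a,Q_b,v_k)$. For example, $\mathrm{T}02$ gives $(P_1,Q_1,v_2)$ and $(P_1,Q_1,v_1)$. These are lifted base triangles $\Delta^{\,i}_{h,R}$ with $i\in\{1,2\}$.

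Using Table~B.2 for type~1, the permutation is $\mathrm{id}$ over $R_{12},R_{34}$ and $(12)$ over $R_{23},R_{41}$. With Table~B.1 this shows the sixteen faces are exactly the $8\cdot 2$ lifts over sheets $1,2$. Each appears once, consistent with Lemma~\ref{lem:appendix-b-completeness}. Since every face of a closed tetrahedral complex is either paired or on the boundary, these sixteen faces form $\partial X$.
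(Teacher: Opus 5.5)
Your proposal is correct and follows the paper's own argument. Both count $72$ faces against the $56$ face slots of the twenty-eight pairings, and both read Table~C.2 by tetrahedron: every face of $\mathrm{T}05$--$\mathrm{T}09$ and $\mathrm{T}14$--$\mathrm{T}18$ is used, together with $F_0,F_1$ of the remaining eight tetrahedra, which leaves exactly the sixteen stated faces. Your Steps~2 and~3, the label consistency of the gluings and the match with the lifts over sheets $1,2$, are correct but go beyond this lemma; the second is essentially Lemma~\ref{lem:appendix-c-boundary-bijection}.
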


\begin{proof}
The eighteen tetrahedra have seventy-two faces before gluing, and the
twenty-eight face pairs use fifty-six of them.  Reading Table~C.2 by
tetrahedron shows that it uses every face of $\mathrm{T}05$--$\mathrm{T}09$
and $\mathrm{T}14$--$\mathrm{T}18$, and $F_0,F_1$ of the remaining eight
tetrahedra.  The stated sixteen faces are therefore precisely the complement.
\end{proof}

The face corresponding to a boundary triangle $\Delta_{h_a,R}^{\,i}$ is
recovered as follows.  Use Tables~B.1--B.2 to find the $Q$-sheet index $j$
paired with $i$ over $R$ and the two banana vertices adjacent to $R$.
Table~C.1 contains a unique side
tetrahedron with first two vertices $P_i,Q_j$ and those two banana vertices.
Take $F_3$ if $v_a$ is in position $2$, and $F_2$ if it is in position $3$.

\begin{lemma}\label{lem:appendix-c-boundary-bijection}
This rule is a bijection from the sixteen lifted boundary triangles over
sheets $1$ and $2$ to the sixteen boundary faces of $X$.
\end{lemma}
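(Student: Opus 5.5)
The plan is to make the rule completely explicit on the finite data of Tables~B.1, B.2 and~C.1. Then injectivity follows by inspection, and surjectivity follows by counting against Lemma~\ref{lem:appendix-c-boundary-faces}.

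\emph{Step 1: parametrize the source.} A lifted boundary triangle over sheets $1,2$ is a triple $(h_a,R,i)$. Here $(h_a,R)$ is one of the eight rows of Table~B.1 and $i\in\{1,2\}$, which gives $8\cdot 2=16$ triangles. Equivalently, the triangle is given by a sector $R\in\{R_{12},R_{23},R_{34},R_{41}\}$, a sheet $i\in\{1,2\}$, and one of the two half-edges $h_a$ adjacent to $R$. By Table~B.1 the banana vertices adjacent to these sectors are $\{v_1,v_2\}$, $\{v_2,v_3\}$, $\{v_3,v_4\}$ and $\{v_4,v_1\}$ respectively. By the type~1 row of Table~B.2, the $Q$-index is $j=i$ over $R_{12}$ and $R_{34}$, and $j=(12)(i)$ over $R_{23}$ and $R_{41}$. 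All edge labels are $(12)$, so the banana vertex is $v_a^{12}=v_a$, and the ordered triangle is $(P_i,Q_j,v_a)$.

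\emph{Step 2: the side tetrahedron is well defined and the assignment $(R,i)\mapsto T$ is bijective.} Reading Table~C.1, the tetrahedra whose first two vertices are $P_i,Q_j$ are exactly $\mathrm{T}01$--$\mathrm{T}04$ and $\mathrm{T}10$--$\mathrm{T}13$. Matching each pair $(R,i)$ with its tetrahedron gives:
\begin{itemize}
\item[] $(R_{12},1)\mapsto\mathrm{T}02$ and $(R_{12},2)\mapsto\mathrm{T}11$;
\item[] $(R_{23},1)\mapsto\mathrm{T}12$ and $(R_{23},2)\mapsto\mathrm{T}03$;
\item[] $(R_{34},1)\mapsto\mathrm{T}04$ and $(R_{34},2)\mapsto\mathrm{T}13$;
\item[] $(R_{41},1)\mapsto\mathrm{T}01$ and $(R_{41},2)\mapsto\mathrm{T}10$.
\end{itemize}
For each pair, the listed tetrahedron is the only one in Table~C.1 whose vertex set is $\{P_i,Q_j\}$ together with the two banana vertices adjacent to $R$. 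So the rule is well defined, and it gives a bijection from the eight pairs $(R,i)$ onto these eight side tetrahedra. This check is the one place where an error in the tables could break the argument, so I expect it to be the main, purely finite, obstacle. It amounts to eight table lookups.

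\emph{Step 3: the face choice.} In each side tetrahedron, positions $2$ and $3$ hold the two banana vertices adjacent to $R$. If $v_a$ is in position $2$, the face $F_3$ omits position $3$. Its vertices are therefore $P_i,Q_j,v_a$, in the order of the lifted triangle $(P_i,Q_j,v(h_a)^{\{1,2\}})$. If $v_a$ is in position $3$, the face $F_2$ likewise has vertices $P_i,Q_j,v_a$. Thus the assigned face carries exactly the boundary marking of the triangle. Within a fixed tetrahedron, the two half-edges adjacent to $R$ are sent to the distinct faces $F_2$ and $F_3$. Combined with Step~2, this shows the map is injective.

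\emph{Step 4: surjectivity.} Every image lies among the faces $F_2,F_3$ of $\mathrm{T}01$--$\mathrm{T}04$ and $\mathrm{T}10$--$\mathrm{T}13$. By Lemma~\ref{lem:appendix-c-boundary-faces}, these are exactly the sixteen boundary faces of $X$. An injective map between two sets of sixteen elements is a bijection, which proves the lemma.
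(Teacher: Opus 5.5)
Your proposal is correct and takes essentially the same approach as the paper. Each pair $(R,i)$ determines one side tetrahedron in Table~C.1, the two half-edges adjacent to $R$ go to its two distinct boundary faces $F_2$ and $F_3$, and injectivity together with the count $16=16$ gives bijectivity. Your eight explicit lookups agree with Tables~B.1, B.2 and~C.1, and they also show that distinct pairs $(R,i)$ give distinct side tetrahedra, a point the paper leaves implicit.
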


\begin{proof}
For each sector $R$ and each $i\in\{1,2\}$, Table~C.1 has one such side
tetrahedron, and its two boundary faces contain the two different
half-edge vertices adjacent to $R$.  The rule is therefore injective; both
sets have $8\cdot2=16$ elements, so it is bijective.
\end{proof}

\subsection{Deck involution and local links}

We check that the evident exchange of the two sheets is simplicial and has
the local form of simple two-fold branching along the two branch edges.

Define an involution $\iota$ on $X$ by
\[
  P_1\leftrightarrow P_2,\qquad
  Q_1\leftrightarrow Q_2,\qquad
  v_a\longmapsto v_a,\qquad
  \mathrm{T}0k\leftrightarrow\mathrm{T}1k
  \quad(1\leq k\leq9).
\]
It preserves the ordered positions and all face pairings in Table~C.2.  Its
fixed set consists of the two proper edges $v_1v_3$ and $v_2v_4$.

\begin{table}[H]
  \centering
  \small
  \setlength{\tabcolsep}{4pt}
  \renewcommand{\arraystretch}{1.08}
  \caption{Links of the fixed edges.}
  \label{tab:appendix-c-fixed-links}
  \begin{tabular}{@{}c l l@{}}
    \toprule
    Fixed edge & Cyclic tetrahedron list in the interior link & Action \\
    \midrule
    $v_{1}v_{3}$ & $\mathrm{T}07,\mathrm{T}09,\mathrm{T}08,\mathrm{T}16,\mathrm{T}18,\mathrm{T}17$ & shift by three \\
    $v_{2}v_{4}$ & $\mathrm{T}05,\mathrm{T}06,\mathrm{T}16,\mathrm{T}14,\mathrm{T}15,\mathrm{T}07$ & shift by three \\
    \bottomrule
  \end{tabular}
\end{table}

In Table~C.4, an edge incident to the endpoint denotes the corresponding
direction in its vertex link.

\begin{table}[H]
  \centering
  \footnotesize
  \setlength{\tabcolsep}{4pt}
  \renewcommand{\arraystretch}{1.08}
  \caption{Links of the branch endpoints.}
  \label{tab:appendix-c-endpoint-links}
  \begin{tabular}{@{}c c l l@{}}
    \toprule
    Endpoint & Fixed direction & Boundary $4$-cycle of directions & Action \\
    \midrule
    $v_{1}$ & $v_{1}v_{3}$ & $P_{1}v_{1}, Q_{2}v_{1}, P_{2}v_{1}, Q_{1}v_{1}$ & shift by two \\
    $v_{3}$ & $v_{1}v_{3}$ & $P_{2}v_{3}, Q_{1}v_{3}, P_{1}v_{3}, Q_{2}v_{3}$ & shift by two \\
    $v_{2}$ & $v_{2}v_{4}$ & $P_{1}v_{2}, Q_{1}v_{2}, P_{2}v_{2}, Q_{2}v_{2}$ & shift by two \\
    $v_{4}$ & $v_{2}v_{4}$ & $P_{1}v_{4}, Q_{2}v_{4}, P_{2}v_{4}, Q_{1}v_{4}$ & shift by two \\
    \bottomrule
  \end{tabular}
\end{table}

The six-cycles in Table~C.3 are exchanged by a shift of three, and the
boundary four-cycles in Table~C.4 by a shift of two.  Hence the interiors and
endpoints of both fixed edges have the standard local model of simple
two-fold branching.

\subsection{Quotient and PL identification}

It remains to identify the quotient and its branch set.  The twenty face
pairs internal to the two nine-tetrahedron copies descend in pairs to ten
face pairings of the base banana.  The remaining eight crosswise pairings
form the four orbits in Table~C.5.  Thus $X/\iota$ has fourteen internal face
pairings and eight boundary faces, exactly those of the nine-tetrahedron
base banana in Figure~12.

\begin{table}[H]
  \centering
  \footnotesize
  \setlength{\tabcolsep}{4pt}
  \renewcommand{\arraystretch}{1.08}
  \caption{Exceptional quotient face orbits.}
  \label{tab:appendix-c-quotient}
  \begin{tabular}{@{}c c l@{}}
    \toprule
    Orbit & Geometric face & Crosswise lift pairings \\
    \midrule
    C1 & $Q,v_{1},v_{4}$ & $\mathrm{T}08:F_{2}  \leftrightarrow  \mathrm{T}10:F_{0}; \mathrm{T}01:F_{0}  \leftrightarrow  \mathrm{T}17:F_{2}$ \\
    C2 & $P,v_{2},v_{3}$ & $\mathrm{T}06:F_{3} \leftrightarrow \mathrm{T}12:F_{1}; \mathrm{T}03:F_{1} \leftrightarrow \mathrm{T}15:F_{3}$ \\
    C3 & $v_{2},v_{3},v_{4}$ & $\mathrm{T}07:F_{0} \leftrightarrow \mathrm{T}15:F_{0}; \mathrm{T}06:F_{0} \leftrightarrow \mathrm{T}16:F_{0}$ \\
    C4 & $v_{1},v_{3},v_{4}$ & $\mathrm{T}08:F_{0} \leftrightarrow \mathrm{T}16:F_{1}; \mathrm{T}07:F_{1} \leftrightarrow \mathrm{T}17:F_{0}$ \\
    \bottomrule
  \end{tabular}
\end{table}

The two triangles
\[
  \begin{aligned}
    D_{24}&:=\mathrm{T}05:F_1=\mathrm{T}06:F_2=[P,v_2,v_4],\\
    D_{13}&:=\mathrm{T}08:F_3=\mathrm{T}09:F_2=[Q,v_1,v_3]
  \end{aligned}
\]
are disjoint.  Their boundaries show that the two fixed edges are
simultaneously boundary-parallel, so the quotient branch set is the trivial
two-string tangle used in the type~1 base banana.

\begin{proposition}[Boundary-marked local PL identification]
\label{prop:appendix-c-identification}
The complex $X$ is PL homeomorphic to the component of the lifted type~1
banana indexed by $\{1,2\}$.  The homeomorphism preserves the $P$-vertices,
$Q$-vertices, banana vertices, sheet labels, and the sixteen boundary
triangles.
\end{proposition}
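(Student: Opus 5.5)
The plan is to realize $X$ as a genuine two-fold branched cover of the nine-tetrahedron base banana and then compare it with the pullback component, which Lemma~\ref{lem:local-component-topology} identifies as the double cover of a $3$-ball branched over a trivial two-string tangle. The involution $\iota$ of Section~C.4 provides the deck transformation. It is simplicial, it preserves the ordered positions and the pairings of Table~C.2, and its fixed set is the two edges $v_1v_3$ and $v_2v_4$.

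First, I will check that $X$ is a $3$-manifold with boundary and that $X\to X/\iota$ is a simple two-fold branched covering. Away from the fixed edges, $\iota$ acts freely, since it exchanges $\mathrm{T}0k$ and $\mathrm{T}1k$ and fixes no other simplex. Along the fixed edges, Tables~C.3 and~C.4 show that the interior links are six-cycles rotated by three. The endpoint links are discs whose boundary four-cycles are rotated by two, with a fixed interior direction. These are exactly the local models of $\mathbb{C}\times\mathbb{R}\to\mathbb{C}\times\mathbb{R}$, $(z,t)\mapsto(z^2,t)$, and of its restriction to a half-space at the endpoints. Consequently $X$ is a PL manifold, and the quotient map is a two-fold cover branched exactly over the images of $v_1v_3$ and $v_2v_4$.

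Next, I will identify $X/\iota$ with the base banana $B^{(1)}$ together with its branch arcs. By Section~C.5, the ten descended internal pairings and the four exceptional orbits C1--C4 give fourteen pairings on the nine quotient tetrahedra. I will compare these combinatorially with the subdivision in Figure~\ref{fig:base-banana-subdivision}, and I expect to match them tetrahedron by tetrahedron via $P_i\mapsto P$, $Q_i\mapsto Q$, $v_a\mapsto v_a$. The disjoint triangles $D_{24}$ and $D_{13}$ then show that the branch arcs are simultaneously boundary-parallel. So the branch set is the trivial tangle, sitting in the same position as the type~1 branch arcs in $B^{(1)}$.

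Finally, I will lift this identification. A two-fold cover branched over a given tangle is determined by its monodromy on the complement, and here every meridian goes to the nontrivial element. Both $X$ and the $\{1,2\}$ component of $\widetilde B^{(1)}$ are therefore the double branched cover of $(B^{(1)},\text{arcs})$ with meridians labelled $(12)$. The quotient identification lifts to a PL homeomorphism commuting with the projections. There are two lifts, differing by $\iota$, and I will choose the one that sends $P_1$ to the sheet-$1$ lift of $P$.

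The main obstacle is making the lift respect all the boundary labels, not just $P_1$. I will read the sheet labels of $Q_1,Q_2$ off the side tetrahedra $\mathrm{T}01$--$\mathrm{T}04$ and $\mathrm{T}10$--$\mathrm{T}13$. I will then check these labels against the alternating transport of Table~B.2 ($\mathrm{id},(12),\mathrm{id},(12)$ on $R_{12},R_{23},R_{34},R_{41}$). Together with Lemma~\ref{lem:appendix-c-boundary-bijection}, this shows that the lift carries each face $\Delta^{\,i}_{h_a,R}$ to the corresponding lifted boundary triangle. The banana vertices $v_a=v_a^{12}$ are fixed by $\iota$, so they are matched automatically.
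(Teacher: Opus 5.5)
Your proposal is correct and takes the same route as the paper. Both arguments use $\iota$ together with Tables~C.3--C.5 to exhibit $X\to X/\iota$ as the simple two-fold branched cover of the nine-tetrahedron base banana along its two trivial strands, invoke uniqueness of that cover for the $(12)$ monodromy (Lemma~\ref{lem:local-component-topology}), and fix the boundary marking with Lemmas~C.1--C.2 and Table~B.2; you spell out details the paper leaves implicit, such as choosing the lift with $P_1$ sent to the sheet-$1$ lift of $P$. One ordering point: Tables~C.3--C.4 only control neighbourhoods of the fixed edges, so that $X$ is a manifold away from them and that the endpoint links are discs should be derived after you identify $X/\iota$ with the banana, not before.
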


\begin{proof}
The involution and Tables~C.3--C.5 identify $X\to X/\iota$ with the connected
simple two-fold branched cover of the base banana along its two labelled
trivial strands.  This is the component considered in Lemma~6.2; its cover
is determined by the same transposition monodromy.  Lemmas~C.1--C.2 identify
the sixteen boundary triangles and all named vertices with the corresponding
lifted boundary data, giving the required boundary-marked PL homeomorphism.
\end{proof}

\bibliographystyle{unsrt}
\begingroup
\emergencystretch=3em
\hbadness=2000
\bibliography{references}

@article{hatakenaka-2010,
  author  = {Hatakenaka, Eri},
  title   = {Invariants of 3-manifolds derived from covering presentations},
  journal = {Mathematical Proceedings of the Cambridge Philosophical Society},
  volume  = {149},
  number  = {2},
  year    = {2010},
  pages   = {263--295},
  doi     = {10.1017/S0305004110000198}
}

@misc{bobtcheva-piergallini-2004,
  author       = {Bobtcheva, Ivelina and Piergallini, Riccardo},
  title        = {Covering moves and {Kirby} calculus},
  year         = {2004},
  eprint       = {math/0407032},
  archiveprefix = {arXiv},
  note         = {arXiv:math/0407032 [math.GT]},
  doi          = {10.48550/arXiv.math/0407032}
}

@article{turaev-viro-1992,
  author  = {Turaev, Vladimir G. and Viro, Oleg Y.},
  title   = {State sum invariants of 3-manifolds and quantum {$6j$}-symbols},
  journal = {Topology},
  volume  = {31},
  number  = {4},
  year    = {1992},
  pages   = {865--902},
  doi     = {10.1016/0040-9383(92)90015-A}
}

@article{burton-maria-spreer-2018,
  author  = {Burton, Benjamin A. and Maria, Cl{\'e}ment and Spreer, Jonathan},
  title   = {Algorithms and complexity for {Turaev--Viro} invariants},
  journal = {Journal of Applied and Computational Topology},
  volume  = {2},
  number  = {1--2},
  year    = {2018},
  pages   = {33--53},
  doi     = {10.1007/s41468-018-0016-2}
}

@article{roberts-1995,
  author  = {Roberts, Justin},
  title   = {Skein theory and {Turaev--Viro} invariants},
  journal = {Topology},
  volume  = {34},
  number  = {4},
  year    = {1995},
  pages   = {771--787},
  doi     = {10.1016/0040-9383(94)00053-0}
}

@book{kauffman-lins-1994,
  author    = {Kauffman, Louis H. and Lins, Sostenes L.},
  title     = {Temperley--Lieb Recoupling Theory and Invariants of 3-Manifolds},
  series    = {Annals of Mathematics Studies},
  volume    = {134},
  publisher = {Princeton University Press},
  year      = {1994},
  doi       = {10.1515/9781400882533}
}

@article{turaev-1992-shadow,
  author  = {Turaev, Vladimir G.},
  title   = {Shadow links and face models of statistical mechanics},
  journal = {Journal of Differential Geometry},
  volume  = {36},
  number  = {1},
  year    = {1992},
  pages   = {35--74},
  doi     = {10.4310/jdg/1214448442}
}

@article{carbone-2000,
  author  = {Carbone, Gaspare},
  title   = {{Turaev--Viro} invariant and {$3nj$} symbols},
  journal = {Journal of Mathematical Physics},
  volume  = {41},
  number  = {5},
  year    = {2000},
  pages   = {3068--3085},
  doi     = {10.1063/1.533292}
}

@article{benedetti-petronio-1995,
  author  = {Benedetti, Riccardo and Petronio, Carlo},
  title   = {A finite graphic calculus for 3-manifolds},
  journal = {Manuscripta Mathematica},
  volume  = {88},
  number  = {3},
  year    = {1995},
  pages   = {291--310},
  doi     = {10.1007/BF02567824}
}

@article{benedetti-petronio-1996,
  author  = {Benedetti, Riccardo and Petronio, Carlo},
  title   = {On {Roberts'} proof of the {Turaev--Walker} theorem},
  journal = {Journal of Knot Theory and Its Ramifications},
  volume  = {5},
  number  = {4},
  year    = {1996},
  pages   = {427--439},
  doi     = {10.1142/S0218216596000266}
}

@article{hatakenaka-nosaka-2012,
  author  = {Hatakenaka, Eri and Nosaka, Takefumi},
  title   = {Some topological aspects of 4-fold symmetric quandle invariants of 3-manifolds},
  journal = {International Journal of Mathematics},
  volume  = {23},
  number  = {7},
  year    = {2012},
  pages   = {1250064},
  doi     = {10.1142/S0129167X12500644}
}

@article{nosaka-2014,
  author  = {Nosaka, Takefumi},
  title   = {On third homologies of groups and of quandles via the {Dijkgraaf--Witten} invariant and {Inoue--Kabaya} map},
  journal = {Algebraic \& Geometric Topology},
  volume  = {14},
  number  = {5},
  year    = {2014},
  pages   = {2655--2691},
  doi     = {10.2140/agt.2014.14.2655}
}

@article{ishii-oshiro-2022,
  author  = {Ishii, Atsushi and Oshiro, Kanako},
  title   = {Quandle twisted {Alexander} invariants},
  journal = {Osaka Journal of Mathematics},
  volume  = {59},
  number  = {3},
  year    = {2022},
  pages   = {683--702},
  doi     = {10.18910/88492}
}

@book{rourke-sanderson-1982,
  author    = {Rourke, Colin P. and Sanderson, Brian J.},
  title     = {Introduction to Piecewise-Linear Topology},
  series    = {Springer Study Edition},
  publisher = {Springer-Verlag},
  address   = {Berlin},
  year      = {1982},
  doi       = {10.1007/978-3-642-81735-9}
}

@article{lackenby-2021,
  author  = {Lackenby, Marc},
  title   = {Links with splitting number one},
  journal = {Geometriae Dedicata},
  volume  = {214},
  number  = {1},
  year    = {2021},
  pages   = {319--351},
  doi     = {10.1007/s10711-021-00618-x}
}
\endgroup

\end{document}